\documentclass[a4paper,10pt]{amsart}
\usepackage[top=0.8in, bottom=0.8in, left=1.25in, right=1.25in]{geometry}
\usepackage{amsmath}
\usepackage{amsfonts}
\usepackage{amssymb}
\usepackage{graphicx}
\usepackage{mathrsfs}
\usepackage{pb-diagram}
\usepackage{epstopdf}
\usepackage{CJK}
\usepackage{slashed}

\usepackage{amscd,amsthm,curves,enumerate,latexsym,multibox}
\usepackage{xypic}
\usepackage[all]{xy}
\usepackage{epstopdf}

\newtheorem{theorem}{Theorem}[section]
\newtheorem{lemma}[theorem]{Lemma}
\newtheorem{proposition}[theorem]{Proposition}
\newtheorem{corollary}[theorem]{Corollary}
\theoremstyle{definition}
\newtheorem{definition}[theorem]{Definition}
\newtheorem{remark}[theorem]{Remark}
\newtheorem{example}[theorem]{Example}

\begin{document}
\title{ Orientability for gauge theories on Calabi-Yau manifolds  }
\author{Yalong Cao}
\address{The Institute of Mathematical Sciences and Department of Mathematics, The Chinese University of Hong Kong, Shatin, Hong Kong}
\email{ylcao@math.cuhk.edu.hk}

\author{Naichung Conan Leung}
\address{The Institute of Mathematical Sciences and Department of Mathematics, The Chinese University of Hong Kong, Shatin, Hong Kong}
\email{leung@math.cuhk.edu.hk}

%\author{Yalong Cao and Naichung Conan Leung}
%\date{  }
%\address{The Institute of Mathematical Sciences and Department of Mathematics, The Chinese University of Hong Kong, Shatin, Hong Kong}
%\email{ylcao@math.cuhk.edu.hk}

%\author{Naichung Conan Leung}
%\address{The Institute of Mathematical Sciences and Department of Mathematics, The Chinese University of Hong Kong, Shatin, Hong Kong}
%\email{leung@math.cuhk.edu.hk}

\maketitle

\begin{abstract}
We study orientability issues of moduli spaces from gauge theories on Calabi-Yau manifolds. Our results generalize and strengthen those for Donaldson-Thomas theory on Calabi-Yau manifolds of dimensions 3 and 4.
We also prove a corresponding result in the relative situation
which is relevant to the gluing formula in DT theory.
\end{abstract}
${}$ \\
\textbf{MSC 2010}: 14N35 14J32 53C07 81T13
%\tableofcontents

\section{Introduction}
Donaldson invariants count anti-self-dual connections on closed oriented 4-manifolds \cite{d}. The definition requires an orientablity result
proved by Donaldson in \cite{d1}. Indeed, Donaldson theory fits into the 3-dimensional TQFT structure in the sense of Atiyah \cite{atiyah1}.
In particular, relative Donaldson invariants for $(X,Y=\partial X)$ take values in the
instanton Chern-Simons-Floer (co)homology $HF^{*}_{CS}(Y)$ \cite{donaldson, witten}.
The Euler characteristic of $HF^{*}_{CS}(Y)$ is the Casson invariant which counts flat connections on a closed 3-manifold $Y$.
%When we degenerate $X_{\mathbb{R}}^{4}$ into the union of two 4-manifolds with boundaries gluing along a closed 3-manifold, i.e. $X_{\mathbb{R}}^{+}\cup_{Y_{\mathbb{R}}^{3}}X_{\mathbb{R}}^{-}$, relative Donaldson invariants for $X_{\mathbb{R}}^{\pm}$ define elements in the instanton Chern-Simons-Floer (co)homology $HF^{*}_{CS}(Y_{\mathbb{R}}^{3})$ of $Y_{\mathbb{R}}^{3}$ and their pairing in $HF^{*}_{CS}(Y_{\mathbb{R}}^{3})$ gives the Donaldson invariant of $X_{\mathbb{R}}^{4}$ \cite{donaldson}.

As was proposed by Donaldson and Thomas \cite{dt}, we are interested in the \textit{complexification} of the above theory.
Namely, we consider holomorphic vector bundles (or general coherent sheaves) over Calabi-Yau manifolds \cite{yau}.
The complex analogs of (i) Donaldson invariants, (ii) Chern-Simons-Floer (co)homology $HF^{*}_{CS}(Y)$, and (iii) Casson invariants
%\begin{equation}(i) Donaldson invariants, (ii) Chern-Simons-Floer (co)homology HF^{*}_{CS}(Y), and (iii) Casson invariants \end{equation}
are (i) $DT_{4}$ invariants, (ii) $DT_{3}$ (co)homology $H^{*}_{DT_{3}}(Y)$, and (iii) $DT_{3}$ invariants.

As a complexification of Casson invariants, Thomas defined Donaldson-Thomas invariants for Calabi-Yau 3-folds \cite{th}.
$DT_{3}$ invariants for ideal sheaves of curves are related to many other interesting subjects including Gopakumar-Vafa conjecture on
BPS numbers in string theory \cite{gv, hosono, kl} and MNOP conjecture \cite{mnop, mnop2, moop, pandpixton}
which relates $DT_{3}$ invariants and Gromov-Witten invariants. The generalization of $DT_{3}$ invariants to
count strictly semi-stable sheaves is due to Joyce and Song \cite{js} using Behrend's result \cite{behrend}.
Kontsevich and Soibelman proposed generalized as well as motivic $DT$ theory for Calabi-Yau 3-categories \cite{ks},
which was later studied by Behrend, Bryan and Szendr\"{o}i \cite{bbs} for Hilbert schemes of points.
The wall-crossing formula \cite{ks, js} is an important structure for Bridgeland's stability condition \cite{bridgeland} and
Pandharipande-Thomas invariants \cite{pt, toda}.

As a complexification of Chern-Simons-Floer theory, Brav, Bussi, Dupont, Joyce and Szendroi \cite{bbdjs}, Kiem and Li \cite{kl}
recently defined a cohomology theory on Calabi-Yau 3-folds whose Euler characteristic is the $DT_{3}$ invariant.
The point is that moduli spaces of simple sheaves on Calabi-Yau 3-folds are locally critical points of holomorphic functions \cite{bbj, js},
and we could consider perverse sheaves of vanishing cycles of these functions. They glued these local perverse sheaves and
defined its hypercohomology as $DT_{3}$ cohomology. In general, gluing these perverse sheaves requires
a square root of the determinant line bundle of the moduli space. Nekrasov and Okounkov proved its existence in \cite{no}.
The square root is called an orientation data if it is furthermore compatible with wall-crossing (or Hall algebra structure) \cite{ks}
whose existence was proved by Hua on simply-connected torsion-free $CY_{3}$ \cite{hua1}.

As a complexification of Donaldson theory, Borisov and Joyce \cite{bj} and the authors \cite{cao, caoleung} developed $DT_{4}$
invariants (or 'holomorphic Donaldson invariants') which count stable sheaves on Calabi-Yau 4-folds. To define the invariants,
we need an orientablity result, which was solved by the authors in \cite{caoleung} for Calabi-Yau 4-fold $X$ which satisfies $H^{odd}(X,\mathbb{Z})=0$
(for instance, complete intersections in smooth toric varieties satisfy this condition).
%Later, we generalized this result to torsion-free (i.e. homologies are torison-free) Calabi-Yau 4-folds \cite{caoleung2}.

In this paper, we show that all these orientability results have their origin in spin geometry \cite{adams, lawson}, and then
generalize and strengthen them to Calabi-Yau manifolds of any dimension.  \\

Let us start with a compact spin manifold $X$ of even dimension and a (Hermitian) complex vector bundle $(E,h)\rightarrow X$.
Given an unitary connection $A$ on $E$, one can define the twisted Dirac operator
\begin{equation}\slashed{D}_{A^{*}\otimes A}: \Gamma(\slashed{S}^{+}_{\mathbb{C}}(X)\otimes EndE)\rightarrow
\Gamma(\slashed{S}^{-}_{\mathbb{C}}(X)\otimes EndE) \nonumber \end{equation}
following Theorem 13.10 of \cite{lawson}. $[ker(\slashed{D}_{A^{*}\otimes A})-coker(\slashed{D}_{A^{*}\otimes A})]$
exists as an element in the $K$-theory $K(pt)$ of one point, and there is a family version of the above construction
over the space $\widetilde{\mathcal{B}}_{X}$ of gauge equivalent classes of framed unitary connections on $E$.
The index bundle
\begin{equation} Ind(\slashed{\mathbb{D}}_{End\mathcal{E}})\in K(\widetilde{\mathcal{B}}_{X}), \nonumber \end{equation}
exists \cite{as4} whose determinant $\mathcal{L}_{\mathbb{C}}=det(Ind(\slashed{\mathbb{D}}_{End\mathcal{E}}))$
is a complex line bundle over $\widetilde{\mathcal{B}}_{X}$. This determinant line bundle has some remarkable properties depending on the
dimension of $X$. To explain that, we first recall the following standard facts about spin geometry (see also Theorem \ref{rep of spin gp}).
\begin{lemma}(\cite{adams}, \cite{lawson})
Let $\slashed{S}^{\pm}_{\mathbb{C}}(X)$ be the complex spinor bundles of an even dimensional spin manifold $X$. Then \\
(1) if $dimX=8k$, there exists real spinor bundles $\slashed{S}^{\pm}(X)$ such that
$\slashed{S}^{\pm}_{\mathbb{C}}(X)=\slashed{S}^{\pm}(X)\otimes_{\mathbb{R}}\mathbb{C}$; \\
(2) if $dimX=4k+2$, $\slashed{S}^{+}_{\mathbb{C}}(X)\cong(\slashed{S}^{-}_{\mathbb{C}}(X))^{*}$ as Clifford bundles.
\end{lemma}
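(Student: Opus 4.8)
The plan is to reduce both statements to the representation theory of $\mathrm{Spin}(n)$, $n=\dim X$, and then globalize. Since $\slashed{S}^{\pm}_{\mathbb{C}}(X)$ are the bundles associated to the spin principal bundle of $X$ through the complex half-spinor representations $\Delta^{\pm}$ of $\mathrm{Spin}(n)$, any $\mathrm{Spin}(n)$-equivariant structure on $\Delta^{\pm}$ produces the corresponding structure on $\slashed{S}^{\pm}_{\mathbb{C}}(X)$ by functoriality of the associated-bundle construction. So I would first work entirely at the level of $\Delta=\Delta^{+}\oplus\Delta^{-}$, the irreducible module of $\mathbb{C}l_{n}$, with the complex volume element $\omega_{\mathbb{C}}=i^{\,n/2}e_{1}\cdots e_{n}$ satisfying $\omega_{\mathbb{C}}^{2}=1$ and acting as $\pm1$ on $\Delta^{\pm}$.

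The key construction is a single conjugate-linear operator $J$ on $\Delta$ commuting with Clifford multiplication by every $v\in\mathbb{R}^{n}$. Its existence follows from the structure theorem for real Clifford algebras via Bott periodicity $Cl_{m+8}\cong Cl_{m}\otimes_{\mathbb{R}}M_{16}(\mathbb{R})$: for every even $n$ the algebra $Cl_{n}$ is a real or quaternionic matrix algebra, so the complex irreducible $\Delta$ admits a Clifford-commuting $J$ with $J^{2}=+1$ (real type) or $J^{2}=-1$ (quaternionic type). Because $J$ is conjugate-linear and commutes with $e_{1}\cdots e_{n}$, a one-line computation on the factor $i^{\,n/2}$ gives $J\omega_{\mathbb{C}}=(-1)^{n/2}\omega_{\mathbb{C}}J$. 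These two facts — the sign of $J^{2}$ and the commutation with $\omega_{\mathbb{C}}$ — drive both parts.

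For (1), $n=8k\equiv0\pmod8$, so $Cl_{n}\cong M_{N}(\mathbb{R})$ is of real type and $J^{2}=+1$; moreover $n/2=4k$ is even, so $J\omega_{\mathbb{C}}=\omega_{\mathbb{C}}J$ and $J$ preserves each eigenspace $\Delta^{\pm}$. Then $J|_{\Delta^{\pm}}$ is a genuine real structure, its fixed set $\slashed{S}^{\pm}$ is a real form with $\slashed{S}^{\pm}\otimes_{\mathbb{R}}\mathbb{C}=\Delta^{\pm}$, and since $J$ commutes with all $e_{i}$ it is $\mathrm{Spin}(n)$-equivariant, so the associated real bundles $\slashed{S}^{\pm}(X)$ satisfy $\slashed{S}^{\pm}_{\mathbb{C}}(X)=\slashed{S}^{\pm}(X)\otimes_{\mathbb{R}}\mathbb{C}$. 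For (2), $n=4k+2$ makes $n/2$ odd, so $J\omega_{\mathbb{C}}=-\omega_{\mathbb{C}}J$ and $J$ now interchanges the eigenspaces, giving a conjugate-linear isomorphism $J\colon\Delta^{+}\xrightarrow{\sim}\Delta^{-}$, i.e.\ $\Delta^{+}\cong\overline{\Delta^{-}}$. Using that the spin representations are unitary ($\overline{\Delta^{-}}\cong(\Delta^{-})^{*}$) yields $\Delta^{+}\cong(\Delta^{-})^{*}$; and as $J$ intertwines the Clifford actions, the induced pairing $\Delta^{+}\otimes\Delta^{-}\to\mathbb{C}$ is Clifford-compatible, so after globalizing this is an isomorphism $\slashed{S}^{+}_{\mathbb{C}}(X)\cong(\slashed{S}^{-}_{\mathbb{C}}(X))^{*}$ of Clifford bundles.

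I expect the only real obstacle to be the period-$8$ sign bookkeeping that produces $J$: namely pinning down $J^{2}=+1$ in the $n\equiv0\pmod8$ case (as opposed to the quaternionic type at $n\equiv4$) and verifying the commutation $J\omega_{\mathbb{C}}=(-1)^{n/2}\omega_{\mathbb{C}}J$ together with the Clifford-equivariance of the pairing in (2). These are exactly the data encoded by the classification of $Cl_{n}$, so the substantive step is to read $J$ off the explicit real or quaternionic structure and check these signs; the transition from representations to bundles is then automatic once the spin structure is fixed.
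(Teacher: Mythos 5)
Your proposal is correct and follows essentially the same route as the paper: the paper likewise reduces the lemma to the representation-theoretic classification of the half-spin representations (cited as Theorem 4.6 of Adams and reproduced as Theorem \ref{rep of spin gp} in the appendix) and then globalizes through the associated-bundle construction $\slashed{S}^{\pm}_{\mathbb{C}}(X)=P_{Spin}(TX)\times_{Spin(n)}\Delta^{\pm}_{\mathbb{C}}$. The only difference is that you unwind the citation: your construction of a Clifford-commuting conjugate-linear $J$ from the real/quaternionic type of $Cl_{n}$, together with the sign $J\omega_{\mathbb{C}}=(-1)^{n/2}\omega_{\mathbb{C}}J$, is the standard proof of Adams' theorem, and it has the side benefit of exhibiting explicitly the Clifford-equivariance of the duality in (2), which the paper asserts and later relies on in Theorem \ref{thm 8k+2}.
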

From this lemma, we can obtain corresponding structures on $\mathcal{L}_{\mathbb{C}}$, i.e. \\
(1) if $dimX=8k$, there exists a real line bundle $\mathcal{L}_{\mathbb{R}}$ such that
$\mathcal{L}_{\mathbb{C}}\cong\mathcal{L}_{\mathbb{R}}\otimes_{\mathbb{R}}\mathbb{C}$.
In other words, there exists a non-degenerate quadratic form $Q$ on $\mathcal{L}_{\mathbb{C}}$ with
\begin{equation}Q: \mathcal{L}_{\mathbb{C}}\otimes\mathcal{L}_{\mathbb{C}}\cong \mathbb{C}\times\widetilde{\mathcal{B}}_{X}. \nonumber \end{equation}
(2) If $dimX=4k+2$, the extended determinant line bundle
$\mathcal{L}_{\mathbb{C}}\rightarrow\widetilde{\mathcal{B}}_{X}\times\widetilde{\mathcal{B}}_{X}$ (see Section 3)
satisfies\footnote{This follows from a brilliant idea due to
Maulik, Nekrasov and Okounkov \cite{no}.}
\begin{equation}\sigma^{*}\mathcal{L}_{\mathbb{C}}\cong\mathcal{L}_{\mathbb{C}}, \nonumber \end{equation}
where
\begin{equation}\sigma: \widetilde{\mathcal{B}}_{X}\times \widetilde{\mathcal{B}}_{X}\rightarrow
\widetilde{\mathcal{B}}_{X}\times \widetilde{\mathcal{B}}_{X}, \nonumber \end{equation}
\begin{equation}\sigma([A_{1}],[A_{2}])=([A_{2}],[A_{1}]). \nonumber \end{equation}
Furthermore, we will show the following orientability result.
\begin{theorem}(Theorem \ref{ori even}, Theorem \ref{ori odd}) ${}$ \\
Let $X$ be a compact spin manifold of even dimension, and $(E,h)$ be a Hermitian complex vector bundle. Then \\
(1) if $dimX=8k$, the structure group of $(\mathcal{L}_{\mathbb{C}},Q)$ can be reduced to $SO(1,\mathbb{C})$, i.e.
the corresponding real line bundle $\mathcal{L}_{\mathbb{R}}$ is trivial, provided that $H_{odd}(X,\mathbb{Z})=0$ ; \\
(2) if $dimX=4k+2$, $\mathcal{L}_{\mathbb{C}}$ has natural\footnote{See Theorem \ref{ori odd} for the precise meaning.}
choices of square roots parametrized by $Hom(H^{odd}(X,\mathbb{Z}),\mathbb{Z}_{2})$.
\end{theorem}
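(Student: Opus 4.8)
The plan is to reduce both statements to the low-degree topology of $\widetilde{\mathcal{B}}_X$ together with a single characteristic class of $\mathcal{L}_{\mathbb{C}}$. First I would use that the affine space of framed connections is contractible and the based gauge group $\mathcal{G}_0$ acts freely, so $\widetilde{\mathcal{B}}_X \simeq B\mathcal{G}_0$; consequently $\pi_1(\widetilde{\mathcal{B}}_X) \cong \pi_0(\mathcal{G}_0)$, which in the stable range is the reduced K-theory $K^1(X)$, whose associated graded (via the Atiyah--Hirzebruch spectral sequence) is $H^{odd}(X,\mathbb{Z})$. Taking $\mathbb{Z}_2$-duals yields $H^1(\widetilde{\mathcal{B}}_X,\mathbb{Z}_2) \cong \mathrm{Hom}(H^{odd}(X,\mathbb{Z}),\mathbb{Z}_2)$, which is the engine behind both cases. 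I would also record that, since $X$ is closed, oriented and of even dimension, Poincar\'e duality gives $H_{odd}(X,\mathbb{Z}) \cong H^{odd}(X,\mathbb{Z})$, so the two vanishing hypotheses appearing in the theorem are interchangeable.

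For part (1), the real spinor structure in dimension $8k$ gives $\mathcal{L}_{\mathbb{C}} \cong \mathcal{L}_{\mathbb{R}} \otimes_{\mathbb{R}} \mathbb{C}$, so that $\mathcal{L}_{\mathbb{R}}$ is trivial precisely when its obstruction class $w_1(\mathcal{L}_{\mathbb{R}}) \in H^1(\widetilde{\mathcal{B}}_X,\mathbb{Z}_2)$ vanishes. By the reduction above, the hypothesis $H_{odd}(X,\mathbb{Z})=0$ is equivalent to $H^{odd}(X,\mathbb{Z})=0$, which kills $K^1(X)$ and hence $\pi_1(\widetilde{\mathcal{B}}_X)$; therefore $H^1(\widetilde{\mathcal{B}}_X,\mathbb{Z}_2)=0$ and $w_1(\mathcal{L}_{\mathbb{R}})$ vanishes automatically. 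An orientable real line bundle is trivial, which is exactly the reduction of the structure group to the trivial group $SO(1,\mathbb{C})$, with the quadratic form $Q$ furnishing the trivialization.

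For part (2), a square root of $\mathcal{L}_{\mathbb{C}}$ exists if and only if $c_1(\mathcal{L}_{\mathbb{C}})$ is divisible by $2$ in $H^2(\widetilde{\mathcal{B}}_X,\mathbb{Z})$, and I would obtain this divisibility from the self-duality special to dimension $4k+2$. The isomorphism $\slashed{S}^+_{\mathbb{C}} \cong (\slashed{S}^-_{\mathbb{C}})^*$ together with the trace self-duality $(\mathrm{End}\mathcal{E})^* \cong \mathrm{End}\mathcal{E}$ equips the index bundle with a natural pairing; passing to the extended determinant line bundle on $\widetilde{\mathcal{B}}_X \times \widetilde{\mathcal{B}}_X$ and invoking the symmetry $\sigma^*\mathcal{L}_{\mathbb{C}} \cong \mathcal{L}_{\mathbb{C}}$ in the spirit of Nekrasov--Okounkov realizes the restriction of $\mathcal{L}_{\mathbb{C}}$ to the diagonal as a tensor square, hence produces a square root. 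Once a single square root is chosen, the set of all of them is a torsor over the group of $2$-torsion line bundles $H^1(\widetilde{\mathcal{B}}_X,\mathbb{Z}_2) \cong \mathrm{Hom}(H^{odd}(X,\mathbb{Z}),\mathbb{Z}_2)$, and the naturality of the construction pins down the asserted family of canonical choices.

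The hard part will be the existence statement in part (2): upgrading the abstract symmetry $\sigma^*\mathcal{L}_{\mathbb{C}} \cong \mathcal{L}_{\mathbb{C}}$ into a canonically defined square root rather than a mere divisibility of $c_1$. One must check that the Nekrasov--Okounkov pairing on the index bundle is compatible and non-degenerate, so that it genuinely exhibits $\mathcal{L}_{\mathbb{C}}$ on the diagonal as a square and the root is determined up to exactly the stated torsor. A secondary but still delicate point, underlying both parts, is the precise isomorphism $H^1(\widetilde{\mathcal{B}}_X,\mathbb{Z}_2) \cong \mathrm{Hom}(H^{odd}(X,\mathbb{Z}),\mathbb{Z}_2)$: the stabilization to $K^1(X)$ and the passage through the Atiyah--Hirzebruch filtration must be handled with attention to torsion and to the extension problem after applying $\mathrm{Hom}(-,\mathbb{Z}_2)$.
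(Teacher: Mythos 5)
Your proposal is missing the central mechanism of the paper's proofs: Donaldson's stabilization trick. The theorem is stated for an \emph{arbitrary} Hermitian bundle $(E,h)$, and for a bundle of fixed low rank your key identifications are simply false. Indeed $\pi_1(\widetilde{\mathcal{B}}_{E,X})\cong\pi_0(\mathcal{G}_0)$ is governed by \emph{unstable} homotopy of $U(r)$, not by $K^1(X)$: for a rank-$2$ bundle over $S^8$ (where $H_{odd}=0$) one gets $\pi_1(\widetilde{\mathcal{B}}_{E,X})\cong\pi_8(U(2))\cong\pi_8(S^3)\cong\mathbb{Z}_2\neq0$, so $H^1(\widetilde{\mathcal{B}}_{E,X},\mathbb{Z}_2)\neq \mathrm{Hom}(H^{odd}(X,\mathbb{Z}),\mathbb{Z}_2)$. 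Your part (1) argument ("the hypothesis kills $H^1$, hence $w_1(\mathcal{L}_{\mathbb{R}})=0$") therefore breaks down, and the torsor count in part (2) would be over the wrong group; writing "in the stable range" does not repair this, because nothing in your proposal relates $\mathcal{L}_{\mathbb{C}}$ on $\widetilde{\mathcal{B}}_{E,X}$ to a stable object. What the paper actually does is form $E'=E\oplus(\det E)^{-1}\oplus\mathbb{C}^{p}$, an $SU(N)$-bundle with $N\gg0$, take the stabilization map $s:\widetilde{\mathcal{B}}_{E,X}\to\widetilde{\mathcal{B}}_{E',X}$, and use the decomposition $\mathfrak{g}_{E'}=\mathfrak{g}_{E}\oplus V\oplus\mathfrak{g}_{\mathbb{C}^{p}}$ (resp. $End E'=End E\oplus(V\oplus V^{*})\oplus End(\mathbb{C}^{p})$): the extra index factors are canonically oriented because $V$ is a complex bundle (case $8k$), resp. are canonical tensor squares by Corollary \ref{cor} (case $4k+2$). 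This identifies $\mathcal{L}_{\mathbb{C}}$ with $s^{*}$ of the stabilized determinant line up to canonically trivial or squared factors, and \emph{only then} does your topological computation become legitimate, on $\widetilde{\mathcal{B}}_{E',X}$ with $N\gg0$, via the Federer spectral sequence and Atiyah--Bott. Triviality (resp. the square roots) are then pulled back along $s$; this is exactly why the theorem's footnote points to Theorem \ref{ori odd} for the "precise meaning" of the parametrization by $\mathrm{Hom}(H^{odd}(X,\mathbb{Z}),\mathbb{Z}_2)$.

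There is a second gap in your existence argument for part (2): the symmetry $\sigma^{*}\mathcal{L}_{\mathbb{C}}\cong\mathcal{L}_{\mathbb{C}}$ on $\widetilde{\mathcal{B}}\times\widetilde{\mathcal{B}}$ does not by itself "realize $\mathcal{L}_{\mathbb{C}}|_{\Delta}$ as a tensor square"; you flagged this as the hard part but proposed no mechanism. The paper's mechanism is concrete: the isomorphism $\sigma^{*}\mathcal{L}\cong\mathcal{L}$ comes from Theorem \ref{thm 8k+2} (duality of kernels of twisted Dirac operators in dimensions $8k+2$, $8k+6$), and it is converted into evenness of $c_1(\mathcal{L}|_{\Delta})$ by an explicit mod-$2$ K\"unneth computation: writing $c_1(\mathcal{L})\equiv\sum_{i,j}n_{ij}\,a_i\otimes b_j+\sum_{i,j}m_{ij}\,b_i\otimes a_j+\sum_{i,j}k_{ij}\,c_i\otimes c_j \ (\mathrm{mod}\ 2)$, the symmetry forces $m_{ji}\equiv n_{ij}$ and $k_{ji}\equiv k_{ij}$, and restriction to the diagonal then shows $c_1(\mathcal{L}|_{\Delta})\equiv0\ (\mathrm{mod}\ 2)$, whence a square root exists and the set of choices is governed by $H^1(\widetilde{\mathcal{B}}_{E',X},\mathbb{Z}_2)\cong \mathrm{Hom}(H^{odd}(X,\mathbb{Z}),\mathbb{Z}_2)$ for $N\gg0$. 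Without the K\"unneth step (and without stabilization to control $H^1$), your outline asserts the conclusion rather than proving it.
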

On Calabi-Yau manifolds, $\slashed{S}_{\mathbb{C}}(X)=\bigwedge^{0,*}(X)$ and $\slashed{D}=\overline{\partial}$, the above result gives an
orientability for (coarse) moduli spaces of simple holomorphic bundles. By a machinery (heavily used by
Joyce-Song \cite{js}) called Seidel-Thomas twist \cite{st}, we can extend it to moduli spaces of simple coherent sheaves.
\begin{theorem}(Theorem \ref{ori on even CY}, Theorem \ref{ori on odd CY}) ${}$ \\
Let $X$ be a projective Calabi-Yau $n$-fold with $Hol(X)=SU(n)$, $\mathcal{M}_{X}$ be
a coarse moduli space of simple sheaves with fixed Chern classes \footnote{We endow it with the induced complex analytic topology
as page 54 of \cite{js}. One could also impose the Gieseker stability condition to get a projective scheme as the moduli space \cite{hl}. },
and we denote its determinant line bundle by $\mathcal{L}_{\mathcal{M}_{X}}$.
%with $\mathcal{L}_{\mathcal{M}_{X}}|_{\mathcal{F}}\cong det(Ext^{odd}(\mathcal{F},\mathcal{F}))\otimes det(Ext^{even}(\mathcal{F},\mathcal{F}))^{-1}$.
Then, we have \\
(1) if $n=4k$, structure group of $(\mathcal{L}_{\mathcal{M}_{X}},Q_{Serre})$ can be reduced to $SO(1,\mathbb{C})$, when $H_{odd}(X,\mathbb{Z})=0$ \\
(2) if $n=4k+2$, the structure group of $(\mathcal{L}_{\mathcal{M}_{X}},Q_{Serre})$ is canonically reduced to $SO(1,\mathbb{C})$
\footnote{This is first observed by Borisov and Joyce \cite{bj}.}; \\
(3) if $n$ is odd, each element in $Hom(H^{odd}(X,\mathbb{Z}),\mathbb{Z}_{2})$ determines an (algebraic) square root of
$\mathcal{L}_{\mathcal{M}_{X}}|_{\mathcal{M}_{X}^{red}}$ over the reduced scheme $\mathcal{M}_{X}^{red}$, when $\mathcal{M}_{X}$ is a proper scheme.
\end{theorem}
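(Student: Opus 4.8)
The plan is to reduce all three statements to the gauge-theoretic Theorem for bundles and then to transport the conclusions to arbitrary simple sheaves. First I would use the identification $\slashed{S}_{\mathbb{C}}(X)=\bigwedge^{0,*}(X)$ and $\slashed{D}=\overline{\partial}+\overline{\partial}^{*}$, which holds because $Hol(X)=SU(n)$ trivializes $K_X$ (hence the square root entering the spinor bundle). For a simple holomorphic bundle $E$ the twisted Dirac index computes $\bigoplus_{q}H^{q}(X,End\,E)=\bigoplus_{q}Ext^{q}(E,E)$, so over the locus of bundles the moduli determinant line $\mathcal{L}_{\mathcal{M}_X}$ is the pullback of $\mathcal{L}_{\mathbb{C}}$ from $\widetilde{\mathcal{B}}_{X}$, and $Q_{Serre}$ is exactly the pairing $Q$ coming from the spinor structure. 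This reduces (1) and (3) to the two cases of the preceding Theorem: for $n=4k$ the real dimension is $8k$ and triviality of $\mathcal{L}_{\mathbb{R}}$ follows from $H_{odd}(X,\mathbb{Z})=0$; for $n$ odd the real dimension is $4m+2$ and the square roots are those parametrized by $Hom(H^{odd}(X,\mathbb{Z}),\mathbb{Z}_{2})$.

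The case $n=4k+2$ (real dimension $8k+4\equiv 4 \bmod 8$) is not covered by that Theorem and needs a separate, and stronger, argument. Here the complex spinor representation is quaternionic, so $Ind(\slashed{\mathbb{D}}_{End\mathcal{E}})$ inherits a quaternionic structure $J$, namely the quaternionic structure on $\slashed{S}_{\mathbb{C}}$ tensored with the real structure $\phi\mapsto\phi^{\dagger}$ on $End\,E$. Since a quaternionic vector space has even complex dimension, $J$ induces a canonical real structure on the determinant, equivalently a canonical trivialization compatible with $Q_{Serre}$, giving the canonical reduction to $SO(1,\mathbb{C})$. Algebraically the same statement reads: Serre duality makes the pairing on the middle group $Ext^{n/2}(E,E)$ symplectic when $n\equiv 2 \bmod 4$; in $\mathcal{L}_{\mathcal{M}_X}=\bigotimes_{i}det(Ext^{i}(E,E))^{(-1)^{i}}$ the off-middle factors $Ext^{i}$ and $Ext^{n-i}\cong(Ext^{i})^{*}$ cancel canonically, and the symplectic form on the middle factor canonically trivializes its determinant.

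To pass from bundles to an arbitrary simple sheaf $F$ I would apply the Seidel-Thomas twist $T$ by $\mathcal{O}_{X}(-N)$ (or a suitable spherical object) for $N\gg 0$. On a bounded family $T$, being an autoequivalence of $D^{b}(X)$, carries $F$ to a shift of a holomorphic vector bundle while preserving $Ext^{\bullet}(F,F)\cong Ext^{\bullet}(TF,TF)$ together with its Serre pairing, since autoequivalences commute with the Serre functor. Hence $(\mathcal{L}_{\mathcal{M}_X},Q_{Serre})$ is identified, locally over $\mathcal{M}_X$, with the corresponding data on a moduli of bundles, and the three conclusions transport. For (3) I would finally upgrade the resulting topological square root to an \emph{algebraic} one over $\mathcal{M}_X^{red}$: when $\mathcal{M}_X$ is proper this is a GAGA argument, the reduced structure being what lets the analytic trivialization be realized by an algebraic line bundle.

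The hardest step will be the Seidel-Thomas reduction in families rather than pointwise: one must verify that the twist is performed uniformly over $\mathcal{M}_X$ and that it intertwines the quaternionic (resp.\ real) structures and $Q_{Serre}$ on the nose, not merely up to isomorphism, so that the \emph{canonicity} claimed in (2) and the parametrization in (3) survive. The second genuine difficulty is the algebraization in (3), namely promoting the analytic square root to an algebraic line bundle over the reduced scheme, where properness and the algebraic nature of the determinant line must be used with care.
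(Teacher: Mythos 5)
Your overall route coincides with the paper's: reduce simple sheaves to simple holomorphic bundles by Seidel--Thomas twists (Corollary \ref{st preserves L}, due to Joyce--Song \cite{js}), identify $(\mathcal{L}_{\mathcal{M}_{X}},Q_{Serre})$ with the spin-geometric data $(\mathcal{L}_{\mathbb{C}},Q)$ via $\slashed{S}_{\mathbb{C}}(X)=\bigwedge^{0,*}(X)$, $\slashed{D}=\overline{\partial}+\overline{\partial}^{*}$, quote Theorem \ref{ori even} for $n=4k$ and Theorem \ref{ori odd} for $n$ odd, and for $n\equiv 2\ (\mathrm{mod}\ 4)$ use the antisymmetry of the Serre pairing on the middle Ext group (the Mukai-type argument the paper attributes to Borisov--Joyce \cite{bj}). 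One step you gloss over: the gauge-theoretic results live on the framed space $\widetilde{\mathcal{B}}_{X}$, whereas $\mathcal{M}_{X}$ sits in the quotient $\mathcal{A}^{(0,1)}_{si}/\mathcal{G}^{c}_{red}$, related by the $PGL(r,\mathbb{C})$-bundle $\beta$; the paper descends the trivialization along $\beta$ using connectedness of its fibers, and your phrase ``$\mathcal{L}_{\mathcal{M}_{X}}$ is the pullback of $\mathcal{L}_{\mathbb{C}}$'' runs in the wrong direction (the framed space fibers over the moduli space, not conversely).

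Two points are genuine gaps. First, in case (2) your quaternionic argument proves too little: a quaternionic structure on the index induces only a real structure on the determinant (the antilinear map $\det J$ squares to $+1$ since the dimension is even), i.e.\ a real line bundle --- which is exactly the situation of case (1), where triviality is \emph{not} automatic and requires $H_{odd}(X,\mathbb{Z})=0$. What actually works is the second formulation you give: the Serre pairing on $Ext^{n/2}(E,E)$ is a non-degenerate $2$-form when $n\equiv 2\ (\mathrm{mod}\ 4)$, and since $Sp(2d,\mathbb{C})\subset SL(2d,\mathbb{C})$ a symplectic bundle has canonically trivial determinant, while the off-middle factors cancel in dual pairs; keep that argument and discard the quaternionic one. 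Second, your algebraization step in (3) is not a GAGA argument: a topological square root of $\mathcal{L}_{\mathcal{M}_{X}}|_{\mathcal{M}^{red}_{X}}$ need not admit any holomorphic structure a priori, and GAGA compares analytic with algebraic coherent sheaves, not topological line bundles with algebraic ones. The paper's mechanism is different: Lemma 6.1 of Nekrasov--Okounkov \cite{no} produces one algebraic square root from the evenness of $c_{1}$, and then the sequence $1\rightarrow\mathbb{Z}_{2}\rightarrow\mathcal{O}^{*}_{\mathcal{M}^{red}_{X}}\rightarrow\mathcal{O}^{*}_{\mathcal{M}^{red}_{X}}\rightarrow1$ shows that every principal $\mathbb{Z}_{2}$-bundle is an algebraic square root of $\mathcal{O}_{\mathcal{M}^{red}_{X}}$: properness and reducedness force global regular functions to be locally constant, so squaring is surjective on $H^{0}(\mathcal{M}^{red}_{X},\mathcal{O}^{*})$, the connecting map vanishes, and $H^{1}(\mathcal{M}^{red}_{X},\mathbb{Z}_{2})\rightarrow H^{1}(\mathcal{M}^{red}_{X},\mathcal{O}^{*}_{\mathcal{M}^{red}_{X}})$ is injective. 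It is this Kummer-sequence argument, not analytic--algebraic comparison, that converts the $Hom(H^{odd}(X,\mathbb{Z}),\mathbb{Z}_{2})$-parametrized topological square roots into algebraic ones over $\mathcal{M}^{red}_{X}$.
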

%The proof of this main theorem relies on varies tricks in gauge theory: firstly we use a machinery (heavily used before by
%Joyce-Song \cite{js}) called Seidel-Thomas twist \cite{st} to transform the problem to a problem on some coarse moduli spaces of
%simple holomorphic bundles (with complex analytic topology); secondly, we use Donaldson's argument \cite{d1} to further
%reduce the problem to complex vector bundles with high ranks and vanishing first Chern classes; thirdly,
%we calculate the torsion part of the second cohomology group (as an abelian group) of the base of the index bundle and
%prove it vanishes; finally, we apply the Atiyah-Singer family index theorem to calculate Chern classes of the index bundle directly
%(modulo torsion). We remark that in the proof of part $(2)$ of the above theorem, we further need to use a brilliant idea due to
%Maulik, Nekrasov and Okounkov \cite{no}. \\
This result in fact fits into the work of Borisov and Joyce on the definition of
orientations for derived schemes with shifted symplectic structures (see Definition 2.11 of \cite{bj}) and Joyce's definition of
orientations for d-critical loci \cite{joyce-d} (used to categorify $DT_{3}$ invariants).
In general, derived moduli schemes of simple sheaves on Calabi-Yau $n$-folds are expected to
have $(2-n)$-shifted symplectic structures in the sense of Pantev, T\"{o}en, Vaqui\'{e} and Vezzosi \cite{ptvv}.
When $n$ is even, there is a canonical isomorphism
\begin{equation}Q_{Serre}: \mathcal{L}_{\mathcal{M}_{X}}\otimes\mathcal{L}_{\mathcal{M}_{X}}\cong \mathcal{O}_{\mathcal{M}_{X}} \nonumber \end{equation}
and the orientability issue is to find a square root of this isomorphism \cite{bj}.
When $n$ is odd, the orientability issue is to find a square root of $\mathcal{L}_{\mathcal{M}_{X}}|_{\mathcal{M}^{red}_{X}}$
over the reduced scheme $\mathcal{M}^{red}_{X}$ \cite{joyce-d}.

Along this line, we also prove an orientability result for the relative situation where we have Calabi-Yau manifolds as anti-canonical
divisors of even dimensional projective manifolds.
This will be useful in the relative $DT_{4}$ theory \cite{caoleung2}
(which is part of the complexification of Donaldson-Floer TQFT theory on 4-3 dimensional manifolds).
\begin{theorem}(Weak relative orientability, Theorem \ref{ori on rel}) ${}$ \\
Let $Y$ be a smooth anti-canonical divisor in a projective $2n$-fold $X$ with $Tor(H_{*}(X,\mathbb{Z}))=0$, $E\rightarrow X$ be a
complex vector bundle with structure group $SU(N)$, where $N\gg0$.
Let $\mathcal{M}_{X}$ be a coarse moduli scheme of simple holomorphic structures on $E$, which has a well-defined restriction morphism
\begin{equation}r: \mathcal{M}_{X}\rightarrow \mathcal{M}_{Y}, \nonumber \end{equation}
to a proper coarse moduli scheme of simple bundles on $Y$ with fixed Chern classes.

Then there exists an algebraic square root $(\mathcal{L}_{\mathcal{M}_{Y}}|_{\mathcal{M}^{red}_{Y}})^{\frac{1}{2}}$ of
$\mathcal{L}_{\mathcal{M}_{Y}}|_{\mathcal{M}^{red}_{Y}}$ such that
\begin{equation}c_{1}(\mathcal{L}_{\mathcal{M}_{X}}|_{\mathcal{M}^{red}_{X}})
=r^{*}c_{1}((\mathcal{L}_{\mathcal{M}_{Y}}|_{\mathcal{M}^{red}_{Y}})^{\frac{1}{2}}),
\nonumber \end{equation}
%\begin{equation}\mathcal{L}_{\mathcal{M}_{X}}\cong r^{*}(\mathcal{L}_{\mathcal{M}_{Y}}^{\frac{1}{2}}), \nonumber \end{equation}
where $\mathcal{L}_{\mathcal{M}_{X}}$ (resp. $\mathcal{L}_{\mathcal{M}_{Y}}$) is the determinant line bundle of $\mathcal{M}_{X}$
(resp. $\mathcal{M}_{Y}$).
\end{theorem}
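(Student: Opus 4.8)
The plan is to reduce the assertion to a Grothendieck--Riemann--Roch (GRR) computation and then to combine the anti-canonical hypothesis $[Y]=c_{1}(X)$ with Serre duality. Write $\mathcal{E}_{X}\to\mathcal{M}_{X}\times X$ and $\mathcal{E}_{Y}\to\mathcal{M}_{Y}\times Y$ for the universal families and let $\pi_{X},\pi_{Y}$ be the projections onto the moduli factors. Applying GRR to the determinant of the relative $\mathrm{Ext}$-complex $R\pi_{\bullet\ast}R\mathcal{H}om(\mathcal{E}_{\bullet},\mathcal{E}_{\bullet})$ gives
\begin{equation}
c_{1}(\mathcal{L}_{\mathcal{M}_{X}})=\Big[\int_{X}\mathrm{ch}(\mathrm{End}\,\mathcal{E}_{X})\,\mathrm{Td}(TX)\Big]_{(2)},\qquad
c_{1}(\mathcal{L}_{\mathcal{M}_{Y}})=\Big[\int_{Y}\mathrm{ch}(\mathrm{End}\,\mathcal{E}_{Y})\,\mathrm{Td}(TY)\Big]_{(2)},
\nonumber
\end{equation}
where $[\,\cdot\,]_{(2)}$ extracts the $H^{2}$-component of the moduli factor after fibre integration. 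The one structural input I will use repeatedly is that $\mathrm{End}\,\mathcal{E}\cong(\mathrm{End}\,\mathcal{E})^{\vee}$, so that $\mathrm{ch}(\mathrm{End}\,\mathcal{E})$ is invariant under the duality involution $(-)^{\vee}$ that acts by $(-1)^{k}$ on $H^{2k}$.

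Next I would transport the $Y$-integral to $X$. Since $r$ is induced by restriction of holomorphic structures, the universal bundles satisfy $\mathrm{ch}(\mathrm{End}\,\mathcal{E}_{X})|_{\mathcal{M}_{X}\times Y}=(r\times\mathrm{id})^{\ast}\mathrm{ch}(\mathrm{End}\,\mathcal{E}_{Y})$ (the line-bundle ambiguity of $\mathcal{E}_{X}$ is killed by passing to $\mathrm{End}$). Writing $i\colon Y\hookrightarrow X$ and $y:=[Y]\in H^{2}(X)$, adjunction gives $\mathrm{Td}(TY)=i^{\ast}\big(\mathrm{Td}(TX)\cdot\tfrac{1-e^{-y}}{y}\big)$, and the projection formula $\int_{Y}i^{\ast}(\beta)=\int_{X}\beta\cup y$ turns the fraction into a genuine power series and yields
\begin{equation}
r^{\ast}c_{1}(\mathcal{L}_{\mathcal{M}_{Y}})=\Big[\int_{X}\mathrm{ch}(\mathrm{End}\,\mathcal{E}_{X})\,\mathrm{Td}(TX)\,(1-e^{-y})\Big]_{(2)}.
\nonumber
\end{equation}
Here the anti-canonical hypothesis enters decisively as $y=c_{1}(X)$.

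The crux is then a short Serre-duality computation. Set $\Phi:=\mathrm{ch}(\mathrm{End}\,\mathcal{E}_{X})\,\mathrm{Td}(TX)$ and $\lambda(\Phi):=[\int_{X}\Phi]_{(2)}=c_{1}(\mathcal{L}_{\mathcal{M}_{X}})$. Because fibre integration over the even-dimensional $X$ commutes with $(-)^{\vee}$ while the target $H^{2}$ picks up the sign $(-1)$, one gets $\lambda(\Phi^{\vee})=-\lambda(\Phi)$. Now $\mathrm{ch}(\mathrm{End}\,\mathcal{E}_{X})^{\vee}=\mathrm{ch}(\mathrm{End}\,\mathcal{E}_{X})$ and $\mathrm{Td}(TX)^{\vee}=\mathrm{Td}(TX)\,e^{-c_{1}(X)}=\mathrm{Td}(TX)\,e^{-y}$, so $\Phi^{\vee}=\mathrm{ch}(\mathrm{End}\,\mathcal{E}_{X})\,\mathrm{Td}(TX)\,e^{-y}$ and hence $\lambda(\mathrm{ch}(\mathrm{End}\,\mathcal{E}_{X})\,\mathrm{Td}(TX)\,e^{-y})=-c_{1}(\mathcal{L}_{\mathcal{M}_{X}})$. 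Subtracting from $c_{1}(\mathcal{L}_{\mathcal{M}_{X}})=\lambda(\Phi)$ produces the key identity
\begin{equation}
r^{\ast}c_{1}(\mathcal{L}_{\mathcal{M}_{Y}})=2\,c_{1}(\mathcal{L}_{\mathcal{M}_{X}})\in H^{2}(\mathcal{M}_{X}^{red},\mathbb{Z})/\mathrm{torsion}.
\nonumber
\end{equation}
What makes this work is precisely that the adjunction weight $e^{-y}$ and the Serre twist $e^{-c_{1}(X)}$ coincide, which is the anti-canonical condition.

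Finally I would produce the square root. Since $Y$ is Calabi--Yau of odd complex dimension $2n-1$, Theorem \ref{ori on odd CY}(3) furnishes algebraic square roots of $\mathcal{L}_{\mathcal{M}_{Y}}|_{\mathcal{M}_{Y}^{red}}$, forming a torsor under $\mathrm{Hom}(H^{odd}(Y,\mathbb{Z}),\mathbb{Z}_{2})$. For any such $L$ one has $2r^{\ast}c_{1}(L)=r^{\ast}c_{1}(\mathcal{L}_{\mathcal{M}_{Y}})=2\,c_{1}(\mathcal{L}_{\mathcal{M}_{X}})$, so the discrepancy $r^{\ast}c_{1}(L)-c_{1}(\mathcal{L}_{\mathcal{M}_{X}})$ is $2$-torsion; it remains to choose $L$ making it vanish. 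I expect this last step to be the main obstacle: one must identify the difference classes of competing square roots with the image of $H^{odd}(Y,\mathbb{Z}_{2})$ under the $\mu$-map of the universal bundle, check its compatibility with $r^{\ast}$ and with the restriction $H^{odd}(X)\to H^{odd}(Y)$, and then use $\mathrm{Tor}(H_{\ast}(X,\mathbb{Z}))=0$ together with $N\gg0$ to realise the required correction within the torsor. Granting this selection, $r^{\ast}c_{1}((\mathcal{L}_{\mathcal{M}_{Y}}|_{\mathcal{M}_{Y}^{red}})^{1/2})=c_{1}(\mathcal{L}_{\mathcal{M}_{X}}|_{\mathcal{M}_{X}^{red}})$, as claimed.
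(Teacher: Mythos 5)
Your rational computation is sound, and it is in substance the same identity the paper proves: the paper writes $Td(X)=e^{c_{1}(X)/2}\hat{A}(X)$ and checks that $Td(X)(1-e^{-c_{1}(X)})-2\,Td(X)=-\hat{A}(X)(e^{c_{1}(X)/2}+e^{-c_{1}(X)/2})$ has only even-index components, which is precisely your statement that the odd part of $\Phi+\Phi^{\vee}$ vanishes; your duality-involution phrasing is a slicker route to the same parity identity, and your adjunction/projection-formula step matches the paper's Gysin-map manipulation (the paper organizes the index computation via the Atiyah--Singer family index theorem restricted to embedded surfaces in a gauge-theoretic ambient space, rather than GRR directly on the moduli space, but that difference is cosmetic).

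The genuine gap is the final step, which you explicitly defer (``Granting this selection''). Establishing that $r^{*}c_{1}(L)-c_{1}(\mathcal{L}_{\mathcal{M}_{X}})$ is $2$-torsion does not prove the theorem, and your proposed remedy --- adjusting $L$ within the torsor of square roots on $\mathcal{M}_{Y}^{red}$ --- is not clearly viable: changing the square root only moves the discrepancy by classes of the form $r^{*}$ of $2$-torsion classes coming from $\mathcal{M}_{Y}^{red}$, and there is no reason the discrepancy lies in that image. Worse, $H^{2}(\mathcal{M}_{X}^{red},\mathbb{Z})$ can have abundant torsion, so no argument carried out modulo torsion on the moduli space itself can close this gap. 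The paper's mechanism is different and is exactly where its hypotheses do their work: it takes the square root on the $Y$-side to be one descending from a square root of $det(Ind(\slashed{\mathbb{D}}_{End\mathcal{E}_{Y}}))$ over the framed gauge space $\widetilde{\mathcal{B}}_{Y}$ (Theorem \ref{ori on odd CY}, via Theorem \ref{ori odd}), so that both sides of the claimed identity are pulled back from the $PSU(N)$-equivariant model $\widetilde{\mathcal{B}}_{X}^{*}\times_{G}EG$; it then shows, using the Federer spectral sequence, Atiyah--Bott, the hypothesis $Tor(H_{*}(X,\mathbb{Z}))=0$ and $N\gg0$, together with the fact that $\pi_{1}(PSU(N))$ is torsion while $\pi_{1}(\widetilde{\mathcal{B}}_{X}^{*}\times EG)$ is torsion-free, that $H^{2}(\widetilde{\mathcal{B}}_{X}^{*}\times_{G}EG,\mathbb{Z})$ is torsion-free --- and only then does the rational identity imply the integral one, which is afterwards restricted to $\mathcal{M}_{X}^{red}$. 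Your proposal never uses $Tor(H_{*}(X,\mathbb{Z}))=0$ or $N\gg0$ in any substantive way, which is a sign that the decisive step is missing.
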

Given such a restriction morphism $r$, $\mathcal{M}_{X}$ is expected to be a Lagrangian (see Calaque \cite{calaque}) of
the derived scheme $\mathcal{M}_{Y}$ with $(3-2n)$-shifted symplectic structure in the sense of Pantev, T\"{o}en, Vaqui\'{e} and Vezzosi \cite{ptvv}.
Then there is a canonical isomorphism
\begin{equation}(\mathcal{L}_{\mathcal{M}_{X}})^{\otimes2}\cong r^{*}\mathcal{L}_{\mathcal{M}_{Y}}   \nonumber \end{equation}
between determinant line bundles (it is verified directly in Lemma \ref{rel ori lemma}). The orientability issue in this relative case
is to find a square root of this isomorphism (see Definition \ref{def on rel ori}),
which is partially verified by the above weak relative orientability result and Proposition \ref{prop on rel ori}. \\
${}$ \\
\textbf{Content of the paper}: In section 2, we study the orientability issue for moduli spaces of simple sheaves on
Calabi-Yau manifolds of even dimensions. We first prove a general result for spin manifolds of $8k$ dimensions and then apply it to the case of
Calabi-Yau even-folds. Then we discuss its relation to the work of Borisov and Joyce \cite{bj} on the definition of orientations for derived schemes
with shifted symplectic structures \cite{ptvv}. In section 3, we study the orientability issue for
moduli spaces of simple sheaves on Calabi-Yau manifolds of odd dimensions (and corresponding results on spin manifolds of $(4k+2)$ dimensions).
The orientability result fits into Joyce's definition of orientations for d-critical loci \cite{joyce-d}
(used to categorify $DT_{3}$ invariants). In section 4, we discuss the orientability issue for the relative situation.
We define relative orientations for restriction morphisms and partially verify their existences.
In the appendix, we list some useful facts on spin geometry, gauge theory
and Seidel-Thomas twists. \\
${}$ \\
\textbf{Acknowledgement}:
The first author expresses his deep gratitude to Professor Simon Donaldson, Dominic Joyce and Tony Pantev for many useful discussions.
We thank Zheng Hua for varies helpful discussions and comments. The work of the second author was substantially supported by
grants from the Research Grants Council of the Hong Kong Special Administrative Region, China (Project No. CUHK401411 and CUHK14302714).

\section{Orientability for even dimensional Calabi-Yau }
%We move to even complex dimensional Calabi-Yau manifolds \footnote{We always mean $Hol(X)=SU(n)$.} and extend the story of
%orientations for moduli spaces of sheaves on Calabi-Yau 4-folds.
We fix a compact spin manifold $X$ of even dimension and a (Hermitian) complex vector bundle $(E,h)\rightarrow X$.
Given an unitary connection $A$ on $E$, we define the twisted Dirac operator
\begin{equation}\slashed{D}_{A^{*}\otimes A}: \Gamma(\slashed{S}^{+}_{\mathbb{C}}(X)\otimes EndE)\rightarrow
\Gamma(\slashed{S}^{-}_{\mathbb{C}}(X)\otimes EndE) \nonumber \end{equation}
following Theorem 13.10 of \cite{lawson}. $[ker(\slashed{D}_{A^{*}\otimes A})-coker(\slashed{D}_{A^{*}\otimes A})]$
exists as an element in the $K$-theory $K(pt)$ of one point, and there is a family version of the above construction
as follows\footnote{More details are explained in the appendix.}.

Let $\mathcal{A}$ be the space of all unitary connections on $(E,h)$, and $\mathcal{G}$ be the group of
unitary gauge transformations. We denote the $U(r)$-principal bundle (of frames) of $E$ by $P$, fix a base point $x_{0}\in X$ and introduce the space
\begin{equation}\widetilde{\mathcal{B}}_{X}=\mathcal{A}\times_{\mathcal{G}}P_{x_{0}} \quad (=\widetilde{\mathcal{B}}_{E,X}) \nonumber \end{equation}
of gauge equivalent classes of framed connections. Equivalently, $\widetilde{\mathcal{B}}_{X}=\mathcal{A}/\mathcal{G}_{0}$,
where $\mathcal{G}_{0}\vartriangleleft \mathcal{G}$ is the subgroup of gauge transformations which fix the fiber $P_{x_{0}}$.
As $\mathcal{G}_{0}$ acts freely on $\mathcal{A}$, $\widetilde{\mathcal{B}}_{X}$ (with suitable Sobolev structure)
has a Banach manifold structure whose weak homotopy type will not depend on the chosen Sobolev structures (Proposition 5.1.4 \cite{dk}).

Meanwhile, there exists a universal bundle $\mathcal{E}=\mathcal{A}\times_{\mathcal{G}_{0}}E$ over $\widetilde{\mathcal{B}}_{X}\times X$
(trivialized on $\widetilde{\mathcal{B}}_{X}\times\{x_{0}\}$), which carries a universal family of framed connections.
We then couple the Dirac operator $\slashed{D}$ on $X$ with the connection on $\mathcal{E}$ and there is an index bundle
\begin{equation} Ind(\slashed{\mathbb{D}}_{End\mathcal{E}})\in K(\widetilde{\mathcal{B}}_{X}), \nonumber \end{equation}
which satisfies $Ind(\slashed{\mathbb{D}}_{End\mathcal{E}})|_{[A]}=ker(\slashed{D}_{A^{*}\otimes A})-coker(\slashed{D}_{A^{*}\otimes A})\in K(pt)$
\cite{as4}. The determinant $\mathcal{L}_{\mathbb{C}}=det(Ind(\slashed{\mathbb{D}}_{End\mathcal{E}}))$ of
$Ind(\slashed{\mathbb{D}}_{End\mathcal{E}})$ exists as a complex line bundle over $\widetilde{\mathcal{B}}_{X}$. Meanwhile, the $U(r)$-action on
$\widetilde{\mathcal{B}}_{X}$ which changes framing at $P_{x_{0}}$ naturally extends to the line bundle
$\mathcal{L}_{\mathbb{C}}\rightarrow\widetilde{\mathcal{B}}_{X}$.

If the spin manifold $X$ is of real dimension $8k$, the complex spinor bundle $\slashed{S}^{\pm}_{\mathbb{C}}(X)$ is the
complexification of real spinor bundle $\slashed{S}^{\pm}(X)$, i.e. $\slashed{S}^{\pm}_{\mathbb{C}}(X)=\slashed{S}^{\pm}(X)\otimes_{\mathbb{R}}\mathbb{C}$
(see page 99 of \cite{lawson} or Theorem \ref{rep of spin gp}).
Then $\slashed{S}^{\pm}_{\mathbb{C}}(X)\otimes_{\mathbb{C}} EndE=(\slashed{S}^{\pm}(X)\otimes_{\mathbb{R}} \mathfrak{g}_{E})\otimes_{\mathbb{R}}\mathbb{C}$ and the corresponding
twisted Dirac operator is the complexification of the real one. Thus the determinant line bundle $\mathcal{L}_{\mathbb{C}}$ is the
complexification of a real determinant line bundle $\mathcal{L}_{\mathbb{R}}=det(Ind(\slashed{\mathbb{D}}_{\mathfrak{g}_{\mathcal{E}}}))$ for twisted Dirac operators of type
\begin{equation}\slashed{D}_{A^{*}\otimes A}: \Gamma(\slashed{S}^{+}(X)\otimes_{\mathbb{R}} \mathfrak{g}_{E})\rightarrow
\Gamma(\slashed{S}^{-}(X)\otimes_{\mathbb{R}} \mathfrak{g}_{E}). \nonumber \end{equation}
This then defines a non-degenerate quadratic form $Q$ on $\mathcal{L}_{\mathbb{C}}$ and gives a trivialization
\begin{equation}Q: \mathcal{L}_{\mathbb{C}}\otimes\mathcal{L}_{\mathbb{C}}\cong \mathbb{C}\times\widetilde{\mathcal{B}}_{X}. \nonumber \end{equation}
\begin{theorem}\label{ori even}
For any compact spin manifold $X$ of real dimension $8k$ with $H_{odd}(X,\mathbb{Z})=0$, and a Hermitian vector bundle $E\rightarrow X$, the
structure group of $(\mathcal{L}_{\mathbb{C}},Q)$ can be reduced to $SO(1,\mathbb{C})$, i.e.
%\begin{equation}c^{U(r)}_{1}(det(Ind(\slashed{\mathbb{D}}_{End\mathcal{E}})))=0, \nonumber \end{equation}
the corresponding real line bundle $\mathcal{L}_{\mathbb{R}}$ of $(\mathcal{L}_{\mathbb{C}},Q)$ is trivial.
\end{theorem}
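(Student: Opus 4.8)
The plan is to convert the triviality of the real line bundle $\mathcal{L}_{\mathbb{R}}$ into the vanishing of a single mod-$2$ characteristic class and then to compute that class purely topologically. Since a real line bundle is trivial exactly when it is orientable, i.e. when its first Stiefel--Whitney class vanishes, it suffices to prove $w_{1}(\mathcal{L}_{\mathbb{R}})=0$ in $H^{1}(\widetilde{\mathcal{B}}_{X},\mathbb{Z}_{2})$. One should note at the outset that the families index theorem alone cannot settle this: because $\mathcal{L}_{\mathbb{C}}=\mathcal{L}_{\mathbb{R}}\otimes_{\mathbb{R}}\mathbb{C}$ carries a real structure, $c_{1}(\mathcal{L}_{\mathbb{C}})$ is $2$-torsion and the genuine obstruction lives in $H^{1}(-,\mathbb{Z}_{2})$, so a homotopy-theoretic computation of $\widetilde{\mathcal{B}}_{X}$ is forced on us. Since $\mathcal{A}$ is contractible and $\mathcal{G}_{0}$ acts freely, $\widetilde{\mathcal{B}}_{X}=\mathcal{A}/\mathcal{G}_{0}$ is a model for $B\mathcal{G}_{0}$, whence $\pi_{1}(\widetilde{\mathcal{B}}_{X})\cong\pi_{0}(\mathcal{G}_{0})$ and $H^{1}(\widetilde{\mathcal{B}}_{X},\mathbb{Z}_{2})\cong\mathrm{Hom}(\pi_{0}(\mathcal{G}_{0}),\mathbb{Z}_{2})$.

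The heart of the argument is to compute $\pi_{0}(\mathcal{G}_{0})$ for the based unitary gauge group. Identifying based gauge transformations with based maps into the structure group gives $\pi_{0}(\mathcal{G}_{0})\cong[X,U(r)]_{*}$ (suitably twisted when the frame bundle is nontrivial). In the stable range $\dim X\le 2r-1$ the inclusion $U(r)\hookrightarrow U$ is a homotopy equivalence through dimension $\dim X=8k$, so $[X,U(r)]_{*}\cong\widetilde{K}^{1}(X)$, reduced odd $K$-theory. Here I would invoke the hypothesis: since $X$ is closed, oriented (being spin) and even-dimensional, Poincar\'{e} duality gives $H^{\mathrm{odd}}(X,\mathbb{Z})\cong H_{\mathrm{odd}}(X,\mathbb{Z})=0$; feeding this into the Atiyah--Hirzebruch spectral sequence, whose odd $K$-theory is assembled from the subquotients of $H^{\mathrm{odd}}(X,\mathbb{Z})$, yields $\widetilde{K}^{1}(X)=0$. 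Hence $\pi_{0}(\mathcal{G}_{0})=0$, so $H^{1}(\widetilde{\mathcal{B}}_{X},\mathbb{Z}_{2})=0$ and $\mathcal{L}_{\mathbb{R}}$ is trivial, at least in the stable range.

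To remove the rank restriction I would argue by stabilization. Replacing $E$ by $E'=E\oplus\underline{\mathbb{C}}^{m}$ with $m\gg0$ and using the trivial connection on the new summand gives an embedding $\widetilde{\mathcal{B}}_{E,X}\hookrightarrow\widetilde{\mathcal{B}}_{E',X}$. The twisting bundle splits as $\mathrm{End}(E')=\mathrm{End}(E)\oplus\big(E^{\oplus m}\oplus(E^{*})^{\oplus m}\big)\oplus\mathrm{End}(\underline{\mathbb{C}}^{m})$, so by multiplicativity of determinants of index bundles under direct sums of twisting coefficients, the restriction of $\mathcal{L}_{\mathbb{R},E'}$ becomes $\mathcal{L}_{\mathbb{R},E}$ tensored with the real determinant lines of the off-diagonal and trivial summands. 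The trivial summand contributes a constant line, while the self-conjugate off-diagonal summand $E^{\oplus m}\oplus(E^{*})^{\oplus m}$ has real determinant line of the form $\mathcal{L}\otimes\overline{\mathcal{L}}$, on which the pointwise Hermitian norm supplies a nowhere-vanishing real section and hence a canonical trivialization. Choosing $m$ large enough that $E'$ lies in the stable range makes $\mathcal{L}_{\mathbb{R},E'}$ trivial by the previous paragraph, and the decomposition then forces $\mathcal{L}_{\mathbb{R},E}$ to be trivial as well.

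I expect the stabilization step to be the main obstacle. One must check carefully that the $\mathrm{End}$-twisted Dirac determinant is multiplicative along the splitting of the twisting bundle, that this multiplicativity is compatible with the real structures descending from the $8k$-dimensional spinor bundle, and that the off-diagonal real determinant line is \emph{canonically} trivial rather than merely of vanishing $w_{1}$. By contrast the stable-range input $\widetilde{K}^{1}(X)=0$ is comparatively routine once the Poincar\'{e}-duality reformulation $H_{\mathrm{odd}}(X,\mathbb{Z})=0 \Leftrightarrow H^{\mathrm{odd}}(X,\mathbb{Z})=0$ is recorded.
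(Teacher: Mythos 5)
Your strategy is sound and, at the level of its skeleton, is the same as the paper's: stabilize the bundle, observe that the correction factors coming from complex-linear summands of the twisting bundle carry canonical orientations, and then kill $H^{1}(\widetilde{\mathcal{B}},\mathbb{Z}_{2})$ by showing the stabilized framed configuration space is simply connected, using the vanishing of odd (co)homology. The differences are in execution. The paper stabilizes to an $SU(N)$-bundle $E'=E\oplus(\det E)^{-1}\oplus\mathbb{C}^{p}$ and computes $\pi_{1}(\widetilde{\mathcal{B}}_{E',X})\cong\pi_{0}(\mathcal{G})\cong\pi_{1}(\mathrm{Map}_{E'}(X,BSU(N)))\cong\bigoplus_{k\geq1}H^{2k+1}(X,\mathbb{Z})$ via Atiyah--Bott and the Federer spectral sequence; you stabilize by adding only $\underline{\mathbb{C}}^{m}$ and compute $\pi_{0}(\mathcal{G}_{0})\cong[X,U(r)]_{*}\cong\widetilde{K}^{1}(X)$ in the stable range, killing it with the Atiyah--Hirzebruch spectral sequence plus Poincar\'{e} duality (the paper needs the same duality step implicitly, since its hypothesis is on homology while the Federer $E_{2}$-page involves cohomology; you make this explicit, which is good). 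Your $K$-theoretic formulation is arguably cleaner, and your opening remark that the obstruction is $2$-torsion and hence invisible to the Chern character correctly explains why a homotopy-theoretic computation is unavoidable. Your treatment of the stabilization factors is also the same mechanism as the paper's: the off-diagonal summand is the underlying real bundle of a complex bundle, the twisted real Dirac operator is then complex-linear, and complex-linearity (equivalently, your norm section of $\mathcal{L}\otimes\overline{\mathcal{L}}$) yields the canonical trivialization.

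The one genuine hole is hidden in your parenthetical ``suitably twisted when the frame bundle is nontrivial.'' After your stabilization, $E'=E\oplus\underline{\mathbb{C}}^{m}$ is in general still a nontrivial bundle, so $\mathcal{G}_{0}$ is the space of based sections of the conjugation bundle $P\times_{\mathrm{Ad}}U(r)$, not $\mathrm{Map}_{*}(X,U(r))$, and the identification $\pi_{0}(\mathcal{G}_{0})\cong\widetilde{K}^{1}(X)$ is not automatic; this is precisely where your argument currently relies on an unproved assertion. The standard fix is the Atiyah--Bott equivalence $\widetilde{\mathcal{B}}_{E',X}\simeq B\mathcal{G}_{0}\simeq\mathrm{Map}^{E'}_{*}(X,BU(r))$ (the component of the classifying map of $E'$), followed by passage to $BU$ in the stable range and the observation that $\mathrm{Map}_{*}(X,BU)$ is a group-like $H$-space, so all of its components have the same fundamental group, namely $\pi_{1}(\mathrm{Map}_{*}(X,BU),\mathrm{const})=[\Sigma X,BU]_{*}=\widetilde{K}^{1}(X)$. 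Note that the paper's route avoids this issue by construction: the Federer spectral sequence is applied directly to $\mathrm{Map}_{E'}(X,BSU(N))$, i.e.\ to the correct component of the mapping space for the possibly nontrivial bundle, so no untwisting is ever needed. With that one step supplied, your proof is complete.
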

\begin{proof}
Following the approach by Donaldson \cite{d1}, \cite{dk}, by considering $E^{'}=E\oplus (detE)^{-1}\oplus \mathbb{C}^{p}$, we have a stabilization map
\begin{equation}s: \widetilde{\mathcal{B}}_{E,X}\rightarrow \widetilde{\mathcal{B}}_{E',X}, \quad s(A)=A\oplus (det(A))^{*}\oplus\theta
\nonumber \end{equation}
%\begin{equation}s(A)=A\oplus det(A)^{-1}\oplus\theta, \nonumber \end{equation}
where $\theta$ is the rank $p$ product connection. When a $SU(N)$ connection on $E'$ decomposes as $A\oplus (det(A))^{*}\oplus\theta$, there is a
decomposition of the adjoint bundle
\begin{equation}\mathfrak{g}_{E'}=\mathfrak{g}_{E}\oplus V\oplus \mathfrak{g}_{\mathbb{C}^{p}}, \quad V\triangleq ((detE)\otimes E)\oplus ((\mathbb{C}^{p})^{*}\otimes E)\oplus ((detE)\otimes\mathbb{C}^{p}).  \nonumber \end{equation}
%\begin{equation}EndE'=EndE\oplus T^{*}V\oplus End(\mathbb{C}^{p}), \quad V\triangleq ((detE)\otimes E)\oplus ((\mathbb{C}^{p})^{*}\otimes E)\oplus ((detE)\otimes\mathbb{C}^{p})  \nonumber \end{equation}
The index of any operator coupled with this bundle (with connection) is a sum of corresponding terms.
In the obvious notations, we have
\begin{equation}s^{*}(det(Ind(\slashed{\mathbb{D}}_{\mathfrak{g}_{\mathcal{E}'}})))=det(Ind(\slashed{\mathbb{D}}_{\mathfrak{g}_{\mathcal{E}}}))\otimes
det(Ind(\slashed{\mathbb{D}}_{\mathbb{V}}))\otimes det(Ind(\slashed{\mathbb{D}}_{\mathfrak{g}_{\mathbb{C}^{p}}})).    \nonumber \end{equation}
As $V$ is complex, $det(Ind(\slashed{\mathbb{D}}_{\mathbb{V}}))$ has a canonical orientation. $det(Ind(\slashed{\mathbb{D}}_{\mathfrak{g}_{\mathbb{C}^{p}}}))$ is also trivial as $\mathbb{C}^{p}$ is a product bundle.
Then $s^{*}(det(Ind(\slashed{\mathbb{D}}_{\mathfrak{g}_{\mathcal{E}'}})))\cong det(Ind(\slashed{\mathbb{D}}_{\mathfrak{g}_{\mathcal{E}}}))$.

Then we are left to show $det(Ind(\slashed{\mathbb{D}}_{\mathfrak{g}_{\mathcal{E}}}))$ is trivial for a $SU(N)$ complex vector bundle $E$ on $X$ with $N\gg0$. Analogs to Theorem 10.14 of \cite{caoleung}, we apply the Federer spectral sequence \cite{mccleary},
\begin{equation}E_{2}^{p,q}\cong H^{p}(X,\pi_{p+q}(BSU(N)))\Rightarrow \pi_{q}(Map_{E}(X,BSU(N))).  \nonumber \end{equation}
For $N\gg0$, we get $\pi_{1}(Map_{E}(X,BSU(N)))\cong \bigoplus_{k\geq1} H^{2k+1}(X,\mathbb{Z})$, which vanishes by the assumption.
From Atiyah-Bott (Proposition 2.4 \cite{ab1}), we have a homotopy equivalence
\begin{equation}B\mathcal{G}\simeq Map_{E}(X,BSU(N)).  \nonumber \end{equation}
Then
\begin{equation}\pi_{1}(\widetilde{\mathcal{B}}_{X})\cong\pi_{1}((\mathcal{A}\times SU(N))/\mathcal{G})\cong\pi_{0}(\mathcal{G})\cong \pi_{1}(Map_{E}(X,BSU(N)))=0. \nonumber \end{equation}
Then any real line bundle over $\widetilde{\mathcal{B}}_{X}$ (including $\mathcal{L}_{\mathbb{R}}=det(Ind(\slashed{\mathbb{D}}_{\mathfrak{g}_{\mathcal{E}}}))$) is trivial.
%Meanwhile, $\pi_{1}(\widetilde{\mathcal{B}}_{X}\times_{SU(N)} ESU(N))\cong\pi_{1}(\widetilde{\mathcal{B}}_{X})$ and $H_{1}(\widetilde{\mathcal{B}}_{X}\times_{SU(N)} ESU(N),\mathbb{Z})\cong\pi_{1}(\widetilde{\mathcal{B}}_{X}\times_{SU(N)} ESU(N))$. Then $H^{2}(\widetilde{\mathcal{B}}_{X}\times_{SU(N)} ESU(N),\mathbb{Z})$ is also torsion-free.
%As $SU(N)$ preserves $det(Ind(\slashed{\mathbb{D}}_{End\mathcal{E}}))$, the line bundle lifts to $\widetilde{\mathcal{B}}_{X}\times_{SU(N)} ESU(N)$.
%We are thus left to show for any embedded surface $C\subseteq \widetilde{\mathcal{B}}_{X}\times_{SU(N)} ESU(N)$, $c_{1}(det(Ind(\slashed{\mathbb{D}}_{End\mathcal{E}}))|_{C})=0$. We denote $\mathcal{E}|_{C}$ to be the universal bundle on $X\times C$, and apply the Atiyah-Singer family index theorem \cite{as4},
%\begin{equation}c_{1}(det(Ind(\slashed{\mathbb{D}}_{End\mathcal{E}}))|_{C})=[ch(End \mathcal{E}|_{C})\cdot \hat{A}(X)]^{(4m+2)}/[X]=0,  \nonumber \end{equation}
%as $ch_{odd}(End \mathcal{E}|_{C})=0$.
\end{proof}
We fix a Calabi-Yau $4n$-fold $X$, and denote the determinant line bundle of a coarse moduli space $\mathcal{M}_{X}$ of
simple sheaves by $\mathcal{L}_{\mathcal{M}_{X}}$  with
$\mathcal{L}_{\mathcal{M}_{X}}|_{\mathcal{F}}\cong det(Ext^{odd}(\mathcal{F},\mathcal{F}))\otimes det(Ext^{even}(\mathcal{F},\mathcal{F}))^{-1}$.
The Serre duality pairing defines a non-degenerate quadratic form $Q_{Serre}$ on $\mathcal{L}_{\mathcal{M}_{X}}$ and gives a trivialization
\begin{equation}Q_{Serre}: \mathcal{L}_{\mathcal{M}_{X}}\otimes\mathcal{L}_{\mathcal{M}_{X}}\cong \mathcal{O}_{\mathcal{M}_{X}}. \nonumber \end{equation}
\begin{theorem}\label{ori on even CY}
Let $X$ be a projective Calabi-Yau $4n$-fold with $H_{odd}(X,\mathbb{Z})=0$, $\mathcal{M}_{X}$ be a coarse moduli space of simple sheaves with fixed Chern classes.

Then the structure group of $(\mathcal{L}_{\mathcal{M}_{X}},Q_{Serre})$ can be reduced to $SO(1,\mathbb{C})$, i.e.
the corresponding real line bundle $\mathcal{L}_{\mathbb{R}}$ of $(\mathcal{L}_{\mathcal{M}_{X}},Q_{Serre})$ is trivial.
In particular, $\mathcal{L}_{\mathcal{M}_{X}}\cong\mathcal{O}_{\mathcal{M}_{X}}$.
%$\mathcal{L}_{\mathcal{M}_{X}}\cong \mathcal{O}_{\mathcal{M}_{X}}$.
\end{theorem}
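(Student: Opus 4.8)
The plan is to reduce this to the purely gauge-theoretic Theorem \ref{ori even} in two steps: first identifying the Serre-duality data on $\mathcal{M}_X$ with the real spinor index data on $\widetilde{\mathcal{B}}_X$, and then passing from arbitrary simple sheaves to honest holomorphic bundles by a Seidel-Thomas twist.

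First I would note that a projective Calabi-Yau $4n$-fold $X$ is a compact spin manifold of real dimension $8n=8k$ (with $k=n$) carrying $K_X\cong\mathcal{O}_X$, so the hypotheses of Theorem \ref{ori even} hold once we assume $H_{odd}(X,\mathbb{Z})=0$. Under the Calabi-Yau identifications $\slashed{S}_{\mathbb{C}}(X)\cong\bigwedge^{0,*}(X)$ and $\slashed{D}\cong\overline{\partial}+\overline{\partial}^{*}$, the Dirac operator coupled to a unitary connection $A$ on $E$ becomes the Dolbeault operator on $\bigwedge^{0,*}(X)\otimes EndE$; when $A$ is integrable, i.e. defines a holomorphic structure $\mathcal{F}$, its index is $\sum_i (-1)^i Ext^i(\mathcal{F},\mathcal{F})$, so the fiber of $\mathcal{L}_{\mathbb{C}}$ over $[A]$ is canonically $det(Ext^{odd}(\mathcal{F},\mathcal{F}))\otimes det(Ext^{even}(\mathcal{F},\mathcal{F}))^{-1}$. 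The step requiring care is to check that the real structure on $\slashed{S}_{\mathbb{C}}(X)$ in dimension $8k$ (part (1) of the introductory Lemma) induces on this index exactly the Serre pairing: concretely, the real structure pairs $Ext^i$ with $Ext^{8n-i}$, which is Serre duality precisely because $K_X\cong\mathcal{O}_X$. This shows that over the open locus of simple holomorphic bundles, $(\mathcal{L}_{\mathcal{M}_X},Q_{Serre})$ is the pullback of $(\mathcal{L}_{\mathbb{C}},Q)$ along the natural classifying map into $\widetilde{\mathcal{B}}_X$, the index data being $U(r)$-equivariant and hence descending compatibly to the moduli of simple bundles.

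With this identification in hand, triviality is inherited by pullback. Theorem \ref{ori even}, applied after the stabilization replacing $E$ by $E\oplus(detE)^{-1}\oplus\mathbb{C}^{p}$ to reduce the structure group to $SU(N)$ with $N\gg0$, shows that the real line bundle $\mathcal{L}_{\mathbb{R}}$ underlying $(\mathcal{L}_{\mathbb{C}},Q)$ is trivial over $\widetilde{\mathcal{B}}_X$; under the stabilization the determinant changes only by the canonically oriented complex factor $det(Ind(\slashed{\mathbb{D}}_{\mathbb{V}}))$ and the product-connection factor, so triviality is unaffected. Pulling back a trivialization reduces the structure group of $(\mathcal{L}_{\mathcal{M}_X},Q_{Serre})$ to $SO(1,\mathbb{C})$, and since the complexification of a trivial real line bundle is the trivial complex line bundle, $\mathcal{L}_{\mathcal{M}_X}\cong\mathcal{O}_{\mathcal{M}_X}$, at least over the bundle locus.

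Finally, to treat all simple coherent sheaves rather than only locally free ones, I would invoke the Seidel-Thomas twist as used by Joyce-Song \cite{js,st}: a suitable twist functor sends a simple sheaf to a complex of genuine vector bundles with isomorphic $Ext$-algebra and identifies the moduli problems, carrying the determinant line bundle and its Serre pairing into the bundle setting up to canonically oriented factors, which transports the result above to all of $\mathcal{M}_X$. I expect the main obstacle to lie in this last bookkeeping together with the middle-degree sign check: one must ensure that the twist preserves not merely the isomorphism class of $\mathcal{L}_{\mathcal{M}_X}$ but the quadratic form $Q_{Serre}$, hence the real structure, and that the spinorial real pairing of $Ext^i$ with $Ext^{8n-i}$ matches Serre duality on the nose in real dimension $8n$, so that the reduction to $SO(1,\mathbb{C})$ (an orientation statement) rather than a mere complex triviality genuinely holds.
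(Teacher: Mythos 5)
Your proposal is correct and follows essentially the same route as the paper: reduce to moduli of simple holomorphic bundles via Seidel--Thomas twists (Joyce--Song, Corollary \ref{st preserves L}, which already records that the twist preserves $Q_{Serre}$ --- the ``bookkeeping'' you flag as the main obstacle), identify $(\mathcal{L}_{\mathcal{M}_{X}},Q_{Serre})$ with the spin-geometric pair $(\mathcal{L}_{\mathbb{C}},Q)$ using $\slashed{S}_{\mathbb{C}}(X)=\bigwedge^{0,*}(X)$ and $\slashed{D}=\overline{\partial}$, descend along the framing-forgetting $PGL(r,\mathbb{C})$-bundle $\beta$ (whose connected fibers let the trivialization descend), and then apply Theorem \ref{ori even}. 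The only cosmetic difference is ordering: the paper performs the Seidel--Thomas reduction at the start rather than at the end.
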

\begin{proof}
By the work of Joyce-Song \cite{js} (see Corollary \ref{st preserves L} in the appendix), we are reduced to consider the case when $\mathcal{M}_{X}$ is
a coarse moduli space of simple holomorphic structures on a complex bundle $E$ of rank $r$.

Let $\mathcal{A}^{*}\subseteq \mathcal{A}$ be the subspace of irreducible unitary connections on $(E,h)$,
$\widetilde{\mathcal{B}}_{X}^{*}=\mathcal{A}^{*}/\mathcal{G}_{0}\subseteq\widetilde{\mathcal{B}}_{X}$
be the open subset of irreducible framed connections whose complement is of infinite codimension (page 181 of \cite{dk}).
As Section 9.1 of \cite{js}, we introduce $\mathcal{A}^{(0,1)}_{si}$ to be the space of simple $(0,1)$-connections on $E$.
There is a group $\mathcal{G}^{c}$ of complex gauge transformations acting on $\mathcal{A}^{(0,1)}_{si}$ with stabilizer
$\mathbb{C}^{*}\cdot Id_{E}$. The subgroup $\mathcal{G}^{c}_{0}$ which preserves a fiber $E_{x_{0}}$ then acts freely on $\mathcal{A}^{(0,1)}_{si}$,
and $\mathcal{A}^{(0,1)}_{si}/\mathcal{G}^{c}_{0}$ is a Banach complex manifold (with suitable Banach completions, see \cite{js}).
Via the Hermitian metric $h$, $\mathcal{A}^{(0,1)}_{si}\cong\mathcal{A}^{*}$,
we then have an embedding $\mathcal{A}^{(0,1)}_{si}/\mathcal{G}^{c}_{0}\subseteq \widetilde{\mathcal{B}}_{X}^{*}$.
There is also a forgetful map (similar to (5.1.3) in \cite{dk})
\begin{equation}\beta: \mathcal{A}^{(0,1)}_{si}/\mathcal{G}^{c}_{0}\rightarrow\mathcal{A}^{(0,1)}_{si}/\mathcal{G}^{c}_{red},
\nonumber \end{equation}
which is a principal $PGL(r,\mathbb{C})$-bundle, where $\mathcal{G}^{c}_{red}=\mathcal{G}^{c}/\mathbb{C}^{*}$ and
$\mathbb{C}^{*}\subseteq\mathcal{G}^{c}$ is the subgroup of multiples of the identity map (see page 133 of \cite{js}).

Our coarse moduli space $\mathcal{M}_{X}$ of simple holomorphic bundles then sits inside $\mathcal{A}^{(0,1)}_{si}/\mathcal{G}^{c}_{red}$ as component(s) of integrable connections. As Calabi-Yau $4n$-folds are spin manifolds of dimensions $8m$, there is a quadratic line bundle $(\mathcal{L}_{\mathbb{C}},Q)$ on $\widetilde{\mathcal{B}}_{X}$ from the previous discussion. Its pull-back to $\mathcal{A}^{(0,1)}_{si}/\mathcal{G}^{c}_{0}$ via the imbedding
\begin{equation}\mathcal{A}^{(0,1)}_{si}/\mathcal{G}^{c}_{0}\subseteq\widetilde{\mathcal{B}}_{X}^{*}\subseteq\widetilde{\mathcal{B}}_{X}
\nonumber \end{equation}
is $GL(r,\mathbb{C})$ invariant (changing the framing at $E_{x_{0}}$).
So it descends to a quadratic line bundle over $\mathcal{A}^{(0,1)}_{si}/\mathcal{G}^{c}_{red}$ (via $\beta$) (see also 5.4.2 of \cite{dk}).
On Calabi-Yau manifolds $X$'s, $\slashed{S}_{\mathbb{C}}(X)=\bigwedge^{0,*}(X)$ and
$\slashed{D}=\overline{\partial}$, the descended quadratic line bundle (over $\mathcal{A}^{(0,1)}_{si}/\mathcal{G}^{c}_{red}$)
pulls back to the determinant line bundle $\mathcal{L}_{\mathcal{M}_{X}}$ with $Q_{Serre}$ over $\mathcal{M}_{X}$.
By Theorem \ref{ori even}, the real line bundle $\mathcal{L}_{\mathbb{R}}$ associated to $(\mathcal{L}_{\mathbb{C}},Q)$
is trivial over $\mathcal{A}^{(0,1)}_{si}/\mathcal{G}^{c}_{0}$. As the fiber of the map $\beta$ is connected, $\mathcal{L}_{\mathbb{R}}$ descends to
a trivial line bundle over $\mathcal{A}^{(0,1)}_{si}/\mathcal{G}^{c}_{red}$, in particular over $\mathcal{M}_{X}$.
%and the Serre duality pairing defines a non-degenerate quadratic form on the index bundle $Ind(\slashed{\mathbb{D}}_{End\mathcal{E}})$. Then $c_{1}(Ind(\slashed{\mathbb{D}}_{End\mathcal{E}}))=0$ implies the structure group is reduced to $SO(\bullet,\mathbb{C})$, i.e. $\mathcal{L}_{\mathcal{M}_{X}}\cong \mathcal{O}_{\mathcal{M}_{X}}$.
\end{proof}
\begin{remark}
$H_{odd}(X,\mathbb{Z})=0$ holds true for complete intersections $X$'s in smooth toric varieties \cite{fulton1}.
\end{remark}
Given a Calabi-Yau $2n$-fold $X$, and a coarse moduli space $\mathcal{M}_{X}$ of simple sheaves, the Serre duality pairing
gives a non-degenerate quadratic form on the
determinant line bundle $\mathcal{L}_{\mathcal{M}_{X}}$, which defines an isomorphism
\begin{equation}\label{equ 1}\mathcal{L}_{\mathcal{M}_{X}}\otimes\mathcal{L}_{\mathcal{M}_{X}}\cong \mathcal{O}_{\mathcal{M}_{X}}.  \end{equation}
The above Theorem \ref{ori on even CY} in fact shows that we can find an isomorphism $\mathcal{L}_{\mathcal{M}_{X}}\cong\mathcal{O}_{\mathcal{M}_{X}}$
whose square is the above given isomorphism.

This fits into the work of Borisov and Joyce on the orientation of derived schemes with $k$-shifted symplectic structure for even $k\leq0$
(see Definition 2.11 of \cite{bj}). In general, derived moduli schemes of simple sheaves on Calabi-Yau $n$-folds are expected to
have $(2-n)$-shifted symplectic structures in the sense of Pantev, T\"{o}en, Vaqui\'{e} and Vezzosi \cite{ptvv}.
When $n$ is even, there is a canonical isomorphism like (\ref{equ 1}).
The orientability issue in this case is to find a square root of this isomorphism (see \cite{bj} for more details).

The above Theorem \ref{ori on even CY} partially solves this issue for
Calabi-Yau $4n$-folds (which correspond to $k=(2-4n)$-shifted cases). In fact, for Calabi-Yau $(4n+2)$-folds (i.e. $k\equiv0$ mod $4$),
the determinant line bundle has a canonical trivialization \cite{bj}. To explain it in a simple way, we take a simple sheaf $\mathcal{F}$,
the determinant line bundle $\mathcal{L}_{\mathcal{M}_{X}}$ satisfies
\begin{equation}\mathcal{L}_{\mathcal{M}_{X}}|_{\mathcal{F}}\cong det(Ext^{2n+1}(\mathcal{F},\mathcal{F})),  \nonumber \end{equation}
as other terms are of type $det(V\oplus V^{*})$ and have canonical trivializations. The Serre duality pairing defines a non-degenerate $2$-form
(instead of a quadratic form) on $Ext^{2n+1}(\mathcal{F},\mathcal{F})$ (see also Theorem \ref{rep of spin gp} at the level of
spin representations), which gives a canonical trivialization of $det(Ext^{2n+1}(\mathcal{F},\mathcal{F}))$ as in the
holomorphic symplectic case (see Mukai \cite{mukai}).
\begin{remark}
Index bundles could be understood as tangent bundles of moduli spaces in the derived sense. By Theorem \ref{ori on even CY} and the above discussion,
we can regard moduli spaces of simple sheaves on Calabi-Yau $2n$-folds as 'derived' Calabi-Yau spaces.
\end{remark}

\section{Orientability for odd dimensional Calabi-Yau }
In \cite{no}, Nekrasov and Okounkov gave a short proof of the existence of square roots of determinant line bundles for moduli spaces of sheaves
on Calabi-Yau $(2n+1)$-folds. However, sometimes it would be useful to make the square root compatible with other structures,
such as the wall-crossing structure in the sense of Kontsevich and Soibelman \cite{ks} on moduli spaces. In \cite{hua1}, Hua showed that finding
(wall-crossing compatible) square roots is related to find square roots of determinant line bundles over spaces of
gauge equivalent classes of connections. Following the argument of Donaldson \cite{dk}, he then used geometric transitions
to prove the existence of square roots for simply-connected torsion-free Calabi-Yau $3$-folds.

In this section, we will show that the existence of square roots is in fact a phenomenon in spin geometry, and
prove their existence over spaces of gauge equivalent classes of irreducible connections on spin manifolds $X$'s with
$dim_{\mathbb{R}}(X)=8k+2$ or $8k+6$.
\begin{theorem}\label{ori odd}
Let $X$ be a compact spin manifold of real dimension $8k+2$ or $8k+6$, $(E,h)\rightarrow X$ be a (Hermitian) complex vector bundle, and
$N\geq rk(E)+1$ be a positive integer.

Then there exists a $SU(N)$ complex vector bundle $E'$ and a continuous map
$s:\widetilde{\mathcal{B}}_{E,X}\rightarrow\widetilde{\mathcal{B}}_{E',X}$ such that the quotient of determinant line bundles
\begin{equation}\frac{det(Ind(\slashed{\mathbb{D}}_{End\mathcal{E}}))}{s^{*}det(Ind(\slashed{\mathbb{D}}_{End\mathcal{E'}}))} \nonumber \end{equation}
has a canonical square root.

Furthermore, if $N\gg0$, $det(Ind(\slashed{\mathbb{D}}_{End\mathcal{E'}}))$ has square roots whose choices are parametrized by
$Hom(H^{odd}(X,\mathbb{Z}),\mathbb{Z}_{2})$.
\end{theorem}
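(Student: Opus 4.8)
The plan is to follow the stabilization strategy of Theorem \ref{ori even}, but to extract a \emph{square root} of the difference of index bundles from the spinorial self-duality peculiar to real dimension $4k+2$, instead of a real structure. First I would complete $E$ to an $SU(N)$ bundle by setting $E'=E\oplus(\det E)^{-1}\oplus\mathbb{C}^{p}$ with $p=N-\mathrm{rk}(E)-1\geq0$ (so $\det E'\cong\mathcal{O}$), and define $s:\widetilde{\mathcal{B}}_{E,X}\to\widetilde{\mathcal{B}}_{E',X}$ by $s(A)=A\oplus(\det A)^{*}\oplus\theta$ as in the even case. Restricting $End\,\mathcal{E}'$ along $s$ splits it into the diagonal part $End\,\mathcal{E}\oplus End((\det\mathcal{E})^{-1})\oplus End(\mathbb{C}^{p})$, whose last two summands are product bundles, together with the off-diagonal part, which is a sum $\mathbb{V}\oplus\mathbb{V}^{*}$ of a genuine complex bundle $\mathbb{V}=\mathcal{H}om(\mathcal{E},(\det\mathcal{E})^{-1})\oplus\mathcal{H}om(\mathcal{E},\mathbb{C}^{p})\oplus\mathcal{H}om((\det\mathcal{E})^{-1},\mathbb{C}^{p})$ and its dual. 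Since the index of $\slashed{\mathbb{D}}$ coupled to a product bundle is constant, this gives a canonical isomorphism
\begin{equation} s^{*}\det(Ind(\slashed{\mathbb{D}}_{End\mathcal{E}'}))\cong \det(Ind(\slashed{\mathbb{D}}_{End\mathcal{E}}))\otimes\det(Ind(\slashed{\mathbb{D}}_{\mathbb{V}\oplus\mathbb{V}^{*}})). \nonumber \end{equation}

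The heart of the first assertion is then to produce a canonical square root of $\det(Ind(\slashed{\mathbb{D}}_{\mathbb{V}\oplus\mathbb{V}^{*}}))$. Here I would use that in dimension $4k+2$ the complex spinor bundles satisfy $\slashed{S}^{+}_{\mathbb{C}}(X)\cong(\slashed{S}^{-}_{\mathbb{C}}(X))^{*}$ as Clifford bundles (\cite{adams}, \cite{lawson}). For any complex bundle $W$ with unitary connection this identifies $\slashed{D}_{W^{*}}$ with the transpose of $\slashed{D}_{W}$, so that $\ker\slashed{D}_{W^{*}}\cong(\mathrm{coker}\,\slashed{D}_{W})^{*}$ and $\mathrm{coker}\,\slashed{D}_{W^{*}}\cong(\ker\slashed{D}_{W})^{*}$; hence $Ind(\slashed{\mathbb{D}}_{W^{*}})\cong-Ind(\slashed{\mathbb{D}}_{W})^{*}$ in $K(\widetilde{\mathcal{B}}_{X})$ and, applying $\det$, $\det(Ind(\slashed{\mathbb{D}}_{W^{*}}))\cong\det(Ind(\slashed{\mathbb{D}}_{W}))$. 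Taking $W=\mathbb{V}$ yields $\det(Ind(\slashed{\mathbb{D}}_{\mathbb{V}\oplus\mathbb{V}^{*}}))\cong\det(Ind(\slashed{\mathbb{D}}_{\mathbb{V}}))^{\otimes2}$, so the quotient in the statement is canonically isomorphic to $\det(Ind(\slashed{\mathbb{D}}_{\mathbb{V}}))^{-2}$ and $\det(Ind(\slashed{\mathbb{D}}_{\mathbb{V}}))^{-1}$ is the desired canonical square root. This is the step that uses the dimension hypothesis, and it is exactly the mechanism behind the symmetry $\sigma^{*}\mathcal{L}_{\mathbb{C}}\cong\mathcal{L}_{\mathbb{C}}$ advertised in the introduction. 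Note that this part needs only $N\geq\mathrm{rk}(E)+1$.

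For the second assertion I would work over $\widetilde{\mathcal{B}}_{E',X}$ with $N\gg0$ and split off the trivial trace summand, reducing to a square root of $\det(Ind(\slashed{\mathbb{D}}_{\mathfrak{g}_{\mathcal{E}'}}))$. The self-duality above supplies at each $[A]$ a perfect pairing $\ker\cong\mathrm{coker}^{*}$, hence a pointwise identification $\det(Ind)\cong(\det\ker)^{\otimes2}$, so the only issue is to promote this to a square root of line bundles; the obstruction is the divisibility of $c_{1}$ by $2$, i.e. a class in $H^{2}(\widetilde{\mathcal{B}}_{E',X},\mathbb{Z}_{2})$. I would show it vanishes for $N\gg0$ by computing $c_{1}$ through the families index theorem and using that the self-dual coupling bundle $\mathfrak{g}_{\mathcal{E}'}$ forces the relevant degree-two characteristic class to be even. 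Granting existence, the set of square roots is acted on simply transitively by $H^{1}(\widetilde{\mathcal{B}}_{E',X},\mathbb{Z}_{2})$. To compute this group I would run the Federer spectral sequence \cite{mccleary} together with the Atiyah-Bott homotopy equivalence $B\mathcal{G}\simeq Map_{E}(X,BSU(N))$ \cite{ab1} exactly as in Theorem \ref{ori even}; for $N\gg0$ this gives $\pi_{1}(\widetilde{\mathcal{B}}_{E',X})\cong\pi_{0}(\mathcal{G})\cong\bigoplus_{k\geq1}H^{2k+1}(X,\mathbb{Z})=H^{odd}(X,\mathbb{Z})$, whence $H^{1}(\widetilde{\mathcal{B}}_{E',X},\mathbb{Z}_{2})\cong Hom(H^{odd}(X,\mathbb{Z}),\mathbb{Z}_{2})$, as claimed.

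The main obstacle is the existence part of the second assertion. The self-duality only gives a \emph{pointwise} square $\det(Ind)\cong(\det\ker)^{\otimes2}$, and since $\ker\slashed{D}$ and $\mathrm{coker}\,\slashed{D}$ jump in families the factor $\det\ker$ is not a global line bundle, so one cannot simply take its square root. Moreover the $K$-theoretic relation $Ind(\slashed{\mathbb{D}}_{End\mathcal{E}'})\cong-Ind(\slashed{\mathbb{D}}_{End\mathcal{E}'})^{*}$ is of anti-self-dual type and imposes no constraint on $c_{1}$ by itself, so the evenness of $c_{1}$ must be genuinely extracted from the families index computation together with the dimension hypothesis. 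Controlling this $\mathbb{Z}_{2}$-obstruction for $N\gg0$, rather than the essentially formal first assertion or the homotopy computation of the torsor, is where the real work lies, and it is the point at which the argument makes contact with the existence results of Nekrasov-Okounkov \cite{no} and Hua \cite{hua1}.
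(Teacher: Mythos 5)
Your first assertion and your description of the torsor of square roots are essentially the paper's own argument: the stabilization $E'=E\oplus(\det E)^{-1}\oplus\mathbb{C}^{p}$, the splitting of $End\,E'$ into a diagonal part and an off-diagonal part $\mathbb{V}\oplus\mathbb{V}^{*}$, the isomorphism $\det(Ind(\slashed{\mathbb{D}}_{\mathbb{V}\oplus\mathbb{V}^{*}}))\cong\det(Ind(\slashed{\mathbb{D}}_{\mathbb{V}}))^{\otimes2}$ coming from $\slashed{S}^{+}_{\mathbb{C}}(X)\cong(\slashed{S}^{-}_{\mathbb{C}}(X))^{*}$ (this is exactly Corollary \ref{cor}, which you rederive rather than cite), and the Federer/Atiyah--Bott computation giving $H^{1}(\widetilde{\mathcal{B}}_{E',X},\mathbb{Z}_{2})\cong Hom(H^{odd}(X,\mathbb{Z}),\mathbb{Z}_{2})$ for $N\gg0$. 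The genuine gap is the one you yourself flag: the \emph{existence} of a square root of $\det(Ind(\slashed{\mathbb{D}}_{End\mathcal{E'}}))$, i.e.\ the evenness of its first Chern class. Your sketch contains no argument for this, only the hope that ``computing $c_{1}$ through the families index theorem'' will force evenness. That route cannot work as stated: the Atiyah--Singer families index theorem produces $c_{1}$ as a rational characteristic-class expression, and no identity in $H^{*}(\,\cdot\,,\mathbb{Q})$ can certify divisibility by $2$ in $H^{2}(\,\cdot\,,\mathbb{Z})$; moreover, as you correctly observe, the only structural consequence of self-duality over $\widetilde{\mathcal{B}}_{E',X}$ itself, namely $Ind\cong-Ind^{*}$ and hence $\mathcal{L}\cong\mathcal{L}$, imposes no constraint at all. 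So the second assertion remains unproved in your proposal.

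The missing idea, which is how the paper proceeds following Nekrasov--Okounkov \cite{no}, is to \emph{de-diagonalize} the self-duality: couple the Dirac operator to $\mathcal{H}om(\mathcal{E}'_{1},\mathcal{E}'_{2})$ for two independent connections, so that $\mathcal{L}=\det(Ind(\slashed{\mathbb{D}}_{End\mathcal{E'}}))$ extends to a line bundle over the product $\widetilde{\mathcal{B}}_{E',X}\times\widetilde{\mathcal{B}}_{E',X}$. There Theorem \ref{thm 8k+2} gives a non-vacuous statement: $\sigma^{*}\mathcal{L}\cong\mathcal{L}$ for the swap involution $\sigma([A_{1}],[A_{2}])=([A_{2}],[A_{1}])$. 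Writing $c_{1}(\mathcal{L})$ mod $2$ in the K\"{u}nneth basis of $H^{2}$ of the product, with coefficient matrices $n_{ij}$, $m_{ij}$, $k_{ij}$ on the $H^{0}\otimes H^{2}$, $H^{2}\otimes H^{0}$ and $H^{1}\otimes H^{1}$ pieces, the symmetry forces $m_{ji}\equiv n_{ij}$ and $k_{ji}\equiv k_{ij}$ mod $2$, and upon restriction to the diagonal $\Delta$ --- where $\mathcal{L}|_{\Delta}$ is precisely the line bundle whose square root you need --- the terms cancel in pairs, giving $c_{1}(\mathcal{L}|_{\Delta})\equiv0$ (mod $2$). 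This swap trick is exactly the device that converts the pointwise duality $\ker\cong\mathrm{coker}^{*}$ (which, as you note, does not globalize because kernels jump) into mod $2$ information about the determinant line bundle; without it, or an equivalent mechanism, your outline does not yield the existence claim, which is the heart of the theorem.
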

\begin{proof}
As in the proof of Theorem \ref{ori even}, by considering $E^{'}=E\oplus (detE)^{-1}\oplus \mathbb{C}^{p}$, we have a stabilization map
\begin{equation}s: \widetilde{\mathcal{B}}_{E,X}\rightarrow \widetilde{\mathcal{B}}_{E',X}, \quad s(A)=A\oplus (det(A))^{*}\oplus\theta
\nonumber \end{equation}
%\begin{equation}s(A)=A\oplus det(A)^{-1}\oplus\theta, \nonumber \end{equation}
where $\theta$ is the rank $p$ product connection. When a $SU(N)$ connection on $E'$ decomposes as $A\oplus (det(A))^{*}\oplus\theta$, there is a
decomposition of the endomorphism bundle
%\begin{equation}\mathfrak{g}_{E'}=\mathfrak{g}_{E}\oplus V\oplus \mathfrak{g}_{\mathbb{C}^{p}}, \quad V\triangleq ((detE)\otimes E)\oplus ((\mathbb{C}^{p})^{*}\otimes E)\oplus ((detE)\otimes\mathbb{C}^{p}).  \nonumber \end{equation}
\begin{equation}EndE'=EndE\oplus T^{*}V\oplus End(\mathbb{C}^{p}), \quad V\triangleq ((detE)\otimes E)\oplus
((\mathbb{C}^{p})^{*}\otimes E)\oplus ((detE)\otimes\mathbb{C}^{p}).  \nonumber \end{equation}
The index of any operator coupled with this bundle (with connection) is a sum of corresponding terms.
In the obvious notation, we have
\begin{equation}s^{*}(det(Ind(\slashed{\mathbb{D}}_{End\mathcal{E}'})))=det(Ind(\slashed{\mathbb{D}}_{End\mathcal{E}}))\otimes
det(Ind(\slashed{\mathbb{D}}_{T^{*}\mathbb{V}}))\otimes det(Ind(\slashed{\mathbb{D}}_{End\mathbb{C}^{p}})).    \nonumber \end{equation}
By Corollary \ref{cor}, $det(Ind(\slashed{\mathbb{D}}_{T^{*}\mathbb{V}}))\cong
(det(Ind(\slashed{\mathbb{D}}_{\mathbb{V}})))^{\otimes2}$, which has a canonical square root.
$det(Ind(\slashed{\mathbb{D}}_{End\mathbb{C}^{p}})$ is canonically trivial as $\mathbb{C}^{p}$ is a product bundle.
Then
\begin{equation}s^{*}(det(Ind(\slashed{\mathbb{D}}_{End\mathcal{E}'})))\cong det(Ind(\slashed{\mathbb{D}}_{End\mathcal{E}}))\otimes
(det(Ind(\slashed{\mathbb{D}}_{\mathbb{V}})))^{\otimes2}. \nonumber \end{equation}
Following Nekrasov and Okounkov \cite{no}, we consider an involution map
\begin{equation}\sigma: \widetilde{\mathcal{B}}_{E',X}\times \widetilde{\mathcal{B}}_{E',X}\rightarrow
\widetilde{\mathcal{B}}_{E',X}\times \widetilde{\mathcal{B}}_{E',X}, \nonumber \end{equation}
\begin{equation}\sigma([A_{1}],[A_{2}])=([A_{2}],[A_{1}]).
\nonumber \end{equation}
We denote the extended determinant line bundle to be $\mathcal{L}=det(Ind(\slashed{\mathbb{D}}_{End\mathcal{E'}}))\rightarrow
\widetilde{\mathcal{B}}_{E',X}\times \widetilde{\mathcal{B}}_{E',X}$.
By applying the canonical isomorphism in Theorem \ref{thm 8k+2} to $EndE$, we can obtain
\begin{equation}\label{equality 2}\sigma^{*}\mathcal{L}\cong\mathcal{L}.
\end{equation}
Now we are reduced to prove $c_{1}(\mathcal{L}|_{\Delta})\equiv0 \textrm{ }(mod \textrm{ }2)$, where
$\Delta\hookrightarrow \widetilde{\mathcal{B}}_{E',X}\times \widetilde{\mathcal{B}}_{E',X}$ is the diagonal.

By the K\"{u}nneth formula,
\begin{equation}H^{2}(\widetilde{\mathcal{B}}_{E',X}\times \widetilde{\mathcal{B}}_{E',X},\mathbb{Z}_{2})\cong
H^{0}(\widetilde{\mathcal{B}}_{E',X},\mathbb{Z}_{2})\otimes H^{2}(\widetilde{\mathcal{B}}_{E',X},\mathbb{Z}_{2})\oplus  \nonumber \end{equation}
\begin{equation}
\oplus H^{2}(\widetilde{\mathcal{B}}_{E',X},\mathbb{Z}_{2})\otimes H^{0}(\widetilde{\mathcal{B}}_{E',X},\mathbb{Z}_{2})\oplus
H^{1}(\widetilde{\mathcal{B}}_{E',X},\mathbb{Z}_{2})\otimes H^{1}(\widetilde{\mathcal{B}}_{E',X},\mathbb{Z}_{2}).      \nonumber \end{equation}
Assume $\{a_{i}\}$ is a basis of $H^{0}(\widetilde{\mathcal{B}}_{E',X},\mathbb{Z}_{2})$, $\{b_{i}\}$ is a basis of $H^{2}(\widetilde{\mathcal{B}}_{E',X},\mathbb{Z}_{2})$,
$\{c_{i}\}$ is a basis of $H^{1}(\widetilde{\mathcal{B}}_{E',X},\mathbb{Z}_{2})$, and
\begin{equation}c_{1}(\mathcal{L})\equiv\sum_{i,j}n_{ij}a_{i}\otimes b_{j}+ \sum_{i,j}m_{ij}b_{i}\otimes a_{j}+ \sum_{i,j}k_{ij}c_{i}\otimes c_{j}\textrm{ } (mod \textrm{ }2). \nonumber \end{equation}
Under the action of the involution map $\sigma$,
\begin{equation}\sigma^{*}\big(c_{1}(\mathcal{L})\big)\equiv\sum_{i,j}m_{ij}a_{j}\otimes b_{i}+ \sum_{i,j}n_{ij}b_{j}\otimes a_{i}+
\sum_{i,j}k_{ij}c_{j}\otimes c_{i} \textrm{ } (mod \textrm{ }2).\nonumber \end{equation}
By (\ref{equality 2}), we obtain $m_{ji}\equiv n_{ij}\textrm{ }(mod \textrm{ }2)$, $k_{ji}\equiv k_{ij}\textrm{ }(mod \textrm{ }2)$. When we restrict to the diagonal,
%\begin{equation}w_{1}(\mathcal{L})=\sum_{i}n_{ij}(a_{ij}\otimes b_{j}+b_{i}\otimes a_{i})  \nonumber \end{equation}
\begin{equation}c_{1}(\mathcal{L}|_{\Delta})\equiv\sum_{i,j}n_{ij}(a_{i}\cup b_{j}+b_{j}\cup a_{i})\equiv0 \textrm{ }(mod \textrm{ }2).
\nonumber \end{equation}
If $N\gg0$, we have $H_{1}(\widetilde{\mathcal{B}}_{E',X},\mathbb{Z})\cong H^{odd}(X,\mathbb{Z})$ as showed in Theorem \ref{ori even}.
Meanwhile, the choice of square roots of any complex line bundle over a space $W$ is parametrized by $H^{1}(W,\mathbb{Z}_{2})$.
\end{proof}
We fix a Calabi-Yau $(2n+1)$-fold $X$, and denote the determinant line bundle of a coarse moduli space $\mathcal{M}_{X}$ of simple sheaves by
$\mathcal{L}_{\mathcal{M}_{X}}$  with $\mathcal{L}_{\mathcal{M}_{X}}|_{\mathcal{F}}\cong
det(Ext^{odd}(\mathcal{F},\mathcal{F}))\otimes det(Ext^{even}(\mathcal{F},\mathcal{F}))^{-1}$.
\begin{theorem}\label{ori on odd CY}
Let $X$ be a projective Calabi-Yau $(2n+1)$-fold, $\mathcal{M}_{X}$ be a proper coarse moduli scheme of simple sheaves with fixed Chern classes.

Then there is a $1$-$1$ correspondence between the set of principal $\mathbb{Z}_{2}$-bundles on $\mathcal{M}_{X}$ and the set of
algebraic square roots of
$\mathcal{O}_{\mathcal{M}^{red}_{X}}$. Moreover,
each element in $Hom(H^{odd}(X,\mathbb{Z}),\mathbb{Z}_{2})$ determines an algebraic square root of
$\mathcal{L}_{\mathcal{M}_{X}}|_{\mathcal{M}_{X}^{red}}$ over the reduced scheme $\mathcal{M}_{X}^{red}$.
\end{theorem}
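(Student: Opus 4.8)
The plan is to transfer the analytic/gauge-theoretic statement of Theorem \ref{ori odd} onto the algebraic moduli scheme $\mathcal{M}_X$, exactly as Theorem \ref{ori on odd CY} is the algebraic counterpart of Theorem \ref{ori even} in the even case. First I would establish the $1$-$1$ correspondence between principal $\mathbb{Z}_2$-bundles on $\mathcal{M}_X$ and algebraic square roots of the structure sheaf $\mathcal{O}_{\mathcal{M}_X^{red}}$: a principal $\mathbb{Z}_2$-bundle is classified by a class in $H^1(\mathcal{M}_X^{red},\mathbb{Z}_2)$, and the associated rank-one object (the $\pm1$-eigenspace construction, i.e. the line bundle associated via the sign representation $\mathbb{Z}_2 \to \{\pm 1\}$) is precisely a $2$-torsion line bundle, hence a square root of $\mathcal{O}_{\mathcal{M}_X^{red}}$. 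Conversely, any square root $L$ of $\mathcal{O}_{\mathcal{M}_X^{red}}$ is a $2$-torsion line bundle and so gives a class in $H^1(\mathcal{M}_X^{red},\mathbb{Z}_2)$; I would note the standard exact sequence $H^1(-,\mathbb{Z}_2) \to \mathrm{Pic}(-)[2] \to 0$ (from the Kummer/exponential-type sequence with $\mathbb{Z}_2$-coefficients) to make this bijective on the reduced scheme, where we are working over $\mathbb{C}$ with the analytic topology. Since we only ever restrict to $\mathcal{M}_X^{red}$, the distinction between reduced and non-reduced structure does not obstruct this purely topological classification.

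The second and main part is to produce, from each element of $\mathrm{Hom}(H^{odd}(X,\mathbb{Z}),\mathbb{Z}_2)$, an actual algebraic square root of $\mathcal{L}_{\mathcal{M}_X}|_{\mathcal{M}_X^{red}}$. As in the proof of Theorem \ref{ori on even CY}, by the Seidel-Thomas twist machinery (Corollary \ref{st preserves L}) I may reduce to the case where $\mathcal{M}_X$ parametrizes simple holomorphic structures on a fixed complex bundle $E$. On a Calabi-Yau $(2n+1)$-fold one has $\slashed{S}_{\mathbb{C}}(X) = \bigwedge^{0,*}(X)$ and $\slashed{D} = \overline{\partial}$, and the real dimension $4n+2$ is of the form $8k+2$ or $8k+6$, so Theorem \ref{ori odd} applies directly to the space $\widetilde{\mathcal{B}}_{E',X}$ of framed connections. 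The plan is then to embed $\mathcal{M}_X$ into $\mathcal{A}^{(0,1)}_{si}/\mathcal{G}^c_{red}$ (via the same chain $\mathcal{A}^{(0,1)}_{si}/\mathcal{G}^c_0 \subseteq \widetilde{\mathcal{B}}^*_X \subseteq \widetilde{\mathcal{B}}_X$ used in the even case) and pull back the square root of $det(Ind(\slashed{\mathbb{D}}_{End\mathcal{E}'}))$ furnished by Theorem \ref{ori odd}. The stabilization contributes only the factor $(det(Ind(\slashed{\mathbb{D}}_{\mathbb{V}})))^{\otimes 2}$ together with the canonically trivial $End(\mathbb{C}^p)$ factor, so a square root of the $E'$-side descends to a square root of $\mathcal{L}_{\mathcal{M}_X}$ on $\mathcal{M}_X^{red}$. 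The parametrization by $\mathrm{Hom}(H^{odd}(X,\mathbb{Z}),\mathbb{Z}_2) \cong H^1(\widetilde{\mathcal{B}}_{E',X},\mathbb{Z}_2)$ is transported along this pullback.

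The subtle points, which I expect to be the main obstacle, are twofold. First, Theorem \ref{ori odd} produces a \emph{topological} (continuous) square root over the Banach manifold $\widetilde{\mathcal{B}}_{E',X}$, whereas the statement here demands an \emph{algebraic} square root over the scheme $\mathcal{M}_X^{red}$; I would resolve this by invoking the bijection from the first paragraph to upgrade the topological $2$-torsion class to an algebraic line bundle on $\mathcal{M}_X^{red}$, using that $\mathcal{M}_X$ is a proper scheme so that its (analytic) $\mathrm{Pic}[2]$ agrees with the algebraic one via GAGA. Second, I must check that the descent along $\beta$ and the pullback are compatible with the quadratic/Serre-duality structure so that the square root of the abstract index determinant really matches $\mathcal{L}_{\mathcal{M}_X}|_{\mathcal{F}} \cong det(Ext^{odd}(\mathcal{F},\mathcal{F})) \otimes det(Ext^{even}(\mathcal{F},\mathcal{F}))^{-1}$; this is the identification $\slashed{D} = \overline{\partial}$ together with Serre duality, and the connectedness of the fibers of $\beta$ (a $PGL(r,\mathbb{C})$-bundle) guarantees that the square root descends. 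Once these identifications are in place, the correspondence and the explicit parametrization follow formally.
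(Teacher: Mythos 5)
Your overall strategy is the same as the paper's: use Theorem \ref{ori odd} (via the Seidel--Thomas reduction to holomorphic bundles and the embedding of $\mathcal{M}_{X}$ into the gauge-theoretic quotient, exactly as in Theorem \ref{ori on even CY}) to obtain topological square roots of $\mathcal{L}_{\mathcal{M}_{X}}$ parametrized by $Hom(H^{odd}(X,\mathbb{Z}),\mathbb{Z}_{2})$, and use the Kummer-type sequence $1\rightarrow\mathbb{Z}_{2}\rightarrow\mathcal{O}^{*}_{\mathcal{M}_{X}^{red}}\rightarrow\mathcal{O}^{*}_{\mathcal{M}_{X}^{red}}\rightarrow1$ to identify principal $\mathbb{Z}_{2}$-bundles with algebraic square roots of $\mathcal{O}_{\mathcal{M}_{X}^{red}}$. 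For the $1$-$1$ correspondence, note that your cited sequence $H^{1}(-,\mathbb{Z}_{2})\rightarrow \mathrm{Pic}(-)[2]\rightarrow0$ only gives surjectivity; the injectivity is exactly where properness enters (the kernel of $H^{1}(\mathbb{Z}_{2})\rightarrow H^{1}(\mathcal{O}^{*})$ is the image of the connecting map from $H^{0}(\mathcal{O}^{*})$, which vanishes because global functions on a proper reduced scheme are locally constant, hence all global units are squares). This is a fixable omission, and the paper spells it out.

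The genuine gap is in your third paragraph, where you pass from the topological square root of $\mathcal{L}_{\mathcal{M}_{X}}$ to an algebraic one. A square root of $\mathcal{L}_{\mathcal{M}_{X}}$ is \emph{not} a $2$-torsion class (the paper's quintic example shows $c_{1}(\mathcal{L}_{\mathcal{M}_{X}})\neq0$ in general), so the bijection of your first paragraph --- which concerns only square roots of $\mathcal{O}_{\mathcal{M}_{X}^{red}}$, i.e.\ $\mathrm{Pic}[2]$ --- cannot be applied to it; and GAGA compares \emph{holomorphic} line bundles with algebraic ones, not \emph{continuous} line bundles with algebraic ones, so invoking "analytic $\mathrm{Pic}[2]$ agrees with the algebraic one via GAGA" does not bridge from a merely topological square root to an algebraic one. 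What is missing is the existence of at least one algebraic square root of $\mathcal{L}_{\mathcal{M}_{X}}|_{\mathcal{M}_{X}^{red}}$; once one exists, the set of all algebraic square roots is a torsor under $\mathrm{Pic}(\mathcal{M}_{X}^{red})[2]$ and your correspondence does finish the argument. The paper supplies this missing step by citing Lemma 6.1 of Nekrasov--Okounkov \cite{no}: since Theorem \ref{ori odd} makes $c_{1}(\mathcal{L}_{\mathcal{M}_{X}})$ even, $\mathcal{L}_{\mathcal{M}_{X}}|_{\mathcal{M}_{X}^{red}}$ admits an algebraic square root. Equivalently, one can argue directly: evenness of the topological first Chern class, combined with the exponential sequence on the compact analytic space underlying $\mathcal{M}_{X}^{red}$ (using that $H^{2}(\mathcal{O})$ is a torsion-free $\mathbb{C}$-vector space and that the kernel of $c_{1}$ on the analytic Picard group is divisible), produces a \emph{holomorphic} square root, which is then algebraic by GAGA on the proper scheme. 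This divisibility argument is precisely the content your proposal skips, and without it the proof does not close.
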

\begin{proof}
As in the proof of Theorem \ref{ori on even CY}, using Theorem \ref{ori odd}, each element in $Hom(H^{odd}(X,\mathbb{Z}),\mathbb{Z}_{2})$
determines a (topological) square root of $\mathcal{L}_{\mathcal{M}_{X}}$.
By Lemma 6.1 \cite{no}, $\mathcal{L}_{\mathcal{M}_{X}}|_{\mathcal{M}_{X}^{red}}$ has an algebraic square root as
$c_{1}(\mathcal{L}_{\mathcal{M}_{X}})$ is even.
To show those (topological) square roots are algebraic square roots over $\mathcal{M}_{X}^{red}$,
we are left to show any principal $\mathbb{Z}_{2}$-bundle
is an algebraic square root of $\mathcal{O}_{\mathcal{M}_{X}^{red}}$.

From the short exact sequence
\begin{equation} \xymatrix@1{1\rightarrow\mathbb{Z}_{2}\rightarrow\mathcal{O}^{*}_{\mathcal{M}_{X}^{red}}\ar[r]^{\quad f\mapsto f^{2}} &
\mathcal{O}^{*}_{\mathcal{M}_{X}^{red}}\rightarrow 1 }, \nonumber \end{equation}
we obtain an exact sequence
\begin{equation}\xymatrix@1{0\rightarrow H^{0}(\mathcal{M}_{X}^{red},\mathbb{Z}_{2})\rightarrow H^{0}(\mathcal{M}_{X}^{red},\mathcal{O}^{*}_{\mathcal{M}_{X}^{red}})
\ar[r]^{\quad \quad\quad \quad f\mapsto f^{2}} & H^{0}(\mathcal{M}_{X}^{red},\mathcal{O}^{*}_{\mathcal{M}_{X}^{red}})\rightarrow  } \nonumber \end{equation}
\begin{equation}\xymatrix@1{\rightarrow H^{1}(\mathcal{M}_{X}^{red},\mathbb{Z}_{2})\ar[r]^{i} &
H^{1}(\mathcal{M}_{X}^{red},\mathcal{O}^{*}_{\mathcal{M}_{X}^{red}}) \ar[r]^{L\mapsto L^{2}\quad } &
H^{1}(\mathcal{M}_{X}^{red},\mathcal{O}^{*}_{\mathcal{M}_{X}^{red}})\rightarrow\cdot\cdot\cdot }.   \nonumber \end{equation}
As $\mathcal{M}_{X}^{red}$ is proper, any algebraic function on it is locally constant (ref. 10.3.7 of \cite{vakil}). So the above sequence splits and
the map $H^{1}(\mathcal{M}_{X}^{red},\mathbb{Z}_{2})\rightarrow H^{1}(\mathcal{M}_{X}^{red},\mathcal{O}^{*}_{\mathcal{M}_{X}})$ is injective.
Using the remaining exact sequence, it is obvious that the corresponding algebraic line bundle (image under map $i$)
is a square root of $\mathcal{O}_{\mathcal{M}_{X}^{red}}$.
\end{proof}
\begin{remark}
Based on Joyce's definition of orientations for d-critical locus \cite{joyce-d}, we only need square roots of determinant line bundles
over the reduced moduli schemes to categorify $DT_{3}$ invariants \cite{bbdjs}.
\end{remark}
The following example shows that one could not expect to get vanishing of first Chern classes of determinant line bundles for odd dimensional $CY$.
\begin{example}
Let $X$ be a generic quintic 3-fold, and consider the Hilbert scheme of two points on $X$ (which is smooth), i.e.
$Hilb^{(2)}(X)=Bl_{\Delta}(X\times X)/\mathbb{Z}_{2}$, where $\Delta\hookrightarrow X\times X$ is the diagonal.
Its determinant line bundle satisfies $c_{1}(\mathcal{L}_{Hilb^{(2)}(X)})=2c_{1}(Hilb^{(2)}(X))\neq0$.
\end{example}

\section{Orientability for the relative case }

\subsection{The weak orientability result}
We take a smooth (Calabi-Yau) $(2n-1)$-fold $Y$ in a smooth complex projective $2n$-fold $X$ as its anti-canonical divisor.
We denote $\mathcal{M}_{X}$ to be a coarse moduli space of simple bundles on $X$ with fixed Chern classes which has a well-defined restriction morphism
\begin{equation}r: \mathcal{M}_{X}\rightarrow \mathcal{M}_{Y}, \nonumber \end{equation}
to a coarse moduli space of simple bundles on $Y$ with fixed Chern classes.

The corresponding restriction morphism between reduced schemes\footnote{It is uniquely determined by $r$, see Ex. 2.3 \cite{hart}.} is still denoted by
\begin{equation}r: \mathcal{M}_{X}^{red}\rightarrow \mathcal{M}_{Y}^{red}. \nonumber \end{equation}
By Theorem \ref{ori on odd CY}, there exists square roots of $\mathcal{L}_{\mathcal{M}_{Y}}|_{\mathcal{M}^{red}_{Y}}$ coming from the restriction of
square roots of the determinant line bundle of the index bundle of twisted Dirac operators over the space of connections.
The following relative orientability result for the morphism $r$ gives an identification of complex line bundles
$r^{*}(\mathcal{L}_{\mathcal{M}_{Y}}|_{\mathcal{M}^{red}_{Y}})^{\frac{1}{2}}\cong\mathcal{L}_{\mathcal{M}_{X}}|_{\mathcal{M}^{red}_{X}}$.
\begin{theorem}(Weak relative orientability)\label{ori on rel} ${}$ \\
Let $Y$ be a smooth anti-canonical divisor in a projective $2n$-fold $X$ with $Tor(H_{*}(X,\mathbb{Z}))=0$, $E\rightarrow X$ be a
complex vector bundle with structure group $SU(N)$, where $N\gg0$.
Let $\mathcal{M}_{X}$ be a coarse moduli scheme of simple holomorphic structures on $E$, which has a well-defined restriction morphism
\begin{equation}r: \mathcal{M}_{X}\rightarrow \mathcal{M}_{Y}, \nonumber \end{equation}
to a proper coarse moduli scheme of simple bundles on $Y$ with fixed Chern classes.

Then there exists an algebraic square root $(\mathcal{L}_{\mathcal{M}_{Y}}|_{\mathcal{M}^{red}_{Y}})^{\frac{1}{2}}$ of
$\mathcal{L}_{\mathcal{M}_{Y}}|_{\mathcal{M}^{red}_{Y}}$ such that
\begin{equation}c_{1}(\mathcal{L}_{\mathcal{M}_{X}}|_{\mathcal{M}^{red}_{X}})
=r^{*}c_{1}((\mathcal{L}_{\mathcal{M}_{Y}}|_{\mathcal{M}^{red}_{Y}})^{\frac{1}{2}}),
\nonumber \end{equation}
%\begin{equation}\mathcal{L}_{\mathcal{M}_{X}}\cong r^{*}(\mathcal{L}_{\mathcal{M}_{Y}}^{\frac{1}{2}}), \nonumber \end{equation}
where $\mathcal{L}_{\mathcal{M}_{X}}$ (resp. $\mathcal{L}_{\mathcal{M}_{Y}}$) is the determinant line bundle of $\mathcal{M}_{X}$ (resp. $\mathcal{M}_{Y}$).
\end{theorem}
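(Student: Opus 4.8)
The plan is to prove the stated equality of first Chern classes upstairs, on the spaces of framed connections, where a universal-coefficient argument forces the relevant $2$-torsion discrepancy to vanish, and then to descend to the reduced moduli schemes exactly as in Theorems \ref{ori on even CY} and \ref{ori on odd CY}.

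First I would lift the determinant-line-bundle relation to the gauge-theoretic setting. Choosing the base point $x_{0}$ on $Y$, restriction of connections to the anti-canonical divisor defines a map $\rho:\widetilde{\mathcal{B}}_{E,X}\to\widetilde{\mathcal{B}}_{E|_{Y},Y}$. The short exact sequence $0\to End\mathcal{E}\otimes K_{X}\to End\mathcal{E}\to End\mathcal{E}|_{Y}\to0$ coming from $Y\in|-K_{X}|$, combined with Serre duality on the $2n$-fold $X$ (which pairs the $Ext^{i}_{X}$ terms with the $Ext^{2n-i}_{X}$ terms), expresses the index determinant on $Y$ as the square of the one on $X$; in families this is the isomorphism
\begin{equation}
(\mathcal{L}^{\mathbb{C}}_{X})^{\otimes2}\cong\rho^{*}\mathcal{L}^{\mathbb{C}}_{Y} \nonumber
\end{equation}
of determinant line bundles over $\widetilde{\mathcal{B}}_{E,X}$, the family version of Lemma \ref{rel ori lemma}.

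Next, since $Y$ is Calabi-Yau of odd complex dimension $2n-1$ its real dimension is $8k+2$ or $8k+6$, so Theorem \ref{ori odd} applies directly to the $SU(N)$ bundle $E|_{Y}$ (here $N\gg0$) and produces a square root $(\mathcal{L}^{\mathbb{C}}_{Y})^{1/2}$ over $\widetilde{\mathcal{B}}_{E|_{Y},Y}$, whose choices form a torsor under $Hom(H^{odd}(Y,\mathbb{Z}),\mathbb{Z}_{2})$. Fixing one such root, the displayed isomorphism shows that $\mathcal{L}^{\mathbb{C}}_{X}$ and $\rho^{*}(\mathcal{L}^{\mathbb{C}}_{Y})^{1/2}$ are two square roots of $\rho^{*}\mathcal{L}^{\mathbb{C}}_{Y}$ over $\widetilde{\mathcal{B}}_{E,X}$, whence
\begin{equation}
2\big(c_{1}(\mathcal{L}^{\mathbb{C}}_{X})-\rho^{*}c_{1}((\mathcal{L}^{\mathbb{C}}_{Y})^{1/2})\big)=0\quad\text{in }H^{2}(\widetilde{\mathcal{B}}_{E,X},\mathbb{Z}). \nonumber
\end{equation}
The key observation is that this $2$-torsion class must be zero. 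As in Theorem \ref{ori even}, the Federer spectral sequence together with $N\gg0$ gives $H_{1}(\widetilde{\mathcal{B}}_{E,X},\mathbb{Z})\cong H^{odd}(X,\mathbb{Z})$, which is torsion-free by the hypothesis $Tor(H_{*}(X,\mathbb{Z}))=0$. By the universal coefficient theorem the torsion subgroup of $H^{2}(\widetilde{\mathcal{B}}_{E,X},\mathbb{Z})$ equals $Ext^{1}(H_{1},\mathbb{Z})=0$, so the difference above vanishes and $c_{1}(\mathcal{L}^{\mathbb{C}}_{X})=\rho^{*}c_{1}((\mathcal{L}^{\mathbb{C}}_{Y})^{1/2})$ already over $\widetilde{\mathcal{B}}_{E,X}$, for every choice of square root.

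Finally I would descend to the moduli schemes. As in the proof of Theorem \ref{ori on even CY}, the coarse moduli of simple holomorphic structures on $X$ and on $Y$ embed into the relevant $\mathcal{A}^{(0,1)}_{si}/\mathcal{G}^{c}_{red}$, compatibly with $r$ and $\rho$, and the algebraic determinant bundles $\mathcal{L}_{\mathcal{M}_{X}},\mathcal{L}_{\mathcal{M}_{Y}}$ are pulled back from the descended index determinants. Restricting the identity of the previous paragraph along these embeddings and passing to reduced schemes yields $c_{1}(\mathcal{L}_{\mathcal{M}_{X}}|_{\mathcal{M}^{red}_{X}})=r^{*}c_{1}((\mathcal{L}_{\mathcal{M}_{Y}}|_{\mathcal{M}^{red}_{Y}})^{1/2})$; that the chosen root is an \emph{algebraic} square root over the proper scheme $\mathcal{M}^{red}_{Y}$ is exactly Theorem \ref{ori on odd CY}. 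The step I expect to be most delicate is the family lift of the determinant relation: making the normal-bundle sequence and the Serre-duality identification of index bundles genuinely $\rho$-compatible over the whole connection space, and then matching the resulting analytic determinant line bundles with the algebraic $\mathcal{L}_{\mathcal{M}_{X}},\mathcal{L}_{\mathcal{M}_{Y}}$ on the reduced schemes. The torsion-freeness input, by contrast, is what makes the weak ($c_{1}$-level) statement clean, whereas lifting the matching from $c_{1}$ to an honest isomorphism of line bundles would require the discrepancy to lie in the image of $\rho^{*}$ and accounts for why only a weak form is obtained here.
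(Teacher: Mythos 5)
Your argument has the right scaffolding (the square root on $Y$ from Theorems \ref{ori odd} and \ref{ori on odd CY}, torsion-freeness of $H^{2}$ via the Federer spectral sequence and the hypothesis $Tor(H_{*}(X,\mathbb{Z}))=0$, descent to the moduli schemes), and these pieces do coincide with the paper's. But the step you yourself flag as "most delicate" --- the family version of Lemma \ref{rel ori lemma}, i.e. the isomorphism $(\mathcal{L}^{\mathbb{C}}_{X})^{\otimes2}\cong\rho^{*}\mathcal{L}^{\mathbb{C}}_{Y}$ over all of $\widetilde{\mathcal{B}}_{E,X}$ --- is a genuine gap, and your proposed justification cannot close it. A point of $\widetilde{\mathcal{B}}_{E,X}$ is an arbitrary, generally non-integrable, unitary connection: there are no $Ext$ groups, no $R\mathcal{H}om$, and no Grothendieck--Serre duality available there, so the coherent-sheaf argument of Lemma \ref{rel ori lemma} simply does not apply; that lemma lives only on the algebraic moduli space $\mathcal{M}_{X}$. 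Conversely, on $\mathcal{M}_{X}^{red}$, where Lemma \ref{rel ori lemma} does hold, you have no control on the torsion of $H^{2}$ (the moduli scheme is arbitrary), so the step "two square roots of the same bundle differ by a $2$-torsion class, which vanishes" is unavailable there. The two ingredients of your proof never coexist on the same space, and bridging them is exactly the content of the paper's proof: after reducing (via the torsion-freeness) to a rational statement tested on embedded surfaces $C$, it proves $2c_{1}\big(\det(Ind(\slashed{\mathbb{D}}_{End\mathcal{E}_{X}}))\big)|_{C}=c_{1}\big(r^{*}\det(Ind(\slashed{\mathbb{D}}_{End\mathcal{E}_{Y}}))\big)|_{C}$ by the Atiyah--Singer family index theorem, the Gysin compatibility $\pi_{X_{!}}\circ i_{!}=\pi_{Y_{!}}$, and the key computation that $\widetilde{Td}(X)-2\,Td(X)=-\hat{A}(X)\,(e^{c_{1}(X)/2}+e^{-c_{1}(X)/2})$ has only even-degree components. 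That index-theoretic computation is the heart of the theorem; assuming the determinant relation upstairs assumes away precisely this, so your proposal is in effect circular at its core. Note also that even the paper's machinery yields the relation only rationally and only at the level of $c_{1}$ --- which is exactly why the statement is a \emph{weak} orientability --- so obtaining an honest line-bundle isomorphism over the connection space "for free" should itself have been a red flag.

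A secondary omission is the descent. Your identity, even if established, lives on the framed space $\widetilde{\mathcal{B}}_{E,X}$, whereas $\mathcal{M}_{X}$ and $\mathcal{M}_{Y}$ embed into the unframed quotients $\mathcal{B}^{*}_{X}$ and $\mathcal{B}^{*}_{Y}$; "restricting along the embeddings" therefore requires first descending both line bundles and the identity through the $PSU(N)$-bundle $\beta$. The paper handles this by formulating everything in terms of $G=PSU(N)$-equivariant Chern classes on $\widetilde{\mathcal{B}}^{*}_{X}\times_{G}EG\simeq\mathcal{B}^{*}_{X}$, and by checking, via the exact sequence $\pi_{1}(G)\rightarrow\pi_{1}(\widetilde{\mathcal{B}}^{*}_{X}\times EG)\rightarrow\pi_{1}(\widetilde{\mathcal{B}}^{*}_{X}\times_{G}EG)\rightarrow0$ with $\pi_{1}(G)\cong\mathbb{Z}_{N}$ torsion and $\pi_{1}(\widetilde{\mathcal{B}}^{*}_{X})$ torsion-free, that the quotient still has torsion-free $H^{2}$. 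Your plan needs this same equivariant bookkeeping inserted between the upstairs identity and the restriction to $\mathcal{M}^{red}_{X}$.
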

\begin{proof}
%From Lemma \ref{def-obs LES}, we get $(\mathcal{L}_{\mathcal{M}_{X}}\otimes r^{*}\mathcal{L}_{\mathcal{M}_{Y}}^{-\frac{1}{2}})^{2}\cong \mathcal{O}_{_{\mathcal{M}_{X}}}$, which defines a non-degenerate quadratic form on $\mathcal{L}_{\mathcal{M}_{X}}\otimes r^{*}\mathcal{L}_{\mathcal{M}_{Y}}^{-\frac{1}{2}}$. As in Theorem \ref{ori on even CY}, we then only need to prove
%\begin{equation}c_{1}(\mathcal{L}_{\mathcal{M}_{X}}\otimes r^{*}\mathcal{L}_{\mathcal{M}_{Y}}^{-\frac{1}{2}})=0. \nonumber \end{equation}
%By the Lefschetz hyperplane theorem, $Tor(H_{*}(Y,\mathbb{Z}))=0$.
Without loss of generality, we assume $\mathcal{M}_{X}$ and $\mathcal{M}_{Y}$ are reduced schemes.
Using a Hermitian metric on $E$, we have embeddings $\mathcal{M}_{X}\subseteq \mathcal{B}^{*}_{X}$, $\mathcal{M}_{Y}\subseteq \mathcal{B}^{*}_{Y}$
into spaces of gauge equivalent classes of irreducible unitary connections on $E$ and $E|_{Y}$.
The restriction map $r: \mathcal{M}_{X}\rightarrow\mathcal{M}_{Y}$ extends to a restriction map
$r: \mathcal{B}^{*}_{X}\rightarrow \mathcal{B}_{Y}$ to the orbit space of connections on $E|_{Y}$.
We consider an open subset $U(\mathcal{M}_{X})\triangleq r^{-1}(\mathcal{B}^{*}_{Y})$ of $\mathcal{B}^{*}_{X}$ which fits into a commutative diagram
\begin{equation}\label{diagram1}
\xymatrix{
  \beta^{-1}(U(\mathcal{M}_{X}))\ar[d]_{\beta} \ar[r]^{\quad \quad r}
                & \widetilde{\mathcal{B}}^{*}_{Y} \ar[d]^{\beta} \\
  U(\mathcal{M}_{X})   \ar[r]^{r}
                & \mathcal{B}^{*}_{Y}, }
\end{equation}
where $\beta$ is a $PSU(N)$-fiber bundle defined by forgetting the framing at a base point $x_{0}\in Y\subseteq X$ (see also (5.1.3) of \cite{dk}).

By Theorem \ref{ori on odd CY}, there exists an algebraic square root $\mathcal{L}_{\mathcal{M}_{Y}}^{\frac{1}{2}}$
coming from a square root $det(Ind(\slashed{\mathbb{D}}_{End\mathcal{E}_{Y}}))^{\frac{1}{2}}\rightarrow \widetilde{\mathcal{B}}_{Y}$.
In fact, its restriction to $\widetilde{\mathcal{B}}^{*}_{Y}\subseteq\widetilde{\mathcal{B}}_{Y}$ descends to a line bundle
over $\mathcal{B}^{*}_{Y}$ via map $\beta$, which
gives $\mathcal{L}_{\mathcal{M}_{Y}}^{\frac{1}{2}}$ on $\mathcal{M}_{Y}\subseteq \mathcal{B}^{*}_{Y}$.
Thus to prove the result for Chern classes of line bundles over $\mathcal{B}^{*}_{X}$,
we are left to prove a result for $G=PSU(N)$-equivariant Chern classes over the space $\widetilde{\mathcal{B}}^{*}_{X}$, i.e.
%As $\mathcal{B}_{Y}\backslash\mathcal{B}^{*}_{Y}\subseteq \mathcal{B}_{Y}$ has large codimension,
%$det(Ind(\slashed{\mathbb{D}}_{End\mathcal{E}_{Y}}))^{\frac{1}{2}}$ uniquely extends to a complex line bundle
%$det(Ind(\slashed{\mathbb{D}}_{End\mathcal{E}_{Y}}))^{\frac{1}{2}}\rightarrow\mathcal{B}_{Y}$.
\begin{equation}\label{ori of amb space}
c_{1}^{G}(det(Ind(\slashed{\mathbb{D}}_{End\mathcal{E}_{X}})))-c_{1}^{G}(r^{*}det(Ind(\slashed{\mathbb{D}}_{End\mathcal{E}_{Y}}))^{\frac{1}{2}})=0\in
H^{2}(\widetilde{\mathcal{B}}^{*}_{X}\times_{G}EG,\mathbb{Z}),  \end{equation}
for the $SU(N)$ complex vector bundle $E\rightarrow X$ with $N\gg0$,
where $r: \widetilde{\mathcal{B}}_{X}^{*}\rightarrow \widetilde{\mathcal{B}}_{Y}$ is the restriction map which extends the one in (\ref{diagram1}),
and $\mathcal{E}_{X}$ (resp. $\mathcal{E}_{Y}$) is the universal family over $\widetilde{\mathcal{B}}_{X}^{*}$ (resp. $\widetilde{\mathcal{B}}_{Y}^{*}$).
The index bundle $Ind(\slashed{\mathbb{D}}_{End\mathcal{E}_{X}})$ is defined by a lifting $c_{1}(X)$ of
$w_{2}(X)$ using the $spin^{c}$ structure of complex manifold $X$.
%satisfies $Ind(\slashed{\mathbb{D}}_{End\mathcal{E}_{X}})|_{A}=H^{0,odd}_{D_{A}}(X,EndE)-H^{0,even}_{D_{A}}(X,EndE)$.

The Federer spectral sequence
\begin{equation}E_{2}^{p,q}\cong H^{p}(X,\pi_{p+q}(BSU(N)))\Rightarrow \pi_{q}(Map_{E}(X,BSU(N))).  \nonumber \end{equation}
gives $\pi_{1}(Map_{E}(X,BSU(N)))\cong \bigoplus_{k\geq1} H^{2k+1}(X,\mathbb{Z})$ for $N\gg0$, which is torsion-free.
Since $G$ acts on $\widetilde{\mathcal{B}}_{X}^{*}\times EG$ freely, we have an exact sequence
\begin{equation}\pi_{1}(G)\rightarrow\pi_{1}(\widetilde{\mathcal{B}}_{X}^{*}\times EG)\rightarrow
\pi_{1}(\widetilde{\mathcal{B}}_{X}^{*}\times_{G}EG)\rightarrow 0. \nonumber \end{equation}
As $\pi_{1}(G)\cong \mathbb{Z}_{n}$, and $\pi_{1}(\widetilde{\mathcal{B}}_{X}^{*}\times EG)\cong
\pi_{1}(\widetilde{\mathcal{B}}_{X})\cong\pi_{1}(Map_{E}(X,BSU(N)))$ is torsion-free,
there is no homomorphism from torsion groups to torsion-free ones,
thus $\pi_{1}(\widetilde{\mathcal{B}}_{X}^{*}\times_{G}EG)\cong\pi_{1}(\widetilde{\mathcal{B}}_{X}^{*})$.
And $H_{1}(\widetilde{\mathcal{B}}_{X}^{*}\times_{G}EG,\mathbb{Z})$, $H^{2}(\widetilde{\mathcal{B}}_{X}^{*}\times_{G}EG,\mathbb{Z})$ are torsion-free.
%\begin{equation}H_{1}(\widetilde{\mathcal{B}}_{X}^{*}\times_{G}EG,\mathbb{Z})\cong \pi_{1}(\widetilde{\mathcal{B}}_{X}^{*}\times_{G}EG)
%\cong\pi_{1}(\widetilde{\mathcal{B}}^{*}_{X})\times\pi_{1}(BG)\cong\pi_{1}(\widetilde{\mathcal{B}}_{X})\cong\pi_{1}(Map_{E}(X,BSU(N)))
%\nonumber \end{equation}

Thus to prove (\ref{ori of amb space}), we only need
\begin{equation}2c_{1}(det(Ind(\slashed{\mathbb{D}}_{End\mathcal{E}_{X}}))\times_{G} EG)-
c_{1}(r^{*}det(Ind(\slashed{\mathbb{D}}_{End\mathcal{E}_{Y}}))\times_{G} EG)=0\in H^{2}(\widetilde{\mathcal{B}}_{X}^{*}\times_{G}EG,\mathbb{Q}).
\nonumber \end{equation}
We are furthermore left to show
\begin{equation}2c_{1}\big(det(Ind(\slashed{\mathbb{D}}_{End\mathcal{E}_{X}}))\times_{G} EG\big)|_{C}-
c_{1}\big(r^{*}det(Ind(\slashed{\mathbb{D}}_{End\mathcal{E}_{Y}}))\times_{G} EG\big)|_{C}=0\in H^{2}(C,\mathbb{Q})\nonumber \end{equation}
for any embedded surface $C\subseteq \widetilde{\mathcal{B}}_{X}^{*}\times_{G}EG$.

%As an element in the $K$-group of $C$, we have
%\begin{equation}\label{equality 2} Ind_{\mathbb{C}}(\mathcal{B}_{X}^{*})|_{C}=r^{*}Ind_{\mathbb{C}}(\mathcal{B}_{Y}^{*})|_{C}+
%Ind_{\mathbb{C}}^{K_{X}}(\mathcal{B}_{X}^{*})|_{C},   \end{equation}
%where $Ind_{\mathbb{C}}^{K_{X}}(\mathcal{B}_{X}^{*})|_{C}$ is the index bundle twisted by connections from $K_{X}$ with
%$Ind_{\mathbb{C}}^{K_{X}}(\mathcal{B}_{X}^{*})|_{D_{A}}=H^{0,odd}_{D_{A}}(X,EndE\otimes K_{X})-H^{0,even}_{D_{A}}(X,EndE\otimes K_{X})$.
Note that the universal bundle $\mathcal{E}_{X}$ is $G$-invariant and extends to a universal bundle $\mathcal{E}_{X}\times_{G}EG$
over $\widetilde{\mathcal{B}}_{X}^{*}\times_{G}EG$.
We denote the universal bundle over $C$ to be $\mathcal{E}\rightarrow X\times C$, $\pi_{X}: X\times C\rightarrow C$, $\pi_{Y}: Y\times C\rightarrow C$ to be projection maps, and $i=(i_{Y},Id): Y\times C\rightarrow X\times C$. The commutative diagram
\begin{equation}\xymatrix{
  Y\times C \ar[rr]^{i} \ar[dr]_{\pi_{Y}}
                &  &    X\times C \ar[dl]^{\pi_{X}}    \\
                & C                 }
\nonumber \end{equation}
implies that $\pi_{X_{!}}\circ i_{!}=\pi_{Y_{!}}$ for Gysin homomorphisms on cohomologies.
By applying the Atiyah-Singer family index theorem \cite{as4}, we have
\begin{eqnarray*}c_{1}\big(det(Ind(\slashed{\mathbb{D}}_{End\mathcal{E}_{X}}))\times_{G} EG\big)|_{C}
&=& c_{1}(det(Ind(\slashed{\mathbb{D}}_{End(\mathcal{E}_{X}\times_{G}EG)}))|_{C}) \\
&=& \pi_{X_{!}}([ch(End \mathcal{E})\cdot Td(X)]^{(2n+1)}) \\
&=& (\sum_{i=1}^{n}ch_{2i}(End \mathcal{E})\cdot Td_{2n-2i+1}(X))/[X],
\nonumber \end{eqnarray*}
%\begin{eqnarray*}\label{equality 3}c_{1}(Ind^{K_{X}}_{\mathbb{C}}(\mathcal{B}_{X}^{*})|_{C})&=&[ch(End \mathcal{E})\cdot p^{*}ch(K_{X})\cdot Td(X)]^{(5)}/[X] \\ &=& [-\frac{1}{24}ch_{2}(End \mathcal{E})\cdot c_{1}(X)c_{2}(X)-\frac{1}{2}ch_{4}(End \mathcal{E})\cdot c_{1}(X)]/[X]
%\\ &=& -c_{1}(Ind_{\mathbb{C}}(\mathcal{B}_{X}^{*})|_{C}).  \nonumber \end{eqnarray*}
\begin{eqnarray*}c_{1}\big(r^{*}det(Ind(\slashed{\mathbb{D}}_{End\mathcal{E}_{Y}}))\times_{G} EG\big)|_{C}
&=& c_{1}(r^{*}det(Ind(\slashed{\mathbb{D}}_{End(\mathcal{E}_{Y}\times_{G} EG)}))|_{C}) \\
&=&\pi_{Y_{!}}([ch(End (i^{*}\mathcal{E}))\cdot Td(Y)]^{(2n)}) \\
&=& \pi_{X_{!}}\circ i_{!}([i^{*}ch(End \mathcal{E})\cdot \frac{Td(X)|_{Y}}{Td(\mathcal{N}_{Y/X})}]^{(2n)}) \\
&=& \pi_{X_{!}}([i_{!}\circ i^{*}(ch(End \mathcal{E})\cdot \frac{Td(X)}{Td(K_{X}^{-1})})]^{(2n+1)}) \\
&=& \pi_{X_{!}}([ch(End \mathcal{E})\cdot \frac{Td(X)}{Td(K_{X}^{-1})}\cdot c_{1}(K_{X}^{-1})]^{(2n+1)}) \\
&=& [ch(End \mathcal{E})\cdot Td(X)\cdot(1-e^{-c_{1}(X)})]^{(2n+1)}/[X].
%&=& 2c_{1}(Ind_{\mathbb{C}}(\mathcal{B}_{X}^{*})|_{C})
\nonumber \end{eqnarray*}
We introduce $\widetilde{Td}(X)=Td(X)\cdot(1-e^{-c_{1}(X)})$. To prove
\begin{equation}2c_{1}\big(det(Ind(\slashed{\mathbb{D}}_{End\mathcal{E}_{X}}))\times_{G} EG\big)|_{C}=
c_{1}\big(r^{*}det(Ind(\slashed{\mathbb{D}}_{End\mathcal{E}_{Y}}))\times_{G} EG\big)|_{C},  \nonumber \end{equation}
we are left to show
\begin{equation}\label{equality 3}\widetilde{Td}_{2i-1}(X)=2 \textrm{ }Td_{2i-1}(X),  \textrm{ } \textrm{for}  \textrm{ } 1\leq i\leq n,  \end{equation}
i.e. $\widetilde{Td}(X)-2 \textrm{ }Td(X)$ consists of even index classes.
Note that the $\hat{A}$-class satisfies
\begin{equation} Td(X)=e^{\frac{c_{1}(X)}{2}}\cdot\hat{A}(X), \nonumber \end{equation}
and
\begin{eqnarray*}\widetilde{Td}(X)-2Td(X)&=&\hat{A}(X)(e^{\frac{c_{1}(X)}{2}}-e^{-\frac{c_{1}(X)}{2}})-2\hat{A}(X)\cdot e^{\frac{c_{1}(X)}{2}} \\
&=& -\hat{A}(X)(e^{\frac{c_{1}(X)}{2}}+e^{-\frac{c_{1}(X)}{2}}),  \nonumber \end{eqnarray*}
which is of even index as both factors in the RHS are so.
\end{proof}

\subsection{Relations with relative orientations for restriction morphisms}
We start with the restriction morphism $r: \mathcal{M}_{X}\rightarrow\mathcal{M}_{Y}$ between two coarse moduli spaces and determinant line bundles
$\mathcal{L}_{\mathcal{M}_{X}}$, $\mathcal{L}_{\mathcal{M}_{Y}}$ over them respectively.
\begin{lemma}\label{rel ori lemma}
There exists a canonical isomorphism
\begin{equation}\alpha:(\mathcal{L}_{\mathcal{M}_{X}})^{\otimes2}\cong r^{*}\mathcal{L}_{\mathcal{M}_{Y}}   \nonumber \end{equation}
between algebraic line bundles.
\end{lemma}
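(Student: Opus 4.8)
The plan is to deduce the isomorphism from the defining exact sequence of the anti-canonical divisor together with relative Serre duality, and then to pass to determinant lines. Since $Y\in|-K_X|$ we have $\mathcal{O}_X(-Y)\cong K_X$, so the ideal sheaf sequence of $i\colon Y\hookrightarrow X$ is
\begin{equation}
0\to K_X\to\mathcal{O}_X\to i_*\mathcal{O}_Y\to 0. \nonumber
\end{equation}
Let $\mathcal{F}$ be the universal family over $X\times\mathcal{M}_X$ (we work \'etale-locally on $\mathcal{M}_X$ where such a family exists, and check at the end that the resulting isomorphism is unchanged under twisting $\mathcal{F}$ by a line bundle pulled back from $\mathcal{M}_X$, so that the local isomorphisms glue). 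As the moduli space parametrises locally free sheaves, $\mathcal{E}nd(\mathcal{F})$ is a vector bundle and $i^*\mathcal{E}nd(\mathcal{F})\cong\mathcal{E}nd(i^*\mathcal{F})$; tensoring the sequence above with $\mathcal{E}nd(\mathcal{F})$ gives
\begin{equation}
0\to\mathcal{E}nd(\mathcal{F})\otimes p_X^*K_X\to\mathcal{E}nd(\mathcal{F})\to i_*\mathcal{E}nd(i^*\mathcal{F})\to 0 \nonumber
\end{equation}
on $X\times\mathcal{M}_X$.

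First I would push this forward along $\pi\colon X\times\mathcal{M}_X\to\mathcal{M}_X$ to obtain a distinguished triangle of perfect complexes on $\mathcal{M}_X$ whose third term is, by base change and compatibility of the universal families under restriction, the pullback $r^*$ of the complex computing $\mathrm{Ext}^\bullet_Y$ on $\mathcal{M}_Y$. Writing $D_X=\det(R\pi_*\mathcal{E}nd(\mathcal{F}))$ and letting $D_Y$ be the analogous determinant line on $\mathcal{M}_Y$, and using that the determinant line is multiplicative along distinguished triangles, I obtain
\begin{equation}
D_X\cong\det\big(R\pi_*(\mathcal{E}nd(\mathcal{F})\otimes p_X^*K_X)\big)\otimes r^*D_Y. \nonumber
\end{equation}
The decisive point is to identify the first factor on the right with $D_X^{-1}$. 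This is relative Grothendieck--Serre duality for $\pi$, whose fibres have even dimension $2n$ and relative dualizing sheaf $p_X^*K_X$, combined with the canonical self-duality $\mathcal{E}nd(\mathcal{F})^\vee\cong\mathcal{E}nd(\mathcal{F})$ given by the trace pairing; together they yield a canonical isomorphism $R\pi_*(\mathcal{E}nd(\mathcal{F})\otimes p_X^*K_X)\cong(R\pi_*\mathcal{E}nd(\mathcal{F}))^\vee[-2n]$. Dualizing inverts the determinant line and the shift by the \emph{even} integer $2n$ leaves it unchanged, so the first factor is $D_X^{-1}$. Substituting gives $D_X\cong D_X^{-1}\otimes r^*D_Y$, i.e. $D_X^{\otimes2}\cong r^*D_Y$; since $\mathcal{L}_{\mathcal{M}_X}=D_X^{-1}$ and $\mathcal{L}_{\mathcal{M}_Y}=D_Y^{-1}$ in the convention $\det(\mathrm{Ext}^{\mathrm{odd}})\otimes\det(\mathrm{Ext}^{\mathrm{even}})^{-1}$, inverting yields the asserted $\alpha\colon(\mathcal{L}_{\mathcal{M}_X})^{\otimes2}\cong r^*\mathcal{L}_{\mathcal{M}_Y}$.

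I expect the main obstacle to be canonicity rather than computation. One must make relative duality genuinely canonical and compatible with the trace self-duality of $\mathcal{E}nd(\mathcal{F})$, so that $\alpha$ carries no residual sign or scalar; the evenness of $2n$ is precisely what kills the potential sign coming from the shift and is therefore essential. The second delicate point is that $\mathcal{M}_X$ is only a coarse moduli space and may lack a global universal family, so I would either argue with the determinant line bundle as it is intrinsically defined (descending from the framed configuration space $\widetilde{\mathcal{B}}$ used throughout the paper) or verify twist-invariance of the construction so that the \'etale-local isomorphisms glue. Once these points are settled, the remaining cohomological bookkeeping -- the reindexing of the long exact $\mathrm{Ext}$-sequence via Serre duality and the verification that the parities match -- is routine.
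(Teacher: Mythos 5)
Your proposal is correct and follows essentially the same route as the paper's proof: restrict the sequence $0\to p^{*}K_{X}\to\mathcal{O}_{X\times\mathcal{M}_{X}}\to\mathcal{O}_{Y\times\mathcal{M}_{X}}\to0$ tensored with $R\mathcal{H}om(\mathcal{E},\mathcal{E})$, push forward to get a distinguished triangle, use multiplicativity of determinants, and identify the first factor as the dual via relative Grothendieck--Serre duality with the even shift $[2n]$ leaving the determinant unchanged. Your additional remarks on the trace self-duality, the sign from the even shift, and the \'etale-local treatment of the coarse moduli issue are worthwhile refinements of points the paper leaves implicit, but they do not constitute a different argument.
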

\begin{proof}
We consider a commutative diagram
\begin{equation}\xymatrix{
  Y\times \mathcal{M}_{X} \ar[rr]^{i} \ar[dr]_{\pi_{Y}}
                &  &    X\times \mathcal{M}_{X} \ar[dl]^{\pi_{X}}    \\
                & \mathcal{M}_{X}. }
\nonumber \end{equation}
By definition, $r^{*}\mathcal{L}_{\mathcal{M}_{Y}}=det((R\pi_{Y})_{*}(R\mathcal{H}om(i^{*}\mathcal{E},i^{*}\mathcal{E})))$,
where $\mathcal{E}\rightarrow X\times \mathcal{M}_{X}$ is the universal bundle of $\mathcal{M}_{X}$.
By the adjunction formula (see for instance \cite{huy1}), we have
\begin{eqnarray*}(R\pi_{Y})_{*}(R\mathcal{H}om(i^{*}\mathcal{E},i^{*}\mathcal{E}))&=&(R\pi_{X})_{*}\circ i_{*}(i^{*}R\mathcal{H}om(\mathcal{E},\mathcal{E}))   \\
&=& (R\pi_{X})_{*}(R\mathcal{H}om(\mathcal{E},\mathcal{E})\otimes^{L}\mathcal{O}_{Y\times \mathcal{M}_{X}}).
\nonumber \end{eqnarray*}
From the short exact sequence $0\rightarrow p^{*}K_{X}\rightarrow\mathcal{O}_{X\times \mathcal{M}_{X}}
\rightarrow\mathcal{O}_{Y\times \mathcal{M}_{X}}\rightarrow0$, where $p: X\times \mathcal{M}_{X}\rightarrow X$ is the projection,
we obtain exact triangles
\begin{equation}R\mathcal{H}om(\mathcal{E},\mathcal{E})\otimes p^{*}K_{X}\rightarrow R\mathcal{H}om(\mathcal{E},\mathcal{E})\rightarrow
R\mathcal{H}om(\mathcal{E},\mathcal{E})\otimes^{L}\mathcal{O}_{Y\times \mathcal{M}_{X}}, \nonumber \end{equation}
\begin{equation}\label{equ 3}(R\pi_{X})_{*}(R\mathcal{H}om(\mathcal{E},\mathcal{E})\otimes p^{*}K_{X})\rightarrow (R\pi_{X})_{*}
R\mathcal{H}om(\mathcal{E},\mathcal{E})\rightarrow (R\pi_{X})_{*}(R\mathcal{H}om(\mathcal{E},\mathcal{E})\otimes^{L}\mathcal{O}_{Y\times \mathcal{M}_{X}}).
\end{equation}
By the Grothendieck-Serre duality \cite{conrad}, we have
\begin{equation}\label{equ 4}(R\pi_{X})_{*}(R\mathcal{H}om(\mathcal{E},\mathcal{E})\otimes p^{*}K_{X}[2n])\simeq \big((R\pi_{X})_{*}
R\mathcal{H}om(\mathcal{E},\mathcal{E})\big)^{\vee}.  \end{equation}
By taking the corresponding element of (\ref{equ 3}) in the Grothendieck group (see for instance page 124 of \cite{huy1}),
and using (\ref{equ 4}), we can obtain an isomorphism of determinant line bundles
\begin{equation}(\mathcal{L}_{\mathcal{M}_{X}})^{\otimes2}\cong r^{*}\mathcal{L}_{\mathcal{M}_{Y}}.\nonumber \end{equation}
\end{proof}
\begin{remark} ${}$ \\
1. When $dim_{\mathbb{C}}X=2$ and $Y$ is an elliptic curve, $\mathcal{M}_{X}$ is smooth by Serre duality. The above isomorphism gives
$(K_{\mathcal{M}_{X}})^{\otimes2}\cong \mathcal{O}_{\mathcal{M}_{X}}$. This fits into the work of Bottacin \cite{bottacin} on the existence
of Poisson structures on moduli spaces of simple sheaves on Poisson surfaces. By Proposition 6.2 \cite{bottacin}, the Poisson structure
on $\mathcal{M}_{X}$ is non-degenerate and defines a holomorphic symplectic structure if $r$ is a well-defined map to a coarse moduli of simple bundles
on $Y$.  \\
2. Similar isomorphism holds true for Lagrangians in $(3-2n)$-shifted symplectic derived schemes
in the sense of Pantev, T\"{o}en, Vaqui\'{e} and Vezzosi \cite{ptvv}.
\end{remark}
Then it is natural to make the following definition for orientations in this relative set-up (compare to Definition 2.11 of Borisov and Joyce \cite{bj}).
\begin{definition}\label{def on rel ori}
Let $X$ be a smooth projective $2n$-fold with a smooth anti-canonical divisor $Y\in|K^{-1}_{X}|$, and $r: \mathcal{M}_{X}\rightarrow\mathcal{M}_{Y}$
%\begin{equation}r: \mathcal{M}_{X}\rightarrow\mathcal{M}_{Y}  \nonumber \end{equation}
be a well-defined restriction morphism between coarse moduli spaces of simple sheaves on $X$ and $Y$ with fixed Chern classes respectively.

A \emph{relative orientation} for morphism $r$ consists of a square root $(\mathcal{L}_{\mathcal{M}_{Y}}|_{\mathcal{M}^{red}_{Y}})^{\frac{1}{2}}$
of the determinant line bundle $\mathcal{L}_{\mathcal{M}_{Y}}|_{\mathcal{M}^{red}_{Y}}$ and an isomorphism
\begin{equation}\theta:\mathcal{L}_{\mathcal{M}_{X}}|_{\mathcal{M}^{red}_{X}} \cong
r^{*}(\mathcal{L}_{\mathcal{M}_{Y}}|_{\mathcal{M}^{red}_{Y}})^{\frac{1}{2}}  \nonumber \end{equation}
such that $\theta\otimes\theta\cong \alpha$ holds over $\mathcal{M}^{red}_{X}$ for the isomorphism $\alpha$ in Lemma \ref{rel ori lemma}.
\end{definition}
\begin{remark} ${}$ \\
1. When $X$ is a $CY_{2n}$, $Y=\emptyset$, the above definition coincides with Definition 2.11 of \cite{bj} on orientations of even-shifted symplectic
derived schemes, because a real line bundle is trivial on a scheme if and only if it is trivial over its reduced scheme
(see also Theorem \ref{ori on even CY}). \\
2. When $dim_{\mathbb{C}}X=4$, relative orientations are required to defined relative $DT_{4}$ invariants \cite{caoleung2}.
\end{remark}
The previous Theorem \ref{ori on rel} gives an evidence for the existence of relative orientations. Another partial result is given as follows.
\begin{proposition}\label{prop on rel ori}
We assume $H^{1}(\mathcal{M}_{X},\mathbb{Z}_{2})=0$. Then relative orientations
for restriction morphism $r: \mathcal{M}_{X}\rightarrow\mathcal{M}_{Y}$ exist.
\end{proposition}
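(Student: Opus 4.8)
The plan is to split the data required by Definition \ref{def on rel ori} into the square root of $\mathcal{L}_{\mathcal{M}_Y}|_{\mathcal{M}^{red}_Y}$, which earlier results already supply, and the compatible isomorphism $\theta$, whose existence I will reduce to the vanishing of a single obstruction class in $H^{1}(\mathcal{M}_X,\mathbb{Z}_2)$. First I would fix, once and for all, an algebraic square root $(\mathcal{L}_{\mathcal{M}_Y}|_{\mathcal{M}^{red}_Y})^{1/2}$: since $Y$ is a Calabi-Yau $(2n-1)$-fold with $\mathcal{M}_Y$ proper, such a square root exists by Theorem \ref{ori on odd CY}, and the weak relative orientability of Theorem \ref{ori on rel} even provides one satisfying $c_{1}(\mathcal{L}_{\mathcal{M}_X}|_{\mathcal{M}^{red}_X})=r^{*}c_{1}((\mathcal{L}_{\mathcal{M}_Y}|_{\mathcal{M}^{red}_Y})^{1/2})$. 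It then remains only to produce $\theta$.

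Next I would form, on $\mathcal{M}^{red}_X$, the line bundle
\begin{equation}
M:=\mathcal{H}om\big(\mathcal{L}_{\mathcal{M}_X}|_{\mathcal{M}^{red}_X},\,r^{*}(\mathcal{L}_{\mathcal{M}_Y}|_{\mathcal{M}^{red}_Y})^{1/2}\big).
\nonumber
\end{equation}
Because $\big(r^{*}(\mathcal{L}_{\mathcal{M}_Y}|_{\mathcal{M}^{red}_Y})^{1/2}\big)^{\otimes2}\cong r^{*}\mathcal{L}_{\mathcal{M}_Y}|_{\mathcal{M}^{red}_Y}$, the square $M^{\otimes2}$ is canonically $\mathcal{H}om\big((\mathcal{L}_{\mathcal{M}_X}|_{\mathcal{M}^{red}_X})^{\otimes2},\,r^{*}\mathcal{L}_{\mathcal{M}_Y}|_{\mathcal{M}^{red}_Y}\big)$, and the isomorphism $\alpha$ of Lemma \ref{rel ori lemma}, restricted to $\mathcal{M}^{red}_X$, is precisely a nowhere-vanishing section of $M^{\otimes2}$, hence a distinguished trivialization $\tau\colon M^{\otimes2}\xrightarrow{\sim}\mathcal{O}_{\mathcal{M}^{red}_X}$. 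The point of this reformulation is that giving $\theta\colon\mathcal{L}_{\mathcal{M}_X}|_{\mathcal{M}^{red}_X}\xrightarrow{\sim}r^{*}(\mathcal{L}_{\mathcal{M}_Y}|_{\mathcal{M}^{red}_Y})^{1/2}$ with $\theta\otimes\theta\cong\alpha$ is exactly the same as giving a nowhere-vanishing section $s$ of $M$ with $s^{\otimes2}=\tau^{-1}(1)$; that is, the relative orientation exists precisely when the square root $(M,\tau)$ of $\mathcal{O}_{\mathcal{M}^{red}_X}$ is the trivial one.

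Then I would extract the obstruction to trivializing $(M,\tau)$, exactly along the lines of the proof of Theorem \ref{ori on odd CY}. Using the Kummer sequence $1\to\mu_{2}\to\mathbb{G}_{m}\xrightarrow{2}\mathbb{G}_{m}\to1$ (equivalently, in the analytic topology, $1\to\mathbb{Z}_{2}\to\mathcal{O}^{*}\xrightarrow{f\mapsto f^{2}}\mathcal{O}^{*}\to1$), one chooses over a fine cover local square roots $s_{i}$ of $\tau^{-1}(1)$ and records the transition signs $\epsilon_{ij}\in\{\pm1\}$; these form a cocycle whose class $o(M,\tau)\in H^{1}(\mathcal{M}^{red}_X,\mathbb{Z}_{2})$ vanishes if and only if a global $s$, hence $\theta$, exists. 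Since a scheme and its reduction share the same underlying space, $H^{1}(\mathcal{M}^{red}_X,\mathbb{Z}_{2})\cong H^{1}(\mathcal{M}_X,\mathbb{Z}_{2})$, which is $0$ by hypothesis; thus $o(M,\tau)=0$, the desired $\theta$ exists, and $\theta\otimes\theta\cong\alpha$, completing the relative orientation.

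The main subtlety I expect is the compatibility and algebraicity demanded by Definition \ref{def on rel ori}: one needs not merely some isomorphism between $\mathcal{L}_{\mathcal{M}_X}|_{\mathcal{M}^{red}_X}$ and $r^{*}(\mathcal{L}_{\mathcal{M}_Y}|_{\mathcal{M}^{red}_Y})^{1/2}$, but one whose square is the fixed $\alpha$ on the nose, realized by an algebraic isomorphism over $\mathcal{M}^{red}_X$. The real work is to check that the obstruction to this refined statement is packaged into the single class $o(M,\tau)$, that this class genuinely lives in $H^{1}(\mathcal{M}_X,\mathbb{Z}_{2})$ rather than in some finer invariant, and that, via the comparison $H^{1}_{et}(\mathcal{M}^{red}_X,\mu_{2})\cong H^{1}(\mathcal{M}_X,\mathbb{Z}_{2})$, its vanishing furnishes an \emph{algebraic} $\theta$ without any properness assumption on $\mathcal{M}_X$. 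Once these identifications are in place, the hypothesis $H^{1}(\mathcal{M}_X,\mathbb{Z}_{2})=0$ closes the argument at once.
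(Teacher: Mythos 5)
Your proposal is correct and follows essentially the same route as the paper: fix a square root of $\mathcal{L}_{\mathcal{M}_{Y}}|_{\mathcal{M}^{red}_{Y}}$, observe via Lemma \ref{rel ori lemma} that the ``difference'' line bundle on $\mathcal{M}^{red}_{X}$ is a square root of $\mathcal{O}_{\mathcal{M}_{X}^{red}}$, and kill the resulting obstruction in $H^{1}(\mathcal{M}_{X},\mathbb{Z}_{2})$ using the sequence $1\to\mathbb{Z}_{2}\to\mathcal{O}^{*}\xrightarrow{f\mapsto f^{2}}\mathcal{O}^{*}\to1$. Your version is in fact slightly more careful than the paper's in tracking the pair $(M,\tau)$ so that $\theta\otimes\theta$ agrees with the fixed $\alpha$ (rather than just producing some trivialization of $M$), and in flagging the \'etale/analytic topology needed for exactness of the Kummer sequence, but these are refinements of the same argument rather than a different one.
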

\begin{proof}
We denote the uniquely determined restriction morphism between reduced schemes also by $r:\mathcal{M}^{red}_{X}\rightarrow\mathcal{M}^{red}_{Y}$.
Combining with Lemma \ref{rel ori lemma}, there exists a canonical isomorphism
\begin{equation}\alpha:(\mathcal{L}_{\mathcal{M}_{X}}|_{\mathcal{M}^{red}_{X}})^{\otimes2}\cong
r^{*}(\mathcal{L}_{\mathcal{M}_{Y}}|_{\mathcal{M}^{red}_{Y}}).   \nonumber \end{equation}
We take any square root $(\mathcal{L}_{\mathcal{M}_{Y}}|_{\mathcal{M}^{red}_{Y}})^{\frac{1}{2}}$
of $\mathcal{L}_{\mathcal{M}_{Y}}|_{\mathcal{M}^{red}_{Y}}$ (existence is due to \cite{no}).
Thus $\alpha$ determines a trivialization
\begin{equation}\alpha:\big((\mathcal{L}_{\mathcal{M}_{X}}|_{\mathcal{M}^{red}_{X}})\otimes
r^{*}(\mathcal{L}_{\mathcal{M}_{Y}}|_{\mathcal{M}^{red}_{Y}})^{-\frac{1}{2}}\big)^{\otimes2}\cong\mathcal{O}_{\mathcal{M}_{X}^{red}}. \nonumber \end{equation}
From the short exact sequence
\begin{equation} \xymatrix@1{1\rightarrow\mathbb{Z}_{2}\rightarrow\mathcal{O}^{*}_{\mathcal{M}_{X}^{red}}\ar[r]^{\quad f\mapsto f^{2}} &
\mathcal{O}^{*}_{\mathcal{M}_{X}^{red}}\rightarrow 1 }, \nonumber \end{equation}
we obtain an exact sequence
\begin{equation}\xymatrix@1{0\rightarrow H^{0}(\mathcal{M}_{X}^{red},\mathbb{Z}_{2})\rightarrow H^{0}(\mathcal{M}_{X}^{red},\mathcal{O}^{*}_{\mathcal{M}_{X}^{red}})
\ar[r]^{\quad \quad\quad \quad f\mapsto f^{2}} & H^{0}(\mathcal{M}_{X}^{red},\mathcal{O}^{*}_{\mathcal{M}_{X}^{red}})\rightarrow  } \nonumber \end{equation}
\begin{equation}\xymatrix@1{\rightarrow H^{1}(\mathcal{M}_{X}^{red},\mathbb{Z}_{2})\ar[r]^{i} &
H^{1}(\mathcal{M}_{X}^{red},\mathcal{O}^{*}_{\mathcal{M}_{X}^{red}}) \ar[r]^{L\mapsto L^{2}\quad } &
H^{1}(\mathcal{M}_{X}^{red},\mathcal{O}^{*}_{\mathcal{M}_{X}^{red}})\rightarrow\cdot\cdot\cdot }.   \nonumber \end{equation}
$H^{1}(\mathcal{M}_{X},\mathbb{Z}_{2})=0$ implies that any square root of $\mathcal{O}_{\mathcal{M}_{X}^{red}}$ is $\mathcal{O}_{\mathcal{M}_{X}^{red}}$.
\end{proof}

\section{Appendix}

\subsection{Some basic facts in spin geometry}
In this section, we recall some basic facts in spin geometry.
The main references are the book \cite{lawson} of Lawson and Michelson and the book \cite{adams} by Adams.
%We first come to the fundamental representation of the spin group $Spin(n)$.
\begin{theorem}\label{rep of spin gp}(Theorem 4.6 \cite{adams}) ${}$ \\
Let $\Delta_{\mathbb{C}}^{\pm}$ be two fundamental complex spinor representations of $Spin(n)$, where $n$ is even. \\
Then, we have \\
(1) $\Delta_{\mathbb{C}}^{\pm}$ are real if $n=8k$; \\
(2) $\Delta_{\mathbb{C}}^{\pm}$ are symplectic if $n=8k+4$; \\
(3) $\Delta_{\mathbb{C}}^{+}\cong (\Delta_{\mathbb{C}}^{-})^{*}$ if $n=8k+2$ or $8k+6$.
\end{theorem}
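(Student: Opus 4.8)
The plan is to reduce the entire statement to the classification of the real Clifford algebras together with the Frobenius--Schur dictionary relating the commutant of an irreducible module to the reality type of its complexification. The key structural input is that $Spin(n)$ sits inside the even part $Cl_n^0$ of the real Clifford algebra, so the reality properties of the half-spin representations are controlled by $Cl_n^0$, and there is the standard isomorphism $Cl_n^0 \cong Cl_{n-1}$. Thus the first step is simply to record the $\bmod\,8$ periodicity table of the real algebras $Cl_{n-1}$ and isolate the four residues $n \equiv 0,2,4,6$.

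Second I would fix the complex spinor module. For $n$ even one has $\mathbb{C}l_n \cong \mathrm{End}_{\mathbb{C}}(\Delta_{\mathbb{C}})$ with $\dim_{\mathbb{C}}\Delta_{\mathbb{C}} = 2^{n/2}$, and the complex volume element $\omega_{\mathbb{C}} = i^{n/2}\,e_1\cdots e_n$ satisfies $\omega_{\mathbb{C}}^2 = 1$; its $\pm 1$-eigenspaces give the decomposition $\Delta_{\mathbb{C}} = \Delta_{\mathbb{C}}^+ \oplus \Delta_{\mathbb{C}}^-$ into the two inequivalent irreducible modules of $\mathbb{C}l_n^0 \cong \mathbb{C}l_{n-1}$ (where $\mathbb{C}l_{n-1}$ has two simple factors because $n-1$ is odd). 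These are the two half-spin representations of $Spin(n)$.

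The core step is then to read the reality type off the real algebra $Cl_{n-1}$ residue by residue. For $n \equiv 0$, $Cl_{n-1}=Cl_{8k-1}$ has the type of $Cl_7 \cong M_N(\mathbb{R}) \oplus M_N(\mathbb{R})$: two \emph{real} simple factors, each producing one of the $\Delta_{\mathbb{C}}^{\pm}$ as the complexification of a real irreducible module, whence each $\Delta_{\mathbb{C}}^{\pm}$ is orthogonal, giving (1). For $n \equiv 4$, $Cl_{8k+3}$ has the type of $Cl_3 \cong M_N(\mathbb{H}) \oplus M_N(\mathbb{H})$: two \emph{quaternionic} factors, so each $\Delta_{\mathbb{C}}^{\pm}$ carries an invariant skew form and is symplectic, giving (2). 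For $n \equiv 2$ and $n \equiv 6$, $Cl_{8k+1}$ and $Cl_{8k+5}$ are of \emph{complex} type ($Cl_1 \cong \mathbb{C}$, $Cl_5 \cong M_N(\mathbb{C})$): there is a single simple real factor with commutant $\mathbb{C}$, whose complexification is $\Delta_{\mathbb{C}}^+ \oplus \Delta_{\mathbb{C}}^-$ with the two summands interchanged by the conjugation real structure, so that $\overline{\Delta_{\mathbb{C}}^+} \cong \Delta_{\mathbb{C}}^-$; since the spinor representations are unitary, $\overline{\Delta_{\mathbb{C}}^+} \cong (\Delta_{\mathbb{C}}^+)^*$, and therefore $(\Delta_{\mathbb{C}}^+)^* \cong \Delta_{\mathbb{C}}^-$, which is (3).

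I expect the main obstacle to be bookkeeping rather than anything conceptual: one must pin down sign conventions (the signature used to define $Cl_n$, hence exactly which $Cl_{n-1}$ occurs and how the periodicity reads) and, more delicately, verify that the two-summand real and quaternionic algebras distribute \emph{one} half-spin module to each factor, so that each $\Delta_{\mathbb{C}}^{\pm}$ is individually self-dual of the stated type, whereas the complex-type algebra has a single factor forcing $\Delta_{\mathbb{C}}^+$ and $\Delta_{\mathbb{C}}^-$ to be a conjugate, hence dual, pair rather than self-dual. A clean alternative that sidesteps the algebra-type dictionary is to build the charge-conjugation operator $C = (\text{a fixed product of the } e_i)\circ(\text{complex conjugation})$ on $\Delta_{\mathbb{C}}$ and to compute the two signs in $C^2 = \pm 1$ and $C\omega_{\mathbb{C}} = \pm\,\omega_{\mathbb{C}} C$ as functions of $n \bmod 8$; the first sign separates real ($+$) from symplectic ($-$), while the second records whether $C$ preserves or exchanges $\Delta_{\mathbb{C}}^{\pm}$, reproducing the same three cases.
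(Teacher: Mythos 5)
Your argument is correct, and there is nothing in the paper to contrast it with: the paper does not prove this statement but simply quotes it as Theorem 4.6 of \cite{adams}. Your route --- the mod-$8$ classification of real Clifford algebras combined with $Cl_n^0 \cong Cl_{n-1}$ and the Frobenius--Schur dictionary (real/quaternionic/complex commutant versus orthogonal/symplectic/dual-pair half-spin representations) --- is precisely the standard proof found in the cited sources \cite{adams}, \cite{lawson}.
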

\begin{remark}
In case (1), $\Delta_{\mathbb{C}}^{\pm}$ are endowed with $Spin(n)$-invariant non-degenerate quadratic forms
as they are complexfications of real representations. \\
In case (2), $\Delta_{\mathbb{C}}^{\pm}$ are endowed with $Spin(n)$-invariant non-degenerate $2$-forms.
\end{remark}
Let $(X,g,P_{Spin}(TX))$ be an even dimensional spin manifold. We define its complex spinor bundles by
\begin{equation}\slashed{S}^{\pm}_{\mathbb{C}}(X)=P_{Spin}(TX)\times_{Spin(n)}\Delta_{\mathbb{C}}^{\pm}.   \nonumber \end{equation}
As $\slashed{S}^{\pm}_{\mathbb{C}}(X)$ are Clifford bundles, and the Levi-Civita connection on $(X,g)$ induces connections $\nabla$'s on
complex vector bundles $\slashed{S}^{\pm}_{\mathbb{C}}(X)$, there exists Dirac operators
\begin{equation}\slashed{D}^{\pm}: \Gamma(\slashed{S}^{\pm}_{\mathbb{C}}(X))\rightarrow \Gamma(\slashed{S}^{\mp}_{\mathbb{C}}(X)). \nonumber \end{equation}
If $dim_{\mathbb{R}}(X)=8k$, by Theorem \ref{rep of spin gp}, $\slashed{S}^{\pm}_{\mathbb{C}}(X)$ are the complexificaiton of
real spinor bundles $\slashed{S}^{\pm}(X)$. The corresponding Dirac operators (with their kernel and cokernel) are the complexifications of the real ones.

If $dim_{\mathbb{R}}(X)=8k+2$ or $8k+6$, by Theorem \ref{rep of spin gp}, two complex spinor bundles  $\slashed{S}^{\pm}_{\mathbb{C}}(X)$ are dual to each other as bundles with left $Cl(X)$-action, i.e. there is an isomorphism
\begin{equation}\label{equ 2}\slashed{S}^{+}_{\mathbb{C}}(X)\cong (\slashed{S}^{-}_{\mathbb{C}}(X))^{*}, \end{equation}
which is equivariant under the action of Clifford bundle $Cl(X)$.

Let $(E,h)\rightarrow X$ be a Hermitian complex vector bundle, and $A$ be an unitary connection.
We can define twisted Dirac operators
\begin{equation}\slashed{D}^{+}_{A}: \Gamma(\slashed{S}^{+}_{\mathbb{C}}(X)\otimes E)\rightarrow \Gamma(\slashed{S}^{-}_{\mathbb{C}}(X)\otimes E),
\nonumber \end{equation}
\begin{equation}\slashed{D}^{-}_{A^{*}}: \Gamma(\slashed{S}^{-}_{\mathbb{C}}(X)\otimes E^{*})\rightarrow
\Gamma(\slashed{S}^{+}_{\mathbb{C}}(X)\otimes E^{*}). \nonumber \end{equation}
As $\slashed{S}^{+}_{\mathbb{C}}(X)$ and $\slashed{S}^{+}_{\mathbb{C}}(X)$ are dual as Clifford bundles, when they are coupled with
dual bundles $E$ and $E^{*}$, kernels of the corresponding twisted Dirac operators are dual to each other, i.e.
\begin{theorem}\label{thm 8k+2}
Let $X$ be a compact spin manifold with $dim_{\mathbb{R}}(X)=8k+2$ or $8k+6$, and $E\rightarrow X$ be a
(Hermitian) complex vector bundle with an unitary connection $A$. Then there is a canonical isomorphism
\begin{equation}ker(\slashed{D}^{+}_{A})\cong(ker(\slashed{D}^{-}_{A^{*}}))^{*}  \nonumber \end{equation}
of complex vector spaces.
\end{theorem}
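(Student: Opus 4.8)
The plan is to construct an explicit, canonical $\mathbb{C}$-bilinear pairing between $\ker(\slashed{D}^{+}_{A})$ and $\ker(\slashed{D}^{-}_{A^{*}})$ and to prove it is perfect. The two ingredients are the Clifford-equivariant isomorphism $\slashed{S}^{+}_{\mathbb{C}}(X)\cong(\slashed{S}^{-}_{\mathbb{C}}(X))^{*}$ of (\ref{equ 2}), which supplies a fibrewise non-degenerate bilinear form $B$ pairing $\slashed{S}^{+}_{\mathbb{C}}(X)$ with $\slashed{S}^{-}_{\mathbb{C}}(X)$, and the tautological duality $E\otimes E^{*}\to\mathbb{C}$. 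Tensoring these yields fibrewise perfect pairings between $\slashed{S}^{+}_{\mathbb{C}}(X)\otimes E$ and $\slashed{S}^{-}_{\mathbb{C}}(X)\otimes E^{*}$ (and, symmetrically, between $\slashed{S}^{-}_{\mathbb{C}}(X)\otimes E$ and $\slashed{S}^{+}_{\mathbb{C}}(X)\otimes E^{*}$); integrating against the Riemannian volume form gives a $\mathbb{C}$-bilinear pairing
\begin{equation}\langle s,t\rangle=\int_{X}B(s,t)\,\mathrm{dvol},\qquad s\in\Gamma(\slashed{S}^{+}_{\mathbb{C}}(X)\otimes E),\ t\in\Gamma(\slashed{S}^{-}_{\mathbb{C}}(X)\otimes E^{*}),\nonumber\end{equation}
which is non-degenerate in each slot because $B$ and the $E$–$E^{*}$ pairing are fibrewise perfect and compactly supported test sections separate points.

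The key computation is an integration-by-parts identity exhibiting $\slashed{D}^{+}_{A^{*}}$ as the formal transpose of $\slashed{D}^{+}_{A}$ with respect to this pairing. Writing $\slashed{D}^{+}_{A}=\sum_{i}c(e^{i})\nabla^{+}_{A,e_{i}}$ in a local orthonormal coframe, I would move the Clifford factor across $B$ using the $Cl(X)$-equivariance of (\ref{equ 2}) and then integrate by parts, using that $B$, the Clifford multiplication and the $E$–$E^{*}$ pairing are all parallel and that the induced connection on $E^{*}$ is $A^{*}$; the frame-derivative terms cancel in the standard way, leaving
\begin{equation}\int_{X}B(\slashed{D}^{+}_{A}s,u)\,\mathrm{dvol}=\pm\int_{X}B(s,\slashed{D}^{+}_{A^{*}}u)\,\mathrm{dvol},\qquad u\in\Gamma(\slashed{S}^{+}_{\mathbb{C}}(X)\otimes E^{*}).\nonumber\end{equation}
In particular, every $s\in\ker(\slashed{D}^{+}_{A})$ annihilates the image of $\slashed{D}^{+}_{A^{*}}$ under $\langle\,\cdot\,,\,\cdot\,\rangle$.

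To finish I would invoke Hodge theory on the compact manifold $X$: since $\slashed{D}^{+}_{A^{*}}$ is elliptic with finite-dimensional kernel and cokernel and $\slashed{D}^{-}_{A^{*}}=(\slashed{D}^{+}_{A^{*}})^{*}$ is its $L^{2}$-adjoint, one has the orthogonal decomposition $\Gamma(\slashed{S}^{-}_{\mathbb{C}}(X)\otimes E^{*})=\ker(\slashed{D}^{-}_{A^{*}})\oplus\mathrm{im}(\slashed{D}^{+}_{A^{*}})$. Combined with the previous paragraph, an element $s\in\ker(\slashed{D}^{+}_{A})$ pairing trivially with all of $\ker(\slashed{D}^{-}_{A^{*}})$ then pairs trivially with the whole of $\Gamma(\slashed{S}^{-}_{\mathbb{C}}(X)\otimes E^{*})$, forcing $s=0$ by non-degeneracy; the symmetric argument handles the other slot. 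Hence $\langle\,\cdot\,,\,\cdot\,\rangle$ restricts to a perfect pairing of the two finite-dimensional kernels, giving the desired canonical isomorphism $\ker(\slashed{D}^{+}_{A})\cong(\ker(\slashed{D}^{-}_{A^{*}}))^{*}$. I expect the main obstacle to be the bookkeeping in the transpose identity—pinning down the sign $\epsilon$ in the equivariance $B(c(v)\cdot,\cdot)=\epsilon\,B(\cdot,c(v)\cdot)$ and checking that the induced connection on $E^{*}$ is exactly $A^{*}$, so that the right-hand operator is genuinely $\slashed{D}^{+}_{A^{*}}$; once the transpose relation is in place, the Hodge-theoretic conclusion is routine.
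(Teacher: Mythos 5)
Your proof is correct, and it follows a genuinely different organization from the paper's. The paper does not form a global pairing: it proves the theorem by a chain of operator intertwinings, first transporting $\slashed{D}^{-}_{A^{*}}$ through the Clifford-module duality (\ref{equ 2}) to the dual operator $(\slashed{D}^{+})^{*}_{A^{*}}$ acting on $\Gamma((\slashed{S}^{+}_{\mathbb{C}}(X))^{*}\otimes E^{*})$, then using the Hermitian metrics on the spinor bundle and on $E$ together with the unitarity of $A$ to identify this dual operator with the conjugate operator $\overline{\slashed{D}}^{+}_{A}$ on $\Gamma(\overline{\slashed{S}}^{+}_{\mathbb{C}}(X)\otimes\overline{E})$ (this step contains the same ``move the Clifford factor across the pairing and kill the divergence term by compactness'' computation that you sketch), and finally composing $\ker(\slashed{D}^{-}_{A^{*}})\cong\ker(\overline{\slashed{D}}^{+}_{A})\cong\overline{\ker(\slashed{D}^{+}_{A})}\cong(\ker(\slashed{D}^{+}_{A}))^{*}$, the last isomorphism coming from the $L^{2}$ inner product on the finite-dimensional kernel. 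You instead run the standard Serre-duality pattern: one canonical complex-bilinear pairing, a formal-transpose identity, and the elliptic splitting $\Gamma(\slashed{S}^{-}_{\mathbb{C}}(X)\otimes E^{*})=\ker(\slashed{D}^{-}_{A^{*}})\oplus\mathrm{im}(\slashed{D}^{+}_{A^{*}})$, which the paper never invokes. What your route buys: the isomorphism is induced by a single explicit pairing, so canonicity is manifest, no conjugate bundles or anti-linear identifications appear, the Hermitian metric on $E$ enters only through the Hodge-theoretic input, and the pairing formulation passes immediately to the index-bundle/determinant-line version needed for Theorem \ref{ori odd}. What the paper's route buys: an explicit realization of $\ker(\slashed{D}^{-}_{A^{*}})$ as the conjugate of $\ker(\slashed{D}^{+}_{A})$ via $E^{*}\cong\overline{E}$, which replaces the Hodge decomposition by finite-dimensional linear algebra at the final step. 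The two bookkeeping points you flag --- the sign $\epsilon$ in moving Clifford multiplication across $B$, and the fact that the connection induced on $E^{*}$ by the duality pairing is exactly $A^{*}$ --- are genuine but routine, and they are precisely what the paper's auxiliary lemmas verify in the Hermitian setting.
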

As a direct corollary of Theorem \ref{thm 8k+2}, we have
\begin{corollary}\label{cor}
Let $X$ be a compact spin manifold with $dim_{\mathbb{R}}(X)=8k+2$ or $8k+6$, and $(E,h)$ be a (Hermitian) complex vector bundle
with an unitary connection $A$. Then for Dirac operator
\begin{equation}\slashed{D}^{+}_{A\oplus A^{*}}: \Gamma(\slashed{S}^{+}_{\mathbb{C}}(X)\otimes T^{*}E)
\rightarrow\Gamma(\slashed{S}^{-}_{\mathbb{C}}(X)\otimes T^{*}E), \nonumber \end{equation}
we have a canonical isomorphism
\begin{equation}
det(Ind(\slashed{D}^{+}_{A\oplus A^{*}}))\cong (det(Ind(\slashed{D}^{+}_{A})))^{\otimes2}
\nonumber \end{equation}
for determinant lines.
\end{corollary}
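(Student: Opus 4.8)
The plan is to exploit the fact that the coupling bundle $T^{*}E = E\oplus E^{*}$ (carrying the connection $A\oplus A^{*}$) splits as a direct sum, so that the twisted Dirac operator splits accordingly as $\slashed{D}^{+}_{A\oplus A^{*}} = \slashed{D}^{+}_{A}\oplus\slashed{D}^{+}_{A^{*}}$. First I would record that the index is additive under such a splitting while the determinant of an index is multiplicative, giving
\[ det(Ind(\slashed{D}^{+}_{A\oplus A^{*}}))\cong det(Ind(\slashed{D}^{+}_{A}))\otimes det(Ind(\slashed{D}^{+}_{A^{*}})). \]
Thus the corollary reduces to producing a canonical isomorphism $det(Ind(\slashed{D}^{+}_{A^{*}}))\cong det(Ind(\slashed{D}^{+}_{A}))$, i.e. to comparing the operator coupled with $E$ with the one coupled with the dual bundle $E^{*}$.

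The heart of the argument is the Clifford-module duality of Theorem \ref{thm 8k+2} (equivalently (\ref{equ 2})). Applying that theorem once to $E$ and once to $E^{*}$ yields the canonical isomorphisms
\[ ker(\slashed{D}^{+}_{A})\cong (ker(\slashed{D}^{-}_{A^{*}}))^{*}, \qquad ker(\slashed{D}^{+}_{A^{*}})\cong (ker(\slashed{D}^{-}_{A}))^{*}, \]
while $coker(\slashed{D}^{+}_{A})=ker(\slashed{D}^{-}_{A})$ since $\slashed{D}^{-}_{A}$ is the formal adjoint of $\slashed{D}^{+}_{A}$. Conceptually, the pair of dualities $\slashed{S}^{+}_{\mathbb{C}}(X)\cong(\slashed{S}^{-}_{\mathbb{C}}(X))^{*}$ and $E\cong(E^{*})^{*}$ identifies $\slashed{D}^{+}_{A^{*}}$ with the transpose of $\slashed{D}^{+}_{A}$, and a transpose interchanges kernel and cokernel up to dualization.

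I would then simply substitute. Writing the determinant line of an index as $det(ker)\otimes det(coker)^{*}$, the isomorphisms above give
\[ det(Ind(\slashed{D}^{+}_{A^{*}})) = det(ker\,\slashed{D}^{+}_{A^{*}})\otimes det(coker\,\slashed{D}^{+}_{A^{*}})^{*}\cong det(ker\,\slashed{D}^{-}_{A})^{*}\otimes det(ker\,\slashed{D}^{+}_{A}), \]
and the right-hand side equals $det(ker\,\slashed{D}^{+}_{A})\otimes det(ker\,\slashed{D}^{-}_{A})^{*}=det(Ind(\slashed{D}^{+}_{A}))$. Combining with the first paragraph yields the desired $det(Ind(\slashed{D}^{+}_{A\oplus A^{*}}))\cong (det(Ind(\slashed{D}^{+}_{A})))^{\otimes2}$.

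The step requiring the most care — and the reason for insisting on canonicity — is the passage to families, since this corollary is invoked for the universal family $\mathbb{V}$ in the proof of Theorem \ref{ori odd}, where the dimensions of $ker$ and $coker$ jump. The resolution is that the isomorphism of Theorem \ref{thm 8k+2} is induced by the fixed Clifford-bundle duality (\ref{equ 2}) together with the tautological pairing of $E$ with $E^{*}$, neither of which depends on the connection $A$; hence it is a duality of the \emph{whole} family of operators and descends canonically to determinant line bundles over $\widetilde{\mathcal{B}}_{E,X}$ after the usual finite-dimensional stabilization that trivializes the cokernels. I would make this independence of $A$ explicit, so that the resulting square root is genuinely canonical rather than merely existing fiberwise.
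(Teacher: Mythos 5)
Your proposal is correct and is essentially the paper's intended argument: the paper presents this statement as a direct corollary of Theorem \ref{thm 8k+2}, obtained precisely by the splitting $T^{*}E=E\oplus E^{*}$, the identification $coker(\slashed{D}^{+}_{A})\cong ker(\slashed{D}^{-}_{A})$ via the formal adjoint, and the two applications of the duality (to $E$ and to $E^{*}$) that you write out. Your closing point about canonicity in families matches the paper's remark that these canonical isomorphisms extend naturally to the corresponding determinant line bundles.
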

\begin{remark}
The above canonical isomorphisms naturally extend to corresponding determinant line bundles.
\end{remark}

\subsection{Some standard material from gauge theory}
In this subsection, we recall some standard material from gauge theory. The main references are the book \cite{dk} by Donaldson and Kronheimer
and a series of papers \cite{d}, \cite{d0}, \cite{d1} by Donaldson.

We fix a compact spin manifold $X$ of even dimension and a Hermitian complex vector bundle $(E,h)\rightarrow X$.
Given an unitary connection $A$ on $E$, we can define the twisted Dirac operator
\begin{equation}\slashed{D}_{A^{*}\otimes A}: \Gamma(\slashed{S}^{+}_{\mathbb{C}}(X)\otimes EndE)\rightarrow
\Gamma(\slashed{S}^{-}_{\mathbb{C}}(X)\otimes EndE) \nonumber \end{equation}
following Theorem 13.10 of \cite{lawson}. $[ker(\slashed{D}_{A^{*}\otimes A})-coker(\slashed{D}_{A^{*}\otimes A})]$ exists as
an element in the $K$-theory $K(pt)$ of one point, and there is a family version of the above construction as follows.

Let $\mathcal{A}$ be the space of all unitary connections on $(E,h)$, and $\mathcal{G}$ be the group of unitary gauge transformations.
If we denote the $U(r)$-principal bundle of $E$ by $P$,
\begin{equation}\mathcal{G}=\Gamma(X,P\times_{U(r)} U(r)) \nonumber \end{equation}
is the space of $C^{\infty}$-sections of the bundle $P\times_{U(r)} U(r)$ (with the conjugate action $U(r)\curvearrowright U(r)$).
The orbit space $\mathcal{B}=\mathcal{A}/\mathcal{G}$ (with suitable Sobolev structure) exists as a metrizable topological space (Lemma 4.2.4 \cite{dk}). Following \cite{dk}, we fix a base point $x_{0}\in X$ and introduce the space
\begin{equation}\widetilde{\mathcal{B}}_{X}=\mathcal{A}\times_{\mathcal{G}}P_{x_{0}}   \nonumber \end{equation}
of equivalent classes of framed connections. Equivalently, $\widetilde{\mathcal{B}}_{X}=\mathcal{A}/\mathcal{G}_{0}$,
where $\mathcal{G}_{0}\vartriangleleft \mathcal{G}$ is the subgroup of gauge transformations which fix the fiber $P_{x_{0}}$.
As $\mathcal{G}_{0}$ acts on $\mathcal{A}$ freely, $\widetilde{\mathcal{B}}_{X}$ (with suitable Sobolev structure)
has a Banach manifold structure whose weak homotopy type will not depend on the chosen Sobolev structures (Proposition 5.1.4 \cite{dk}).
Meanwhile, there exists a universal bundle $\mathcal{E}=\mathcal{A}\times_{\mathcal{G}_{0}}E$ over $\widetilde{\mathcal{B}}_{X}\times X$
(trivialized on $\widetilde{\mathcal{B}}_{X}\times\{x_{0}\}$), which carries a universal family of framed connections.

We then couple the Dirac operator $\slashed{D}$ on $X$ with the universal connection on $\mathcal{E}$ and there is an index bundle
\begin{equation} Ind(\slashed{\mathbb{D}}_{End\mathcal{E}})\in K(\widetilde{\mathcal{B}}_{X}), \nonumber \end{equation}
which satisfies $Ind(\slashed{\mathbb{D}}_{End\mathcal{E}})|_{[A]}=ker(\slashed{D}_{A^{*}\otimes A})-coker(\slashed{D}_{A^{*}\otimes A})$ \cite{as4}
(see also page 181 of \cite{dk}).
%$\mathcal{B}_{X}^{*}$ is an open subset of the orbit space $\mathcal{B}_{X}=\mathcal{A}/\mathcal{G}^{0}$ whose complement is of infinite codimension
For any finite dimensional submanifold $C\subseteq \widetilde{\mathcal{B}}_{X}$ and the induced family $\mathbb{E}=\mathcal{E}|_{C}$ over
$C\times X$, we have the Atiyah-Singer family index formula \cite{as4}:
\begin{equation}ch(Ind(\slashed{\mathbb{D}}_{End\mathbb{E}}))=(ch(End\mathbb{E})\cdot \hat{A}(X))/[X].   \nonumber \end{equation}

\subsection{Seidel-Thomas twists}
In this section, we recall the Seidel-Thomas twist \cite{st} and how it could be used to identify a moduli space of simple sheaves to
a moduli space of simple holomorphic bundles, which is the work of Joyce and Song \cite{js}.
\begin{definition}
Let $(X,\mathcal{O}_{X}(1))$ be a projective Calabi-Yau $m$-fold with $Hol(X)=SU(m)$. For each $n\in \mathbb{Z}$,
the Seidel-Thomas twist $T_{\mathcal{O}_{X}(-n)}$ by $\mathcal{O}_{X}(-n)$ is the Fourier-Mukai transform from $D(X)$ to $D(X)$ with kernel
\begin{equation}K=cone(\mathcal{O}_{X}(n)\boxtimes\mathcal{O}_{X}(-n)\rightarrow \mathcal{O}_{\Delta}).
\nonumber \end{equation}
\end{definition}
In general, $T_{n}\triangleq T_{\mathcal{O}_{X}(-n)}[-1]$ maps sheaves to complexes of sheaves. But for $n\gg 0$, we have
\begin{theorem}\label{seidel thomas twist lemma}(Joyce-Song, Lemma 8.2 of \cite{js})
Let $U$ be a finite type $\mathbb{C}$-scheme and $\mathcal{F}_{U}$ is a coherent sheaf on $U\times X$ flat over $U$ i.e.
it is a $U$-family of coherent sheaves on $X$. Then for $n\gg 0$,
$T_{n}(\mathcal{F}_{U})$ is also a $U$-family of coherent sheaves on $X$.
\end{theorem}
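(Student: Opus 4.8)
The plan is to unwind the definition of $T_n$ into the exact triangle coming from the defining cone $K$, and to show that for $n\gg0$ this triangle degenerates to a short exact sequence of sheaves, realizing $T_n(\mathcal{F}_U)$ as the kernel of a relative evaluation map. First I would record the triangle
\begin{equation}\mathcal{O}_X(n)\boxtimes\mathcal{O}_X(-n)\to\mathcal{O}_\Delta\to K\xrightarrow{+1} \nonumber\end{equation}
on $X\times X$ and apply the associated (relative) Fourier--Mukai transforms to $\mathcal{F}_U$, working over $U$. Writing $q\colon U\times X\to U$ and $p_X\colon U\times X\to X$ for the projections, the transform with kernel $\mathcal{O}_X(n)\boxtimes\mathcal{O}_X(-n)$ sends $\mathcal{F}_U$ to $q^{*}Rq_{*}\big(\mathcal{F}_U(n)\big)\otimes p_X^{*}\mathcal{O}_X(-n)$, while the identity kernel $\mathcal{O}_\Delta$ sends it to $\mathcal{F}_U$. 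Thus $T_{\mathcal{O}_X(-n)}(\mathcal{F}_U)$ fits into the triangle
\begin{equation}q^{*}Rq_{*}\big(\mathcal{F}_U(n)\big)\otimes p_X^{*}\mathcal{O}_X(-n)\xrightarrow{\;\mathrm{ev}\;}\mathcal{F}_U\to T_{\mathcal{O}_X(-n)}(\mathcal{F}_U)\xrightarrow{+1}, \nonumber\end{equation}
with $\mathrm{ev}$ the evaluation map. This is the relativization of the pointwise picture, where $Rq_{*}$ plays the role of $R\Gamma$.

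Next I would invoke the standard ``$n\gg0$'' inputs, all available uniformly because $U$ is of finite type, hence quasi-compact. By relative Serre vanishing the higher direct images $R^{i}q_{*}(\mathcal{F}_U(n))$ vanish for $i>0$ and $n\gg0$, so $Rq_{*}(\mathcal{F}_U(n))=q_{*}(\mathcal{F}_U(n))$ sits in degree $0$; since $\mathcal{F}_U$ is flat over $U$, cohomology and base change then shows $q_{*}(\mathcal{F}_U(n))$ is locally free on $U$ and commutes with base change. For $n\gg0$ the evaluation map is moreover fibrewise surjective by relative global generation, hence surjective as a map of coherent sheaves on $U\times X$ (the cokernel is coherent and vanishes on every fibre, so it vanishes). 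Its kernel $\mathcal{K}_U$ sits in a short exact sequence
\begin{equation}0\to\mathcal{K}_U\to q^{*}q_{*}\big(\mathcal{F}_U(n)\big)\otimes p_X^{*}\mathcal{O}_X(-n)\xrightarrow{\;\mathrm{ev}\;}\mathcal{F}_U\to0, \nonumber\end{equation}
and comparison with the triangle above identifies $T_{\mathcal{O}_X(-n)}(\mathcal{F}_U)\cong\mathcal{K}_U[1]$, so that $T_n(\mathcal{F}_U)=T_{\mathcal{O}_X(-n)}(\mathcal{F}_U)[-1]\cong\mathcal{K}_U$ is a single coherent sheaf in degree $0$.

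It then remains to see that $\mathcal{K}_U$ is flat over $U$, i.e. that it is genuinely a $U$-family. The middle term of the short exact sequence is flat over $U$, being the pullback of the locally free $\mathcal{O}_U$-module $q_{*}(\mathcal{F}_U(n))$ twisted by the line bundle $p_X^{*}\mathcal{O}_X(-n)$, and $\mathcal{F}_U$ is flat by hypothesis. The long exact $\mathrm{Tor}^{\mathcal{O}_U}_{\bullet}(-,N)$ sequence associated to the short exact sequence, applied with both outer flank terms flat, forces $\mathrm{Tor}^{\mathcal{O}_U}_{i}(\mathcal{K}_U,N)=0$ for all $i\geq1$ and all $\mathcal{O}_U$-modules $N$ by dimension shifting; hence $\mathcal{K}_U\cong T_n(\mathcal{F}_U)$ is flat over $U$, as desired.

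The main obstacle is the bookkeeping of uniformity: one must select a single $n$ for which relative Serre vanishing, local freeness of $q_{*}(\mathcal{F}_U(n))$, and fibrewise global generation (surjectivity of $\mathrm{ev}$) all hold simultaneously over the whole finite-type base $U$. Quasi-compactness of $U$ makes this possible, and it is precisely the point where one uses that $\mathcal{F}_U$ is a bounded flat family rather than a single sheaf. Once this uniform $n$ is fixed, the flatness of the kernel and the identification of the shifted cone with a sheaf are formal.
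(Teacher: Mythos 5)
The paper itself gives no proof of this statement: it is quoted verbatim as Lemma 8.2 of Joyce--Song \cite{js}, so there is no internal argument to compare against. Your proposal is correct and is essentially the original Joyce--Song argument: you unwind the cone kernel into the evaluation triangle
$q^{*}Rq_{*}(\mathcal{F}_U(n))\otimes p_X^{*}\mathcal{O}_X(-n)\to\mathcal{F}_U\to T_{\mathcal{O}_X(-n)}(\mathcal{F}_U)$,
use relative Serre vanishing, cohomology and base change, and relative global generation (all valid uniformly in $n\gg0$ for a projective morphism over a Noetherian base, which is where finite-typeness of $U$ enters) to identify $T_n(\mathcal{F}_U)$ with the kernel of the surjective evaluation map, and then deduce $U$-flatness of that kernel from the Tor long exact sequence with flat outer terms. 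The one genuine subtlety---choosing a single $n$ making all three inputs hold simultaneously over all of $U$---is exactly the point you flag, and your handling of it is correct.
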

Sufficiently many compositions of Seidel-Thomas twists map sheaves to vector bundles.
\begin{definition}
For a nonzero coherent sheaf $\mathcal{F}$, the homological dimension $hd(\mathcal{F})$ is the smallest $n\geq0$ for
which there exists an exact sequence in the abelian category $coh(X)$ of coherent sheaves
\begin{equation}0\rightarrow E_{n}\rightarrow E_{n-1}\cdot\cdot\cdot\rightarrow E_{0}\rightarrow \mathcal{F}\rightarrow 0
\nonumber \end{equation}
with $\{E_{i}\}_{i=0,...,n}$ are vector bundles.
\end{definition}
\begin{theorem}(Joyce-Song, Lemma 8.4 of \cite{js})
Let $\mathcal{F}_{U}$, $n\gg0$ be the same as in Theorem \ref{seidel thomas twist lemma}, then for any $u\in U$,
we have $hd(T_{n}(\mathcal{F}_{u}))=max(hd(\mathcal{F}_{u})-1,0)$.
\end{theorem}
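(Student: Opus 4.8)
The plan is to unwind the definition of $T_{n}$ applied to a single sheaf, identify $T_{n}(\mathcal{F})$ with the first syzygy of $\mathcal{F}$, and then reduce the statement to the standard syzygy formula for projective dimension over the regular local rings $\mathcal{O}_{X,x}$. Since $T_{n}(\mathcal{F}_{U})$ is flat over $U$ for $n\gg0$ by Theorem \ref{seidel thomas twist lemma} and (again for $n\gg0$) the twist commutes with restriction to a fibre by cohomology-and-base-change, it suffices to establish the pointwise identity $hd(T_{n}(\mathcal{F}))=\max(hd(\mathcal{F})-1,0)$ for a single coherent sheaf $\mathcal{F}=\mathcal{F}_{u}$ on $X$, with $n\gg0$ chosen uniformly in $u$.

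First I would apply the Fourier--Mukai transform to the defining exact triangle $\mathcal{O}_{X}(n)\boxtimes\mathcal{O}_{X}(-n)\rightarrow\mathcal{O}_{\Delta}\rightarrow K$. Using $\Phi_{\mathcal{O}_{\Delta}}(\mathcal{F})=\mathcal{F}$ together with the projection formula $\Phi_{\mathcal{O}_{X}(n)\boxtimes\mathcal{O}_{X}(-n)}(\mathcal{F})=R\Gamma(X,\mathcal{F}(n))\otimes\mathcal{O}_{X}(-n)$, the twist $T_{\mathcal{O}_{X}(-n)}(\mathcal{F})$ sits in the exact triangle
\begin{equation}R\Gamma(X,\mathcal{F}(n))\otimes\mathcal{O}_{X}(-n)\xrightarrow{ev}\mathcal{F}\rightarrow T_{\mathcal{O}_{X}(-n)}(\mathcal{F}). \nonumber\end{equation}
For $n\gg0$ Serre vanishing forces $R\Gamma(X,\mathcal{F}(n))=H^{0}(X,\mathcal{F}(n))$ in degree $0$, and global generation of $\mathcal{F}(n)$ makes $ev$ surjective; hence the cone of $ev$ is the shifted kernel, and after the shift $T_{n}=T_{\mathcal{O}_{X}(-n)}[-1]$ one identifies $T_{n}(\mathcal{F})$ with the honest coherent sheaf fitting into the short exact sequence
\begin{equation}0\rightarrow T_{n}(\mathcal{F})\rightarrow H^{0}(X,\mathcal{F}(n))\otimes\mathcal{O}_{X}(-n)\rightarrow\mathcal{F}\rightarrow0, \nonumber\end{equation}
whose middle term is a vector bundle. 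This is consistent with Theorem \ref{seidel thomas twist lemma} and exhibits $T_{n}(\mathcal{F})$ as precisely the first syzygy of $\mathcal{F}$ with respect to the locally free sheaf $\mathcal{O}_{X}(-n)^{\oplus h^{0}(\mathcal{F}(n))}$.

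With this description the remaining argument is commutative algebra. Localising the sequence at a point $x\in X$ gives $0\rightarrow T_{n}(\mathcal{F})_{x}\rightarrow F_{x}\rightarrow\mathcal{F}_{x}\rightarrow0$ with $F_{x}$ a free $\mathcal{O}_{X,x}$-module, so the syzygy relation yields $\mathrm{pd}_{\mathcal{O}_{X,x}}(T_{n}(\mathcal{F})_{x})=\max(\mathrm{pd}_{\mathcal{O}_{X,x}}(\mathcal{F}_{x})-1,0)$: if $\mathcal{F}_{x}$ is free the sequence splits and the kernel is free, and otherwise the projective dimension drops by exactly one. Finally I would invoke that on the smooth projective $X$ the homological dimension equals the supremum of local projective dimensions, $hd(\mathcal{G})=\max_{x}\mathrm{pd}_{\mathcal{O}_{X,x}}(\mathcal{G}_{x})$ (finite since the $\mathcal{O}_{X,x}$ are regular, and realised globally by iterating the same syzygy construction). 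Taking the maximum over $x$ and using that $t\mapsto\max(t-1,0)$ is monotone gives $hd(T_{n}(\mathcal{F}))=\max(hd(\mathcal{F})-1,0)$.

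The main obstacle is the second paragraph: computing the Fourier--Mukai transform of the kernel $K$ correctly, tracking the cohomological shift in $T_{n}=T_{\mathcal{O}_{X}(-n)}[-1]$, and checking that for $n\gg0$ (uniformly in $u$, using flatness of $\mathcal{F}_{U}$) the evaluation map is surjective while the higher cohomology of $\mathcal{F}(n)$ vanishes, so that the twist is genuinely the first-syzygy sheaf rather than a complex. Once the short exact sequence is in hand, the projective-dimension bookkeeping and the passage from local to global homological dimension are routine.
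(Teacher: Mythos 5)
Your proof is correct: the identification of $T_{n}(\mathcal{F})$ with the first syzygy sheaf via the evaluation sequence $0\rightarrow T_{n}(\mathcal{F})\rightarrow H^{0}(X,\mathcal{F}(n))\otimes\mathcal{O}_{X}(-n)\rightarrow\mathcal{F}\rightarrow0$ (valid for $n\gg0$ by Serre vanishing and global generation, uniformly over the finite type base $U$), followed by dimension shifting over the regular local rings $\mathcal{O}_{X,x}$ and the identity $hd(\mathcal{G})=\max_{x}\mathrm{pd}_{\mathcal{O}_{X,x}}(\mathcal{G}_{x})$, is exactly the right argument. Note that the paper itself offers no proof to compare against --- the statement is quoted verbatim from Joyce--Song, Lemma 8.4 of \cite{js}, in the appendix --- and your reconstruction is essentially the original Joyce--Song argument, which derives the same short exact sequence from the Fourier--Mukai triangle and concludes by the same syzygy/dimension-shifting bookkeeping.
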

\begin{corollary}(Joyce-Song, Corollary 8.5 of \cite{js})
Let $U$ be a finite type $\mathbb{C}$-scheme and $\mathcal{F}_{U}$ is a $U$-family of coherent sheaves on $X$.
Then there exists $n_{1},...n_{m}\gg0$ such that for $T_{n_{m}}\circ\cdot\cdot\cdot\circ T_{n_{1}}(\mathcal{F}_{U})$
is a $U$-family of vector bundles on $X$.
\end{corollary}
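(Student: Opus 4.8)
The plan is to iterate the two preceding Joyce--Song results, exploiting the fact that a Seidel--Thomas twist lowers homological dimension by one. First I would record the uniform bound $hd(\mathcal{F}_u)\le m$ for every $u\in U$: since $X$ is smooth of dimension $m$, each local ring $\mathcal{O}_{X,x}$ is regular of global dimension $m$, so the stalk $\mathcal{F}_{u,x}$ has projective dimension at most $m$, and hence every fiber $\mathcal{F}_u$ admits a locally free resolution of length at most $m$. Thus the fiberwise homological dimensions of the family are uniformly bounded by $m$, which is exactly why the corollary needs only $m$ twists.

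Next I would set up an induction on this bound. Choose $n_{1}\gg0$ large enough that Theorem \ref{seidel thomas twist lemma} applies, so that $\mathcal{G}^{(1)}_{U}:=T_{n_{1}}(\mathcal{F}_{U})$ is again a $U$-family of coherent sheaves, and simultaneously large enough that Lemma 8.4 of \cite{js} gives $hd(T_{n_{1}}(\mathcal{F}_{u}))=\max(hd(\mathcal{F}_{u})-1,0)$ for all $u$. Then $\mathcal{G}^{(1)}_{U}$ has fiberwise homological dimension at most $m-1$. Since $\mathcal{G}^{(1)}_{U}$ is itself a finite-type $U$-family, the same two results apply to it: choosing $n_{2}\gg0$ produces a $U$-family $\mathcal{G}^{(2)}_{U}=T_{n_{2}}(\mathcal{G}^{(1)}_{U})$ with fiberwise homological dimension at most $m-2$. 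Repeating, after $k$ steps $\mathcal{G}^{(k)}_{U}=T_{n_{k}}\circ\cdots\circ T_{n_{1}}(\mathcal{F}_{U})$ has fiberwise homological dimension at most $m-k$.

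After $m$ iterations the fiberwise homological dimension is $\le0$, which means each $T_{n_{m}}\circ\cdots\circ T_{n_{1}}(\mathcal{F}_{u})$ is locally free, i.e. a vector bundle (and note $\max(0-1,0)=0$, so extra twists do no harm once a bundle is reached). Combined with flatness over $U$, inherited at each stage from Theorem \ref{seidel thomas twist lemma}, fiberwise local freeness upgrades to local freeness of the whole family, so $T_{n_{m}}\circ\cdots\circ T_{n_{1}}(\mathcal{F}_{U})$ is a $U$-family of vector bundles on $X$, as claimed.

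The hard part will be the uniformity of the threshold ``$n\gg0$'' across the family: at each stage one needs a single $n_{k}$ that works simultaneously for all $u\in U$ in both cited results. This is precisely what the finite-type hypothesis on $U$ (together with boundedness of the family) secures, and it is already built into the statements of Theorem \ref{seidel thomas twist lemma} and Lemma 8.4 of \cite{js}; the only bookkeeping is to verify that each intermediate family $\mathcal{G}^{(k)}_{U}$ remains a finite-type $U$-family, so that those uniform thresholds may be reinvoked at every step of the induction.
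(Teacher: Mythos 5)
Your proposal is correct and is essentially the argument behind the cited result (Corollary 8.5 of \cite{js}), which the paper quotes without reproof: bound the fiberwise homological dimension by $\dim X=m$ using smoothness of $X$, then apply the two preceding Joyce--Song results inductively $m$ times, with the finite-type hypothesis on $U$ supplying a uniform threshold $n_{k}\gg0$ at each stage. Your closing step, upgrading $U$-flatness plus fiberwise local freeness to local freeness of the whole family, is also the standard way this conclusion is drawn.
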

Meanwhile, Seidel-Thomas twists are auto-equivalences of derived category $D(X)$, they preserve determinant line bundles (if exists)
of corresponding moduli spaces.
\begin{corollary}(Joyce-Song \cite{js})\label{st preserves L}
Given a coarse moduli space $\mathcal{M}_{X}$ of simple sheaves with fixed Chern classes, we choose sufficiently large integers
$n_{1},...n_{m}\gg0$ such that $\Psi\triangleq T_{n_{m}}\circ\cdot\cdot\cdot\circ T_{n_{1}}$ identifies $\mathcal{M}_{X}$
with a coarse moduli space $\mathfrak{M}^{bdl}_{X}$ of simple holomorphic bundles. Then
\begin{equation}\Psi^{*}\mathcal{L}_{\mathfrak{M}^{bdl}_{X}}\cong\mathcal{L}_{\mathcal{M}_{X}}, \nonumber \end{equation}
where $\mathcal{L}_{\bullet}$ is the determinant line bundle of the corresponding moduli space.

Moreover, if $X$ is a $CY_{2n}$, $\mathcal{L}_{\mathfrak{M}^{bdl}_{X}}$ and $\mathcal{L}_{\mathcal{M}_{X}}$ are endowed with
non-degenerate quadratic forms from Serre duality pairing. The isomorphism $\Psi^{*}\mathcal{L}_{\mathfrak{M}^{bdl}_{X}}
\cong\mathcal{L}_{\mathcal{M}_{X}}$ also preserves
the quadratic forms.
\end{corollary}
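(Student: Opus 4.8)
The plan is to leverage the single structural fact that each Seidel--Thomas twist $T_n$ is an exact auto-equivalence of the bounded derived category $D(X)$, so that the composition $\Psi=T_{n_m}\circ\cdots\circ T_{n_1}$ is again an auto-equivalence. Any exact equivalence commutes with the shift functor and is fully faithful, so for all objects $A,B\in D(X)$ it supplies canonical isomorphisms $Ext^{i}(A,B)\cong Ext^{i}(\Psi A,\Psi B)$ for every $i$. Taking $A=B=\mathcal{F}$ already identifies the defining fibre $det(Ext^{odd}(\mathcal{F},\mathcal{F}))\otimes det(Ext^{even}(\mathcal{F},\mathcal{F}))^{-1}$ of $\mathcal{L}_{\mathcal{M}_{X}}$ with the corresponding fibre of $\mathcal{L}_{\mathfrak{M}^{bdl}_{X}}$ over $\Psi(\mathcal{F})$. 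The substance of the proof is to promote this fibrewise comparison to an isomorphism of line bundles over the entire moduli space.

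First I would realise $\Psi$ as a relative Fourier--Mukai transform. Its kernel in $D(X\times X)$ is the convolution of the kernels of the individual twists, so for any base $S$ it extends to an $\mathcal{O}_{S}$-linear auto-equivalence $\Psi_{S}$ of $D(X\times S)$ acting fibrewise over $S$. Taking $S=\mathcal{M}_{X}$ and letting $\mathcal{E}\to X\times\mathcal{M}_{X}$ be the universal sheaf, Theorem \ref{seidel thomas twist lemma} and the subsequent corollaries guarantee that $\Psi_{\mathcal{M}_{X}}(\mathcal{E})$ is a flat family of simple bundles, hence the universal family of $\mathfrak{M}^{bdl}_{X}$ pulled back along the identification $\Psi:\mathcal{M}_{X}\cong\mathfrak{M}^{bdl}_{X}$. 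The key relative statement is that a Fourier--Mukai equivalence preserves the relative derived endomorphism complex, i.e. there is a canonical isomorphism
\begin{equation}R\pi_{*}R\mathcal{H}om(\mathcal{E},\mathcal{E})\cong R\pi_{*}R\mathcal{H}om(\Psi_{\mathcal{M}_{X}}\mathcal{E},\Psi_{\mathcal{M}_{X}}\mathcal{E}) \nonumber \end{equation}
in $D(\mathcal{M}_{X})$, where $\pi$ denotes the projection to $\mathcal{M}_{X}$. Passing to determinants of these perfect complexes then yields $\mathcal{L}_{\mathcal{M}_{X}}\cong\Psi^{*}\mathcal{L}_{\mathfrak{M}^{bdl}_{X}}$, which is the first assertion.

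For the second assertion I would invoke the compatibility of $\Psi$ with Serre duality. On a Calabi--Yau $2n$-fold the Serre functor is the shift $[2n]$, and any auto-equivalence of $D(X)$ intertwines the Serre functor up to a canonical natural isomorphism. Consequently the Serre pairing $Ext^{i}(\mathcal{F},\mathcal{F})\times Ext^{2n-i}(\mathcal{F},\mathcal{F})\to\mathbb{C}$ is carried by $\Psi$ to the Serre pairing on $\Psi(\mathcal{F})$, and this persists in families. Therefore the quadratic form $Q_{Serre}$ on $\mathcal{L}_{\mathcal{M}_{X}}$ is matched with the one on $\mathcal{L}_{\mathfrak{M}^{bdl}_{X}}$ under the isomorphism constructed above.

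I expect the \emph{main obstacle} to be the family-level bookkeeping rather than any pointwise statement. The isomorphisms of $Ext$-groups are formal once $\Psi$ is an equivalence; the care lies in checking that $\Psi_{S}$ is genuinely $\mathcal{O}_{S}$-linear and functorial in $S$ (so the comparison descends correctly through the universal family and does not merely hold fibre by fibre), and, crucially, that the canonical Serre intertwiner is \emph{the same} isomorphism realising $\mathcal{L}_{\mathcal{M}_{X}}\cong\Psi^{*}\mathcal{L}_{\mathfrak{M}^{bdl}_{X}}$. In other words, the delicate point is that the naturality of Serre duality with respect to $\Psi$ must be compatible on the nose with the trivialisations defining the two copies of $Q_{Serre}$, rather than only up to an undetermined scalar.
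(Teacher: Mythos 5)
Your proposal is correct and takes essentially the same route as the paper, which states this corollary as a result of Joyce--Song and justifies it only by the observation that Seidel--Thomas twists are auto-equivalences of $D(X)$ and therefore preserve Ext-groups, determinant line bundles, and the Serre duality pairing. Your family-level elaboration via relative Fourier--Mukai kernels and determinants of the relative endomorphism complex $R\pi_{*}R\mathcal{H}om(\mathcal{E},\mathcal{E})$ is precisely the bookkeeping left implicit in the paper's citation of \cite{js}.
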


%Address: The Institute of Mathematical Sciences and Department of Mathematics, The Chinese University of Hong Kong, Shatin, Hong Kong \\
%Email: ylcao@math.cuhk.edu.hk ; leung@math.cuhk.edu.hk

\end{document}